\setlist[enumerate]{itemsep=1ex,leftmargin=2em, label={\rm(\roman*)}}
\setlist[itemize]{itemsep=1ex,leftmargin=2em}
\def\equationautorefname~#1\null{Equation~(#1)\null}
\def\sectionautorefname{\S\@gobble}
\def\subsectionautorefname{\S\@gobble}
\DeclareMathOperator{\Li}{Li}
\newcommand{\Q}{\mathbb{Q}}
\newcommand{\Z}{\mathbb{Z}}
\newcommand{\R}{\mathbb{R}}
\renewcommand{\L}{\mathcal{L}}
\DeclareMathOperator{\ch}{char}
\newcommand{\lra}{\longrightarrow}
\renewcommand{\Im}{\operatorname{im}}
\DeclareMathOperator{\Conf}{Conf}
\DeclareMathOperator{\Ext}{Ext}
\newcommand{\QHod}{\text{$\Q$-$\mathrm{Hod}$}}
\newcommand{\RHod}{\text{$\R$-$\mathrm{Hod}$}}
\declaretheorem[
style=plain,
name=Theorem,
numbered=yes,
refname={Theorem,Theorems},
Refname={Theorem,Theorems}
]{theorem}
\declaretheorem[
style=plain,
name=Proposition,
numberlike=theorem,
refname={Proposition,Propositions},
Refname={Proposition,Propositions}
]{proposition}
\declaretheorem[
style=definition,
name=Remark,
numberlike=theorem,
refname={Remark,Remarks},
Refname={Remark,Remarks}
]{remark}
\declaretheorem[
style=plain,
name=Conjecture,
numberlike=theorem,
refname={Conjecture,Conjectures},
Refname={Conjecture,Conjectures}
]{conjecture}
\declaretheorem[
style=definition,
name=Definition,
numberlike=theorem,
refname={Definition,Definitions},
Refname={Definition,Definitions}
]{definition}
\declaretheorem[
style=plain,
name=Corollary,
numberlike=theorem,
refname={Corollary,Corollaries},
Refname={Corollary,Corollaries}
]{corollary}
\declaretheorem[
style=plain,
name=Lemma,
numberlike=theorem,
refname={Lemma,Lemmas},
Refname={Lemma,Lemmas }
]{lemma}
\declaretheorem[
style=plain,
name=Question,
numberlike=theorem,
refname={Question,Questions},
Refname={Question,Questions}
]{question}
\newcommand{\LiL}{\Li^{\L}}
\newcommand{\IL}{\operatorname{I}^{\L}}
\let\phi\varphi
\DeclareMathOperator{\M}{\mathcal{M}}
\DeclareMathOperator{\I}{I}
\DeclareMathOperator{\id}{id}
\DeclareMathOperator{\Id}{Id}
\DeclareMathOperator{\gr}{gr}
\newcommand{\CoLie}{\operatorname{Lie}^c}
\def\semi{\,{\boldsymbol{;}}\,}
\def\={\mathrel{\,{=}\,}}
\def\+{\!+\!}
\def\-{\!-\!}
\DeclareMathOperator{\cor}{Cor}
\newcommand{\dd}{\mathrm{d}}
\DeclareMathOperator{\Sp}{Sp}
\newcommand{\refAall}{\ref{Acycle}--\ref{Aone}}
\renewcommand{\refAall}{\ref{Acycle}--\ref{Aone}}
\newcommand{\llp}{(\mkern-4mu(}
\newcommand{\rrp}{)\mkern-4mu)}
\title{The Hopf algebra of formal multiple polylogarithms}
\author[Charlton]{Steven Charlton}
\address[Charlton]{Max Planck Institute for Mathematics, Vivatsgasse 7, 53111 Bonn, Germany}
\email{charlton@mpim-bonn.mpg.de}
\author[Matveiakin]{Andrei Matveiakin}
\address[Matveiakin]{}
\email{a.matveiakin@gmail.com}
\author[Radchenko]{Danylo Radchenko}
\address[Radchenko]{Laboratoire Paul Painlev\'e, Universit\'e de Lille, F-59655 Villeneuve d'Ascq,	France}
\email{danradchenko@gmail.com}
\author[Rudenko]{Daniil Rudenko}
\address[Rudenko]{Department of Mathematics, University of Chicago, 5801 S Ellis Ave, 60637 Chicago, IL, USA}
\email{rudenkodaniil@gmail.com}
\begin{document}

\begin{abstract}
We define a Hopf algebra of polylogarithms of an arbitrary field, which is a candidate for a conjectural Hopf algebra of framed mixed Tate motives. Our definition is elementary and mimics Goncharov's construction of higher Bloch groups. We also discuss the Hodge and motivic realizations of the Hopf algebra of polylogarithms.
\end{abstract}

\maketitle

\tableofcontents

\section{Introduction}
\subsection{Multiple polylogarithms and Goncharov's program}
Multiple polylogarithms are multivalued functions of variables
$x_1,\dots,x_k\in \mathbb{C}$ 
depending on positive integer parameters $n_1,\dots,n_k\in \mathbb{N}$. In the polydisc   $|x_1|,|x_2|,\dots, |x_k| <1$ the multiple polylogarithm \( \Li_{n_1, \ldots, n_k} \) is  defined by the power series 
\begin{equation}\label{FormulaPolylogarithm}
\Li_{n_1,n_2,\dots, n_k}(x_1,x_2,\dots,x_k)=\sum_{0<m_1<m_2<\dots<m_k}\frac{x_1^{m_1} x_2^{m_2}\dots x_k^{m_k}}{m_1^{n_1}m_2^{n_2}\dots m_k^{n_k}}\,.
\end{equation}
The number \( n = n_1 + \cdots + n_k \) is called the \emph{weight} of the function $\Li_{n_1,\dots,n_k}$ and \( k \) is its \emph{depth}.

Multiple polylogarithms satisfy numerous functional equations, like the following {\it reflection} equation for dilogarithm discovered by Euler:
\begin{equation}\label{FormulaEulerEquation}
\Li_2(a)+\Li_2(1-a)=\frac{\pi^2}{6}-\log(a) \log(1-a) \,, \quad \text{for $0<a<1$.}
\end{equation}
Working with identities between multivalued functions can be rather tedious: even the simple equation $\log(a_1)+\log(a_2)=\log(a_1a_2)$  only holds modulo integer multiples of $2\pi i$. In general, it is known that any two branches of a multiple polylogarithm of weight $n$ differ by a product of $2\pi i$ with a $\Q$-linear combination of multiple polylogarithms of weight $n-1$, see \cite[\S2.5, \S2.6]{zhaoBook} for further details.  So, modulo multiples of $2\pi i$ (more precisely, modulo $\Q$-linear combinations of multiple polylogarithms of weight $n-1$ multiplied by $2\pi i $),~\eqref{FormulaEulerEquation} simplifies to 
\[
\Li_2(a)+\Li_2(1-a) \equiv -\log(a) \log(1-a) \pmod{2\pi i}
\]
and holds for all $a\in \mathbb{C}\setminus\{0,1\}$.

 Recall the connection between multiple polylogarithms, mixed Tate motives, and the algebraic $K$-theory of fields (for details see \cite{BD94}, \cite{Gon95B}, \cite{GonICM},\cite{Cle21}). Beilinson and Deligne conjectured that for every field $F$ there exists an abelian category of mixed Tate motives. Assuming this category exists, it would be equivalent to the category of graded comodules over a certain graded connected commutative Hopf algebra $\mathcal{H}^{\M}(F)$.  Then, for any  $a_1,\dots,a_k\in F$ and $n_1,\dots,n_k\in \mathbb{N}$, $n=n_1+\dots+n_k$ there would exist elements 
\[
\Li_{n_1,n_2,\dots, n_k}^{\M}(a_1,a_2,\dots,a_k)\in \mathcal{H}_n^{\M}(F)
\]
called {\it motivic multiple polylogarithms} whose properties reflect the properties of \eqref{FormulaPolylogarithm}. In particular, motivic multiple polylogarithms would satisfy certain relations which hold for polylogarithms as analytic functions modulo multiples of $2\pi i$. Goncharov~\cite[Conjectures 1.9 and 7.4]{Gon01} conjectured that motivic multiple polylogarithms span $\mathcal{H}_n^{\M}(F)$ as a $\Q$-vector space and gave a candidate description of all relations in weight three. For $n\geq 1$, denote by $H^{i}(\mathcal{H}^{\M}(F),\Q)_n$  the $i$-th cohomology group of the $n$-th graded component of the cobar complex of the Hopf algebra $\mathcal{H}^{\M}(F)$, given by
\[
0 \lra \mathcal{H}^{\M}_{n}(F)\lra \bigoplus_{\substack{n_1+n_2=n \\ n_1,n_2\geq 1}}\mathcal{H}^{\M}_{n_1}(F)\otimes \mathcal{H}^{\M}_{n_2}(F) \lra \cdots 
\,, \]
with \( \mathcal{H}^{\M}_n(F) \) placed in cohomological degree 1. The properties of mixed Tate motives would imply that
\begin{equation}\label{FormulaPolylogsKtheory}
H^{i}(\mathcal{H}^{\M}(F),\Q)_n \cong \gr_{\gamma}^{n} K_{2n-i}(F)_{\Q}\,,
\end{equation}
where $\gamma$ denotes the $\gamma$-filtration on the algebraic $K$-groups \cite{Sou85} and for any abelian group $A$, we denote by $A_\Q$ its rationalization $\Q\otimes_{\Z}A$.

The existence of the category of mixed Tate motives is known only for some classes of fields, but in particular it is known for number fields \cite{Lev93}, \cite{DG05}. Our goal is to give an elementary construction of a Hopf algebra $\mathcal{H}^\mathrm{f}(F)$ for an arbitrary field $F$ which we call a Hopf algebra of formal multiple polylogarithms. We expect it to be isomorphic to $\mathcal{H}^\mathcal{M}(F)$ when the latter exists. For number fields, we construct a {\it motivic realization}: a Hopf algebra morphism from $\mathcal{H}^\mathrm{f}(F)$ to $\mathcal{H}^\mathcal{M}(F)$. In the case when $F$ is a subfield of $\mathbb{C}$ we construct a {\it Hodge realization}: a Hopf algebra morphism from $\mathcal{H}^\mathrm{f}(F)$ to the Hopf algebra of framed mixed Hodge--Tate structures.

We will see that $\mathcal{H}^\mathrm{f}_{1}(F)$ is isomorphic to $F^{\times}_{\Q}$ and  $\mathcal{H}^\mathrm{f}_{2}(F)$ is closely related to the rationalization of the Bloch group $B_2(F)$ (see~\autoref{sec:wt2wt3} for a precise definition). We show how to prove identities in $\mathcal{H}^\mathrm{f}(F)$, and establish several well-known functional equations for multiple polylogarithms do hold in this setting. Thanks to realization homomorphisms, identities in $\mathcal{H}^\mathrm{f}(F)$ imply corresponding identities between framed mixed Hodge--Tate structures and framed mixed Tate motives. 

Our definition of $\mathcal{H}^\mathrm{f}(F)$ mimics Goncharov's construction of higher Bloch groups in \cite{Gon95B}, (see \autoref{sec:gonconj} for a comparison; a construction related to Goncharov's was also given by Zagier in~\cite[\S8]{zagier}). Two constructions similar to ours already appear in the literature: \cite[\S3]{GoncharovMSRI} and \cite{GKLZ}. The main difference is in the allowed functional equations and in the treatment of specializations. In particular, we only allow ``rational'' functional equations, but we define specializations without any general position assumptions. This allows us to construct Hodge and motivic realizations, which are crucial for applications.

\subsection{Outline of the paper}
The main goal of the paper is to define the Hopf algebra $\mathcal{H}^\mathrm{f}(F)$ for an arbitrary field $F$. By Milnor-Moore, a graded connected commutative Hopf algebra can be reconstructed from its Lie coalgebra of indecomposables as the universal coenveloping coalgebra. So, it is sufficient to define a Lie coalgebra $\mathcal{L}^\mathrm{f}(F)$, which we do in \autoref{SectionAn}--\ref{SectionRelations}; the main steps of the construction are as follows.

For motivation, we return to the conjectural motivic setting. There is another family of elements in $\mathcal{H}^{\M}(F)$ called {\it motivic iterated integrals}.  Motivic iterated integrals were introduced in \cite{Gon01} and are motivic counterparts of the following integrals:
    \[ \I(x_0; x_1, \ldots, x_n; x_{n+1}) =
    \int_{x_0 < t_1 < \cdots < t_N < x_{N+1}}
    \frac{\dd t_1}{t_1-x_1}
    \wedge \frac{\dd t_2}{t_2-x_2}
    \wedge \cdots \wedge
    \frac{\dd t_n}{t_n-x_n}.
    \]
Motivic multiple polylogarithms and motivic iterated integrals are related by the {\it Leibniz-Kontsevich formula} (\cite[Thm. 2.2]{Gon01}, \cite[p. 2]{Goncharov_Manin_2004}),
\begin{equation} \label{EquationLeibnizKontsevich}
	\Li^{\M}_{n_1,n_{2},\ldots,n_k}(a_1,a_2,\dots,a_k)
 = (-1)^{k}\I^{\M}(0;1,\underbrace{0,\dots,0,a_1}_{n_1},\underbrace{0,\dots,0,a_1a_2}_{n_2},\dots,\underbrace{0,\dots,0;a_1a_2\dots a_{k}}_{n_k})\,.
\end{equation}
In particular, they span the same subspace of $\mathcal{H}^{\M}(F)$.
Next, let $\mathcal{L}^{\M}(F)$ be the Lie coalgebra of indecomposables of $\mathcal{H}^{\M}(F)$. In \cite{Gon19} Goncharov introduced {\it motivic correlators} 
\[
\cor^{\M}(x_0,\dots, x_n) \in \mathcal{L}^{\M}(F)
\]
which are framed mixed Tate motives associated to the pro-unipotent completion of the fundamental group of the punctured projective line.
These elements are related to motivic iterated integrals:
\begin{equation} \label{FormulaIntegralsCorrelators}
\IL(x_0; x_1, \ldots, x_n; x_{n+1}) = 
\cor^{\M}(x_1,\ldots, x_n, x_{n+1}) - \cor^{\M}(x_0, x_1,\ldots, x_n).
\end{equation}
All three families of elements have their own advantages and role in the Goncharov program. We will eventually introduce all three families of elements in  $\mathcal{L}^\mathrm{f}(F)$ but it is convenient to use correlators in the definition and define iterated integrals and multiple polylogarithms in terms of them.

We now outline the construction of $\mathcal{L}^\mathrm{f}(F)$. For a finite field $F$ we declare $\mathcal{L}^\mathrm{f}_n(F)=0$, as is consistent with the motivic picture. Henceforth we assume that the field $F$ is infinite. Our first step is the definition of a Lie coalgebra $\mathcal{A}(F)$ in \autoref{SectionAn} generated as a rational vector space by symbols $\llp x_0,x_1,\dots,x_n \rrp$ for $x_i\in F$ subject to some simple relations which are known to hold for motivic correlators. In \autoref{SectionSpecializationAn} we discuss the specialization map on $\mathcal{A}(F)$, which is the algebraic counterpart of a limit for continuous functions. The Lie coalgebra $\mathcal{L}^\mathrm{f}(F)$ will be a quotient $\mathcal{A}(F)/\mathcal{R}(F)$ by a certain graded Lie coideal, and symbols $\llp x_0,x_1,\dots,  x_n \rrp$ project to elements $\cor(x_0,\dots, x_n)\in \mathcal{L}^\mathrm{f}_n(F)$ called correlators. 
 
The next step  in \autoref{SectionRelations} is to define the space of relations $\mathcal{R}_{n}(F)\subseteq \mathcal{A}_{n}(F)$, which we do inductively. Consider an element $R\in \mathcal{A}_{n}(F(t))$ such that the projection of $\delta(R)$ to $\bigwedge^2 \mathcal{L}^\mathrm{f}(F(t))$ vanishes, where $\mathcal{L}^\mathrm{f}_{k}(F')$ for $k<n$ and all fields \( F' \), is already defined by the induction hypothesis. An element $R$ defines a class in $H^{1}(\mathcal{L}^\mathrm{f}(F(t)),\Q)_n$ which we conjecture to be isomorphic to $H^{1}(\mathcal{L}^{\M}(F(t)),\Q)_n\cong \gr_{\gamma}^{n} K_{2n-1}(F(t))_\Q$. Here $H^k(\mathcal{L},\Q)$ denotes the $k$-th Chevalley-Eilenberg cohomology group of the Lie coalgebra $\mathcal{L}$. The Beilinson-Soul{\'e} vanishing conjecture implies that for $n\geq 2$ the map
\[
\gr_{\gamma}^{n} K_{2n-1}(F)_\Q \lra \gr_{\gamma}^{n} K_{2n-1}(F(t))_\Q \,,
\]
induced by the inclusion $F\hookrightarrow F(t)$, is an isomorphism. Thus $R$ should be constant. Guided by this logic, we define the space $\mathcal{R}_{n}(F)$ as a span of the elements $\Sp_{t\to 0}(R)-\Sp_{t\to 1}(R)$ for $R$ as above, where  $\Sp_{t\to t_0}$ is the specialization map defined in \autoref{SectionSpecializationAn}.

 In \autoref{SectionCorrelatorProperties} we prove some of the well-known properties of correlators and in \autoref{sec:IteratedIntegrals}--\ref{sec:propertiesII} we introduce formal analogues of motivic iterated integrals and prove their basic properties. As was explained above, we define $\mathcal{H}^\mathrm{f}(F)$ as the universal coenveloping coalgebra of $\mathcal{L}^\mathrm{f}(F).$ In \autoref{SectionHopfAlebra} we lift the iterated integrals to the Hopf algebra. That allows to define multiple polylogarithms in \autoref{SectionMultiplePolylogarithms} and introduce the depth filtration on $\mathcal{H}^\mathrm{f}(F)$.
In the remaining part of \autoref{sec:MultiplePolylogarithms} we prove in our setting some of the known properties of multiple polylogarithms.

The Hodge and motivic realizations are discussed in  \autoref{SectionRealizations}.  For $F$ a subfield of $\mathbb{C},$ the Hopf algebra $\mathcal{H}^\mathrm{f}(F)$ can be mapped to the Hopf algebra of framed mixed Hodge--Tate structures $\mathcal{H}^{\QHod}(F)$. Thus every relation which holds in 
$\mathcal{H}^\mathrm{f}(F)$ leads to the corresponding relation between framed mixed Hodge--Tate structures. Goncharov~\cite[\S1.11]{Gon19} constructed a $\Q$-linear map $p_{\R}\colon \mathcal{H}^{\QHod}(F)\lra \mathbb{R},$ called the \emph{canonical real period map}, which allows us to associate certain real numbers to the elements of $\mathcal{H}^\mathrm{f}(F)$. For instance, the canonical real period of $\log(a)\in \mathcal{H}^\mathrm{f}_{1}(F)$ equals $\log |a|\in \mathbb{R}$ and 
of $\Li_2(a)\in \mathcal{H}^\mathrm{f}_{2}(F) \) equals \( \operatorname{Im}(\Li_2(a) +  \log |a| \log(1-a) )$, the Bloch-Wigner dilogarithm of~$a.$
 In \autoref{SectionMotivicRealization} we define a {\it motivic realization} 
\[
r_{\M}\colon \mathcal{H}^\mathrm{f}_n(F)\longrightarrow \mathcal{H}_n^{\M}(F).
\] 
We conjecture that the map $r_{\M}$ is an isomorphism. 

\subsection*{Acknowledgements}
We would like to thank Cl\'ement Dupont, Herbert Gangl, Alexander Kupers,  Marc Levine, Ismael Sierra, and the anonymous referees for numerous comments, remarks and suggestions which helped to improve the exposition.

\section{Construction of the Lie coalgebra of formal multiple polylogarithms}\label{SectionLieCoalgebraMP} 
In this and the following section we define the  Hopf algebra of formal multiple polylogarithms $\mathcal{H}^\mathrm{f}(F)$. We start with constructing its Lie coalgebra of indecomposables $\mathcal{L}^\mathrm{f}(F)$. The  Hopf algebra $\mathcal{H}^\mathrm{f}(F)$ is defined as the universal coenveloping coalgebra $U^c\bigl(\mathcal{L}^\mathrm{f}(F)\bigr)$ of $\mathcal{L}^\mathrm{f}(F)$. We will refer to the grading $n$ in $\mathcal{L}^\mathrm{f}_n(F)$ and $\mathcal{H}^\mathrm{f}_n(F)$ as the \emph{weight}. For the case of a finite field $F=\mathbb{F}_q$ we put $\mathcal{L}^\mathrm{f}(\mathbb{F}_q)=0$ (see \autoref{PropositionFrobenius} for the rationale). In the rest of the paper we assume that $F$ is infinite.  

\subsection{Lie coalgebra \texorpdfstring{$\mathcal{A}_\bullet(F)$}{A\_bullet(F)}} \label{SectionAn}
Let $F$ be a field. Consider a $\Q$-vector space $\mathcal{A}_n(F)$ for $n\geq 1$ spanned by \( \llp x_0,x_1,\ldots,x_n \rrp \), for all ordered tuples of points $(x_0,x_1,\dots,x_n)\in F^{n+1}$, modulo relations
\begin{align}
    \tag*{$\bf (A_1)$}\label{Acycle} & \llp x_0,x_1,\dots,  x_n \rrp = \llp x_1,x_2,\dots,x_n,x_0 \rrp \,,\\
    \tag*{$\bf (A_2)$}\label{Atranslate} & \llp x_0+b,x_1+b,\dots,  x_n+b \rrp = \llp x_0,x_1,\dots,x_n \rrp \text{ for $b\in F$,}\\
    \tag*{$\bf (A_3)$}\label{Alog} & \llp 0,x \rrp + \llp 0,y \rrp = \llp 0,xy \rrp\ \text{for $ x,y \in F^{\times}$,}\\
    \tag*{$\bf (A_4)$}\label{Ascale} & \llp mx_0,mx_1,\dots, mx_n\rrp = \llp x_0,x_1,\dots,x_n\rrp \ \text{for $m\in F^{\times}$ and $n\geq 2$,}\\
    \tag*{$\bf (A_5)$}\label{Azero} & \llp 0,0,\dots,0 \rrp =0\,,\\
    \tag*{$\bf (A_6)$}\label{Aone} & \llp 1,0,\dots,0\rrp =0\,.
\end{align}
We put $\log(x):=\llp 0,x\rrp \in \mathcal{A}_1(F)$.

\begin{lemma}\label{LemmaWeightOne}
The map 
\[
u\colon \mathcal{A}_1(F) \lra F^{\times}_\Q
\]
sending $\llp x_0,x_1\rrp$ with $x_0\neq x_1$ to $(x_1-x_0)\in F^{\times}_\Q$ is an isomorphism.
\end{lemma}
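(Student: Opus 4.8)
The plan is to exhibit an explicit two-sided inverse to $u$ and verify that both composites are the identity, so that the content reduces to two well-definedness checks. The natural candidate inverse is the $\Q$-linear map $v \colon F^\times_\Q \to \mathcal{A}_1(F)$ determined by $a \mapsto \log(a) = \llp 0, a \rrp$ for $a \in F^\times$.

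First I would verify that $u$ is well-defined, i.e.\ that the assignment $\llp x_0, x_1\rrp \mapsto (x_1 - x_0)$ for $x_0 \neq x_1$, together with $\llp x, x \rrp \mapsto 0$, respects each of the defining relations \ref{Acycle}, \ref{Atranslate}, \ref{Alog}, \ref{Azero}, \ref{Aone} (relation \ref{Ascale} is vacuous in weight one). The computations are immediate for \ref{Atranslate}, \ref{Alog} and \ref{Azero}. The two checks that require passing to the rationalization $F^\times_\Q = \Q \otimes_{\Z} F^\times$ are \ref{Acycle} and \ref{Aone}: both hinge on the fact that $-1$ is $2$-torsion in $F^\times$, so that $1 \otimes (-1) = 0$ after tensoring with $\Q$. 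Concretely, \ref{Aone} sends $\llp 1, 0\rrp$ to $1 \otimes (-1) = 0$, and for \ref{Acycle} one has $1 \otimes (x_0 - x_1) = 1\otimes(-1) + 1 \otimes (x_1 - x_0) = 1 \otimes (x_1 - x_0)$, so the value is insensitive to the order of the two points.

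Next I would construct $v$. By \ref{Alog} the map $F^\times \to \mathcal{A}_1(F)$, $a \mapsto \log(a)$, is a homomorphism from the multiplicative group to the additive group of the $\Q$-vector space $\mathcal{A}_1(F)$; since the target is uniquely divisible, this homomorphism factors uniquely through $F^\times_\Q$, defining $v$. It then remains to check that $u$ and $v$ are mutually inverse. The composite $u \circ v$ sends $1 \otimes a$ to $u(\llp 0, a\rrp) = a$, hence is the identity on generators. For $v \circ u$, relation \ref{Atranslate} (translation by $-x_0$) rewrites any symbol as $\llp x_0, x_1 \rrp = \llp 0, x_1 - x_0 \rrp = \log(x_1 - x_0)$ when $x_0 \neq x_1$, which is exactly $v(u(\llp x_0,x_1\rrp))$; and the degenerate symbols $\llp x, x \rrp$ vanish by \ref{Atranslate} and \ref{Azero}, matching $v(u(\llp x,x\rrp)) = v(0) = 0$.

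I do not anticipate a serious obstacle: the lemma is essentially the bookkeeping statement that the only weight-one relations are those already present in $F^\times_\Q$. The one genuinely load-bearing point, which I would make explicit rather than gloss over, is the role of rationalization in absorbing the sign ambiguity of \ref{Acycle} and the vanishing in \ref{Aone}; over $\Z$ the analogous map would fail to be well-defined, since $\log(-1)$ would be a nonzero $2$-torsion class. Surjectivity is immediate (every $1 \otimes a$ equals $u(\llp 0, a \rrp)$), so the whole argument is carried by the inverse construction.
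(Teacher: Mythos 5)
Your proof is correct and follows essentially the same route as the paper: verify that $u$ respects the relations (with the sign ambiguity in \ref{Acycle} and the vanishing in \ref{Aone} absorbed by $2$-torsion dying in $F^{\times}_\Q$), and construct the inverse $v$ from $a \mapsto \log(a)$ using \ref{Alog}. The only cosmetic difference is that you obtain $v$ from the universal property of rationalization (unique divisibility of the target), whereas the paper presents $F^{\times}_\Q$ explicitly as a quotient of $\Q[F]$; these are equivalent.
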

\begin{proof}
First, we show that $u$ is well-defined. It is easy to see that $u$ annihilates \ref{Atranslate}--\ref{Aone}. To see that $u$ annihilates \ref{Acycle}, notice that $x=-x$ in $F^{\times}_{\Q}$ for every $x\in F^{\times}$. Thus
\[
u\bigl (\llp x_0,x_1\rrp \bigr)=x_1-x_0= (-1)(x_0-x_1)=x_0-x_1=u\bigl (\llp x_1,x_0\rrp \bigr)
\]
which shows that $u$ annihilates \ref{Acycle}.

To construct the map $v$ in the opposite direction, recall that $F^{\times}$ is a quotient of $\mathbb{Z}[F]$ by a subgroup generated by elements $[x]+[y]-[xy]$ for $x,y \in F^{\times}.$ Thus $F^{\times}_{\Q}$ is a quotient of  $\mathbb{Q}[F]$ by the subspace spanned by elements $[x]+[y]-[xy]$. We define the map $v\colon \mathbb{Q}[F] \lra \mathcal{A}_1(F)$ by the formula $v([x])=\log(x)$, which is  well-defined by \ref{Alog}. The maps $u$ and $v$ are mutually inverse, which finishes the proof.
\end{proof}

Recall the definition of a Lie coalgebra. For a vector space $V$ over $\Q$, consider maps $\tau \colon V \otimes V \lra V \otimes V$ sending $a\otimes b$ to $b\otimes a$ and $\eta \colon  V \otimes V \otimes V\lra V \otimes V\otimes V$ sending $a\otimes b \otimes c$ to $b\otimes c \otimes a$. A~Lie coalgebra is a $\Q$-vector space $\L$ together with a map $\delta \colon\L\lra \L\otimes \L$ such that $\tau \circ \delta = -\delta$ and  $(1+\eta +\eta^2)\circ (1\otimes \delta)\circ\delta=0$ (coJacobi identity). The subspace $\bigwedge^2 \L\subseteq \L\otimes \L$  is spanned by elements $v_1\wedge v_2 \coloneqq v_1\otimes v_2-v_2 \otimes v_1$; the image of the Lie cobracket is contained in this subspace.

We define a Lie cobracket 
$\delta\colon \mathcal{A}(F)\lra \bigwedge^2 \mathcal{A}(F)$
by the formula
\begin{equation*}
\delta \llp x_0,\dots, x_n\rrp =\sum_{j=0}^n\sum_{i=1}^{n-1}\llp x_{j}, x_{j+1}, \dots, x_{j+i}\rrp \wedge \llp x_{j},  x_{j+i+1}, \dots, x_{j+n}\rrp \,,
\end{equation*}
where we use the convention that $x_{i+n+1}=x_{i}$ for all $i$. This definition is motivated by the formula for the cobracket of motivic correlators~\cite[Eq.~(66)]{GR-zeta4}. \Autoref{fig:cobracket} shows a term in the cobracket for \( n = 5, j = 1, i = 2\).
\begin{figure}[H]
\begin{equation*}
	\begin{tikzpicture}[style = { thick}, baseline={([yshift=-.5ex]current bounding box.center)}]
	\draw (3,0) arc [start angle= 450, end angle = 90, radius=1.4]
	node[pos=0.0, name=x0, inner sep=0pt] {}
	node[pos=0.166, name=x1, inner sep=0pt] {}
	node[pos=0.333, name=x2, inner sep=0pt] {}
	node[pos=0.500, name=x3, inner sep=0pt] {}
	node[pos=0.583, name=xmid, inner sep=0pt] {}
	node[pos=0.666, name=x4, inner sep=0pt] {}
	node[pos=0.833, name=x5, inner sep=0pt] {}
;
	\filldraw
	(x0) circle [radius=0.1, fill=black] node[above, yshift=0.5ex] {$x_0$}
	(x1) circle [radius=0.1, fill=black] node[above right] {$x_1$}
	(x2) circle [radius=0.1, fill=black] node[below right] {$x_2$}
	(x3) circle [radius=0.1, fill=black] node[below, yshift=-0.5ex] {$x_3$}
	(x4) circle [radius=0.1, fill=black] node[below left] {$x_4$}
	(x5) circle [radius=0.1, fill=black] node[above left] {$x_5$}
;
	\draw [densely dotted, blue] (x1) edge (xmid);
	\end{tikzpicture}
\;\longrightarrow\;
\begin{tikzpicture}[style = { thick}, baseline={([yshift=-0.5ex]current bounding box.center)}]
\draw (3,0) arc [start angle=450, end angle = 90, radius=1]
node[pos=0.0, name=x1b, inner sep=0pt] {}
node[pos=0.333, name=x2b, inner sep=0pt] {}
node[pos=0.5, name=xphantom, inner sep=0pt] {}
node[pos=0.666, name=x3b, inner sep=0pt] {}
;
\filldraw
(x1b) circle [radius=0.1, fill=black] node[above, yshift=0.5ex] {$x_1$}
(x2b) circle [radius=0.1, fill=black] node[below right] {$x_2$}
(x3b) circle [radius=0.1, fill=black] node[below left] {$x_3$}
(xphantom) node[below, yshift=-0.5ex] {$\phantom{x_5}$}
;
\end{tikzpicture}
\; \wedge \;
\begin{tikzpicture}[style = { thick}, baseline={([yshift=-.5ex]current bounding box.center)}]
\draw (3,0) arc [start angle=450, end angle = 90, radius=1]
node[pos=0.0, name=x1b, inner sep=0pt] {}
node[pos=0.25, name=x4b, inner sep=0pt] {}
node[pos=0.5, name=x5b, inner sep=0pt] {}
node[pos=0.75, name=x0b, inner sep=0pt] {}
;
\filldraw
(x1b) circle [radius=0.1, fill=black] node[above, yshift=0.5ex] {$x_1$}
(x4b) circle [radius=0.1, fill=black] node[right, xshift=0.5ex] {$x_4$}
(x5b) circle [radius=0.1, fill=black] node[below, yshift=-0.5ex] {$x_5$}
(x0b) circle [radius=0.1, fill=black] node[left, xshift=-0.5ex] {$x_0$} ;
\end{tikzpicture}
\end{equation*}
\caption{A term in the cobracket for \( n = 5, j = 1, i = 2\).  We cut the circle from a labeled point to an arc, and make two new circles with labeled points. The cobracket is obtained by summing over all such cuts.}
\label{fig:cobracket}
\end{figure}
\begin{lemma} \label{LemmaCorrelatorsAffineInvariance}
The map $\delta$  is well-defined and satisfies the coJacobi identity.
\end{lemma}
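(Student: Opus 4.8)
The plan is to treat the two assertions separately. Since $\mathcal{A}_n(F)$ is the quotient of the free $\Q$-vector space on the symbols $\llp x_0,\dots,x_n\rrp$ by \ref{Acycle}--\ref{Aone}, proving that $\delta$ is well defined amounts to checking that the right-hand side of the defining formula is annihilated, in $\bigwedge^2\mathcal{A}(F)$ (where the same relations are already imposed in each weight $<n$ by induction), by each of these relations. The condition $\tau\circ\delta=-\delta$ is automatic, since $\delta$ lands in $\bigwedge^2\mathcal{A}(F)$ by construction.

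Most relations are immediate. Relation \ref{Acycle} holds because the double sum is manifestly invariant under the cyclic shift $x_i\mapsto x_{i+1}$, which merely reindexes $j$. Relation \ref{Atranslate} holds term by term, since translating every $x_i$ by $b$ translates each consecutive block by $b$, and each factor — including the weight-one factors $\llp x_a,x_b\rrp=\log(x_b-x_a)$ — is translation-invariant. Relation \ref{Alog} is vacuous because $\delta$ vanishes on $\mathcal{A}_1(F)$ (the inner sum over $i$ is empty when $n=1$). For \ref{Azero} and \ref{Aone} I would argue that every individual term of $\delta$ has a vanishing factor: when all $x_i$ agree, each factor is $\llp 0,\dots,0\rrp=0$ by \ref{Azero}; and for $\llp 1,0,\dots,0\rrp$ the entry $1$ lies either in the shared base vertex or in exactly one of the two blocks, so by \ref{Acycle} that factor equals $\llp 1,0,\dots,0\rrp$ and vanishes by \ref{Aone} (the weight-one boundary cases give $\log(\pm1)=0$ in $F^\times_\Q$).

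The only subtle relation is \ref{Ascale}. Rescaling all $x_i$ by $m$ fixes every factor of weight $\geq 2$ by \ref{Ascale} itself, but changes a weight-one factor by $\log(m)$, since $\llp mx_a,mx_b\rrp=\log(m)+\llp x_a,x_b\rrp$. Weight-one factors occur exactly for $i=1$ (first factor) and $i=n-1$ (second factor), so
\[
\delta\llp mx_0,\dots,mx_n\rrp-\delta\llp x_0,\dots,x_n\rrp=\log(m)\wedge(P-Q),
\]
where $P=\sum_{j}\llp x_j,x_{j+2},\dots,x_{j+n}\rrp$ and $Q=\sum_{j}\llp x_j,x_{j+1},\dots,x_{j+n-1}\rrp$. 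I would then observe that $P$ and $Q$ are both equal to the sum, over all vertices, of the weight-$(n-1)$ correlator obtained by deleting that vertex and keeping the induced cyclic order: in $P$ the deleted vertex is $x_{j+1}$ and in $Q$ it is $x_{j-1}$, and in each case \ref{Acycle} identifies the result with the canonical cyclic tuple on the remaining points. Hence $P=Q$ and the correction vanishes.

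For the coJacobi identity I would argue combinatorially using the polygon picture of \autoref{fig:cobracket}. A term of $\delta C$ is a cut of the $(n+1)$-gon from a base vertex to an arc, producing two sub-correlators sharing the base vertex; consequently a term of $(1\otimes\delta)\circ\delta$ corresponds to a pair of such cuts, the second drawn inside the second piece, dissecting the polygon into three sub-correlators. The plan is to expand $(1\otimes\delta)\circ\delta\,C$ as a signed sum over these ordered two-cut dissections and to show that, after applying $1+\eta+\eta^2$, the total coefficient of every unordered two-chord configuration is zero. Generic configurations, in which the two chords are disjoint (a ``path'' dissection $X\mid Y\mid Z$), cancel in pairs: the dissection can be produced by cutting either chord first, and passing between the two orderings reverses the wedge sign while the cyclic operator $1+\eta+\eta^2$ matches the corresponding images. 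The delicate cases — which I expect to be the main obstacle — are the degenerate ``star'' configurations where the two chords share their base vertex (or abut a common arc); there no term-by-term pairing is available, and one must instead use the full three-term cyclic symmetrization, together with \ref{Acycle}, to see that the three contributions around the common vertex cancel. I would organize the bookkeeping by fixing the unordered pair of chords and tracking the induced base vertices and the sign of each of the three cyclic images, reducing the identity to this finite case analysis.
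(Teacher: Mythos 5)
Your proposal is correct and follows essentially the same route as the paper: the only nontrivial check is \ref{Ascale}, and your computation of the discrepancy as $\log(m)\wedge(P-Q)$ with $P=Q$ via the vertex-deletion interpretation is the same cancellation that the paper realizes by explicitly pairing each $i=n-1$ term with the corresponding $i=1$ term after applying \ref{Acycle} (your uniform treatment of $n=2$ works because the cross term $\log(m)\wedge\log(m)$ vanishes). For the coJacobi identity you give only an outline of the two-cut dissection cancellation, but the paper likewise leaves this as an exercise with a reference to Goncharov, so the two treatments are on par.
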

\begin{proof} To see that $\delta$ is well-defined, we need to check that it respects relations \refAall. We check that for \ref{Ascale}; the rest is trivial.

First, notice that \ref{Acycle}, \ref{Atranslate} and \ref{Alog} imply that 
\begin{equation}\label{FormulaA1}
\llp mx,my\rrp =\log(m)+\llp x,y\rrp  \in \mathcal{A}_1(F)\,.
\end{equation}

We need to show the following identity in 
$\bigoplus_k \mathcal{A}_k(F)\wedge  \mathcal{A}_{n-k}(F)$:
\begin{equation} 
\label{CorrelatorAffineInvariance}
\begin{aligned}
&\sum_{i=1}^{n-1}\sum_{j=0}^n\llp mx_{j}, mx_{j+1}, \dots, mx_{j+i}\rrp \wedge \llp mx_{j}, mx_{j+i+1}, \dots, mx_{j+n}\rrp \\ 
&{} = \sum_{i=1}^{n-1}\sum_{j=0}^n\llp x_{j}, x_{j+1}, \dots, x_{j+i}\rrp \wedge \llp x_{j},  x_{j+i+1}, \dots, x_{j+n}\rrp \,.
\end{aligned}
\end{equation}
For $n=1$ there is nothing to check. For $n=2$, using~\eqref{FormulaA1} we have
\begin{align*}
&\llp mx_0,mx_1\rrp \wedge\llp mx_0,mx_2\rrp +\llp mx_1,mx_2\rrp \wedge\llp mx_1,mx_0\rrp +\llp mx_2,mx_0\rrp \wedge\llp mx_2,mx_1\rrp \\[1ex]
&{} \, = \, \begin{aligned}[t] & \llp x_0,x_1\rrp \wedge\llp x_0,x_2\rrp +\llp x_1,x_2\rrp \wedge\llp x_1,x_0\rrp +\llp x_2,x_0\rrp \wedge\llp x_2,x_1\rrp \\
& {} +\log(m) \wedge \bigl( \llp x_0,x_2\rrp -\llp x_0,x_1\rrp +\llp x_1,x_0\rrp -\llp x_1,x_2\rrp +\llp x_2,x_1\rrp -\llp x_2,x_0\rrp \bigr) 
\end{aligned} \\[1ex]
&{} \, = \,\llp x_0,x_1\rrp \wedge\llp x_0,x_2\rrp +\llp x_1,x_2\rrp \wedge\llp x_1,x_0\rrp +\llp x_2,x_0\rrp \wedge\llp x_2,x_1\rrp \,.
\end{align*}
Next, assume that $n>2$. Any term on the LHS of~\eqref{CorrelatorAffineInvariance} with $2\le i\le n-2$ (which lies in $\mathcal{A}_i(F)\wedge\mathcal{A}_{n-i}(F)$) coincides with the corresponding term on the RHS of~\eqref{CorrelatorAffineInvariance} by \ref{Ascale}. For the remaining terms (with $i\in\{1,n-1\}$) we calculate:
\begin{align*}
&\sum_{j=0}^n \llp mx_j,\dots,mx_{j+n-1}\rrp \wedge \bigl(\llp mx_j,mx_{j+n}\rrp -\llp mx_{j+n-1},mx_{j+n}\rrp \bigr)\\
&{} = \sum_{j=0}^n \llp x_j,\dots,x_{j+n-1}\rrp \wedge \bigl(\log(m)+\llp x_j,x_{j+n}\rrp-\log(m)-\llp x_{j+n-1},x_{j+n}\rrp \bigr) \\
&{} = \sum_{j=0}^n \llp x_j,\dots,x_{j+n-1}\rrp \wedge \bigl( \llp x_j,x_{j+n}\rrp-\llp x_{j+n-1},x_{j+n}\rrp\bigr) \,.
\end{align*}
Thus $\delta$ respects the \ref{Ascale} relation.

We leave the proof of the coJacobi identity as an exercise, it can be done by repeating the argument from~\cite[p.~436]{Gon01B}.
\end{proof}

\subsection{Specialization map on \texorpdfstring{$\mathcal{A}_n(F)$}{A\_n(F)}} \label{SectionSpecializationAn}

Consider a discrete valuation $\nu\colon F \lra \Z\cup\{\infty\}$, and the associated discrete valuation ring $\mathcal{O}=\{x\in F \mid \nu(x)\geq 0\}$, maximal ideal  $\mathfrak{m}=\{x\in F \mid \nu(x)\geq 1\}$, and residue field $\mathrm{k}=\mathcal{O}/\mathfrak{m}$. For $x\in\mathcal{O}$ we denote by $\overline{x}$ the image of $x$ the residue field $\mathrm{k}$. Recall that a uniformizer~$\pi$ is a generator of the ideal $\mathfrak{m}$. For $s\in \Z$ we call a collection of elements $x_0,\dots,x_n\in F$ {\it congruent modulo $\pi^s$} if $x_i-x_j\in \pi^s\mathcal{O}$ for $0\leq i,j\leq n$.

Given a  uniformizer $\pi\in \mathcal{O}$,  we define the specialization homomorphism 
\[
\Sp_{\nu, \pi}\colon \mathcal{A}_n(F)\lra \mathcal{A}_n(\mathrm{k}) 
\] 
in the following way. For  $x_0,\dots,x_n\in F$ not all equal to each other, consider the largest $s \in \Z$ such that all $x_i$ are congruent modulo $\pi^s$. Then there exist $a\in F$ and $y_0,\dots, y_n\in \mathcal{O}$ such that $x_i=y_i \pi^{s}+a$ and not all $y_i$ are congruent modulo $\pi$. In this case, we put 
\[
\Sp_{\nu, \pi}\llp x_0,\dots,x_n\rrp= \llp \overline{y}_0,\dots,\overline{y}_n\rrp\,.
\] 
By \ref{Atranslate}, the specialization does not depend on the choice of $a$. It is not hard to see that the map $\Sp_{\nu, \pi}$ is well-defined. By \ref{Ascale}, the specialization $\Sp_{\nu, \pi}$ does not depend on the choice of $\pi$ for $n\geq 2$. For simplicity we will usually suppress $\pi$ from the notation to write just \( \Sp_\nu \), even for $n=1$.

\begin{remark} \label{rem:defspec}
Our definition of specialization is motivated by the continuity property of Hodge correlators, see \cite[Theorem 11]{Mal20}.
\end{remark}

\begin{remark} \label{rem:spec0}
Observe that by \ref{Atranslate} $\mathcal{A}_n(F)$ is spanned by elements of the form $(0,x_1,\dots,x_n)$ with $x_i\in F$ and not all $x_i$ equal to $0$. Let $s$ be such that $\pi^s x_1,\dots,\pi^s x_n\in \mathcal{O}$ but not all $\pi^s x_i$ lie in $\mathfrak{m}$. Then 
\[
\Sp_{\nu}\llp 0,x_1,\dots,x_n\rrp = \llp 0,\overline{\pi^s x_1},\dots,\overline{\pi^s x_n}\rrp\in \mathcal{A}_n(\mathrm{k})\,.
\]
\end{remark}

\begin{lemma}\label{LemmaSpecializationCoproduct}
For any discrete valuation \( \nu \) on $F$, the Lie cobracket $\delta\colon \mathcal{A}(F)\lra \bigwedge^2 \mathcal{A}(F)$ commutes with the specialization, giving the following commutative diagram
\[
\begin{tikzcd}[row sep=large]
\mathcal{A}(F)  \arrow{r}{\delta} \arrow[d, "\Sp_\nu"] & \bigwedge^2 \mathcal{A}(F)  \arrow[d, "\Sp_\nu \wedge \Sp_\nu"] \\\mathcal{A}(\mathrm{k})  \arrow{r}{\delta}  & \bigwedge^2 \mathcal{A}(\mathrm{k})  \,.
\end{tikzcd}
\]
\end{lemma}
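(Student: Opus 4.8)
The plan is to verify the identity $\delta \circ \Sp_\nu = (\Sp_\nu \wedge \Sp_\nu)\circ \delta$ on a spanning symbol $\llp x_0,\dots,x_n\rrp$ (both maps being $\Q$-linear and well-defined by \autoref{LemmaCorrelatorsAffineInvariance}), comparing the two expressions term by term over the index set of the cobracket. Fix a uniformizer $\pi$ and write $\Sp = \Sp_{\nu,\pi}$. Let $s$ be the largest integer with all $x_i$ congruent modulo $\pi^s$, write $x_i = y_i\pi^s + a$ with $y_i \in \mathcal{O}$ and residues $\overline y_i \in \mathrm{k}$ not all equal, so that $\Sp\llp x_0,\dots,x_n\rrp = \llp \overline y_0,\dots,\overline y_n\rrp$. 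Both $\delta(\Sp\llp x_0,\dots,x_n\rrp)$ and $(\Sp\wedge\Sp)(\delta\llp x_0,\dots,x_n\rrp)$ are then indexed by pairs $(j,i)$: the $(j,i)$-summand of the first is $\llp \overline y_j,\dots,\overline y_{j+i}\rrp \wedge \llp \overline y_j,\overline y_{j+i+1},\dots,\overline y_{j+n}\rrp$, and of the second is $\Sp\llp x_j,\dots,x_{j+i}\rrp \wedge \Sp\llp x_j,x_{j+i+1},\dots,x_{j+n}\rrp$.

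First I would classify each pair $(j,i)$ according to whether one of its two sub-arcs \emph{collapses}, meaning all of its points share one residue (equivalently, the sub-arc is congruent modulo $\pi^{s+1}$, so its own specialization is taken at a strictly finer scale). Since the two sub-arcs share the base point $x_j$ and together exhaust all $n+1$ points, they cannot collapse simultaneously without forcing all $\overline y_i$ to coincide, contradicting the maximality of $s$. If neither sub-arc collapses, both have scale exactly $s$, each specialization simply records residues, and the two $(j,i)$-summands agree. On the $\delta\circ\Sp$ side, any summand in which a sub-arc collapses vanishes, since a collapsing sub-arc produces a symbol $\llp \overline c,\dots,\overline c\rrp$, which is $0$ by \ref{Atranslate} and \ref{Azero}. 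Thus $\delta\circ\Sp$ reduces to the sum over the non-collapsing pairs, matching the corresponding part of $(\Sp\wedge\Sp)\circ\delta$.

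It remains to show that the leftover summands of $(\Sp\wedge\Sp)\circ\delta$ — those in which exactly one sub-arc collapses — cancel. I would exhibit the bijection $(j,i)\mapsto (j+i,\,n-i)$ pairing a term whose \emph{first} factor is a collapsing block $B = \{x_j,\dots,x_{j+i}\}$ with the unique term whose \emph{second} factor is $B$; cyclicity \ref{Acycle} identifies the two occurrences of the block factor $L := \Sp\llp x_j,\dots,x_{j+i}\rrp$. The crucial point is that the complementary factors also coincide: in the first term it is $\Sp\llp x_j,x_{j+i+1},\dots,x_{j+n}\rrp$ and in its partner it is $\Sp\llp x_{j+i},x_{j+i+1},\dots,x_{j+n}\rrp$, and since $x_j$ and $x_{j+i}$ both lie in the collapsing block $B$ they share the residue $\overline c$; as the complementary arc necessarily contains a point of residue $\neq \overline c$ (otherwise the whole tuple would collapse), both complementary tuples have scale $s$ and their specializations read off the same residue sequence, giving a common value $M$. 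The two paired summands are therefore $L\wedge M$ and $M\wedge L$, which cancel.

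The main obstacle is precisely this last identification of the complementary factors: one must check both that the complementary tuple does not itself collapse (so that its specialization is taken at scale $s$ and merely records residues) and that replacing the base point $x_j$ by $x_{j+i}$ leaves the outcome unchanged, which is what makes $L\wedge M$ and $M\wedge L$ genuine partners. A minor bookkeeping point concerns the weight-one factors, where $\Sp$ depends on $\pi$; fixing a single uniformizer $\pi$ throughout renders all weight-one specializations appearing on both sides consistent, and for $n=1$ the cobracket vanishes, so the statement is vacuous there.
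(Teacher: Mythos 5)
Your proposal is correct and follows essentially the same route as the paper's proof: reduce to the scale-$s$ residues, observe that the two sub-arcs of a cut cannot both collapse, match the non-collapsing terms directly, kill the collapsed blocks on the $\delta\circ\Sp_\nu$ side via \ref{Atranslate} and \ref{Azero}, and cancel the remaining terms of $(\Sp_\nu\wedge\Sp_\nu)\circ\delta$ in pairs $(j,i)\leftrightarrow(j+i,n-i)$ using $\overline{x}_j=\overline{x}_{j+i}$ and cyclic symmetry. This is precisely the paper's pairing $T[i,j]+T[n-i,j+i]=0$.
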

\begin{proof}
We need to show that 
\[
\delta \, \Sp_\nu\llp x_0,\dots, x_n\rrp=\sum_{j=0}^n\sum_{i=1}^{n-1}\Sp_\nu\llp x_{j}, x_{j+1}, \dots, x_{j+i}\rrp \wedge \Sp_\nu\llp x_{j},  x_{j+i+1}, \dots, x_{j+n}\rrp\,.
\]
We can assume that $n\geq 2$. Since $\Sp_\nu \llp x_{i_0},\dots,x_{i_\ell}\rrp = \Sp_\nu \llp \pi x_{i_0},\dots,\pi x_{i_\ell}\rrp$, we can assume that $x_0,\dots,x_n\in \mathcal{O}$. If $x_0,\dots,x_n$ are congruent modulo $\pi^s$ for some $s\geq 1$, we may assume that $s$ is the maximal number with this property and consider $y_0,\dots, y_n\in \mathcal{O}$ such that $x_i=y_i \pi^{s}+a$ and not all $y_i$ are congruent modulo $\pi$. Then  $\Sp_\nu \llp x_{i_0},\dots,x_{i_\ell}\rrp=\Sp_\nu \llp y_{i_0},\dots,y_{i_\ell}\rrp$ for any subset $\{i_0,\dots,i_\ell\}\subseteq\{0,\dots,n\}$. So we have reduced the lemma to the special case when $x_0,\dots,x_n\in \mathcal{O}$ and are not congruent modulo~$\pi$. 

In this case, we have $\Sp_\nu\llp x_0,\dots,x_n\rrp=\llp \overline{x}_0,\dots,\overline{x}_n\rrp$. 
We introduce the notation 
\[
T[i,j] \,\coloneqq\, \begin{aligned}[t] 
& \llp \overline{x}_{j}, \overline{x}_{j+1}, \dots, \overline{x}_{j+i}\rrp \wedge \llp \overline{x}_{j}, \overline{x}_{j+i+1}, \dots, \overline{x}_{j+n}\rrp \\
& {} - \Sp_\nu \llp x_{j}, x_{j+1}, \dots, x_{j+i}\rrp \wedge \Sp_\nu \llp x_{j}, x_{j+i+1}, \dots, x_{j+n}\rrp 
\end{aligned}
\]
and need to show that 
\begin{equation} \label{difference_correlators}
\sum_{j=0}^{n}\sum_{i=1}^{n-1}T[i,j]=0\,.
\end{equation}
If $x_j,x_{j+1},\dots, x_{j+i}$ are not all congruent modulo $\pi$ and $x_{j}, x_{j+i+1}, \dots, x_{j+n}$ are not  all congruent  modulo~$\pi$, then
\begin{align*}
\Sp_\nu\llp x_{j}, x_{j+1}, \dots, x_{j+i}\rrp &=\llp \overline{x}_{j}, \overline{x}_{j+1}, \dots, \overline{x}_{j+i}\rrp \,, \text{ and } \\
\Sp_\nu \llp x_{j}, x_{j+i+1}, \dots, x_{j+n}\rrp &= \llp \overline{x}_{j}, \overline{x}_{j+i+1}, \dots, \overline{x}_{j+n}\rrp \,,
\end{align*}
so $T[i,j]$  vanishes. Next, since $x_0,\dots,x_n\in \mathcal{O}$ and are not congruent modulo~$\pi$, it can not happen that  $x_j,x_{j+1},\dots, x_{j+i}$  and $x_{j}, x_{j+i+1}, \dots, x_{j+n}$ are congruent modulo $\pi$ at the same time. Finally, notice that the remaining terms in~\eqref{difference_correlators} can be grouped in pairs in the following way. Every element $\llp x_j, x_{j+1}, \dots, x_{j+i}\rrp$ with entries congruent modulo $\pi$ appears in two terms: $T[i,j]$ and \mbox{$T[n-i,j+i]$}. We have 
\begin{align*}
T[i,j] & \,=\, {} \begin{aligned}[t]
& \llp \overline{x}_j, \overline{x}_{j+1},\dots, \overline{x}_{j+i}\rrp\wedge \llp \overline{x}_j, \overline{x}_{j+i+1},\dots, \overline{x}_{j+n}\rrp \\
& {} - \Sp_\nu \llp x_j, x_{j+1},\dots, x_{j+i}\rrp \wedge \Sp_\nu \llp x_j, x_{j+i+1},\dots, x_{j+n}\rrp \,, \end{aligned} \\[1ex]
T[n-i,j+i] & \,=\, {} \begin{aligned}[t] 
& \llp \overline{x}_{j+i},  \overline{x}_{j+i+1},\dots, \overline{x}_{j+n}\rrp \wedge \llp \overline{x}_{j+i}, \overline{x}_{j},  \dots, \overline{x}_{j+i-1} \rrp \\
& {} - \Sp_\nu \llp x_{j+i},  x_{j+i+1},\dots, x_{j+n}\rrp \wedge \Sp_\nu\llp x_{j+i}, x_{j},  \dots, x_{j+i-1}\rrp \,. \end{aligned}
\end{align*}
Assume (say) that $x_j, x_{j+1}, \dots, x_{j+i}$ are all congruent to $a$ modulo $\pi$.  Then  
\[
\llp \overline{x}_j, \overline{x}_{j+1},\dots, \overline{x}_{j+i}\rrp=\llp \overline{x}_{j+i}, \overline{x}_{j},  \dots, \overline{x}_{j+i-1}\rrp=\llp a,\dots,a\rrp =0\,.
\] 
This assumption means that not all $x_j, x_{j+i+1},\dots, x_{j+n}$ are congruent modulo $\pi$, so 
 \[
 \Sp_\nu\llp x_j, x_{j+i+1},\dots, x_{j+n}\rrp = \llp \overline{x}_j, \overline{x}_{j+i+1},\dots,\overline{x}_{j+n}\rrp 
\]
and, similarly,
\[
\Sp_\nu \llp x_{j+i},  x_{j+i+1},\dots, x_{j+n}\rrp = \llp \overline{x}_{j+i},  \overline{x}_{j+i+1},\dots, \overline{x}_{j+n}\rrp\,.
\]
Finally, observe that $\overline{x}_j=\overline{x}_{j+i}=a$, so 
\[
\llp \overline{x}_j, \overline{x}_{j+i+1},\dots, \overline{x}_{j+n}\rrp=\llp \overline{x}_{j+i},  \overline{x}_{j+i+1},\dots, \overline{x}_{j+n}\rrp\,.
\]
So, $T[i,j]+T[n-i,j+i]=0$.  From here the statement follows.
\end{proof}

\subsection{Space of relations and the correlators}\label{SectionRelations}
We will define the space of relations $\mathcal{R}_n(F)\subseteq \mathcal{A}_n(F)$ for all infinite fields, inductively in $n$; the Lie coalgebra of formal multiple polylogarithms is then defined as the quotient
\begin{equation} \label{FormulaQuotient}
\mathcal{L}^\mathrm{f}_n(F)=\frac{\mathcal{A}_n(F)}{\mathcal{R}_n(F)}\,.
\end{equation}
The projection of $\llp x_0,x_1,\dots, x_n\rrp \in \mathcal{A}_n(F)$ to $\mathcal{L}^\mathrm{f}_n(F)$ is denoted by $\cor(x_0,x_1,\dots, x_n)$ and called {\it the correlator}. 

In weight one we put $\mathcal{R}_1(F)=0$. We have $\L_1^\mathrm{f}(F)\cong F^{\times}_\Q$ and $\cor(x_0,x_1)=\log(x_1-x_0)$. Next, assume that spaces $\mathcal{R}_i(F')$ are defined for $1\leq i<n$ and all infinite fields $F'$. Consider the field $F(t)$; let $\nu_a$ be the discrete valuation corresponding to $a\in F\cup\{\infty\}$. In this setting, we denote the specialization $\Sp_{\nu_a, t-a}$ by $\Sp_{t\to a}$ and $\Sp_{\nu_{\infty}, \frac{1}{t}}$ by $\Sp_{t\to \infty}$. The space $\mathcal{R}_n(F)$ is spanned by elements
\[
\Sp_{t \to 0}(R)- \Sp_{t \to 1}(R)\in \mathcal{A}_n(F)
\]
for elements $R\in \mathcal{A}_n(F(t))$ with cobracket equal 0 in $\bigwedge^2 \mathcal{L}^\mathrm{f}(F(t))$. Note that the weight $n$ component of $\bigwedge^2 \mathcal{L}^\mathrm{f}(F(t))$ is defined by the inductive assumption.
We define the  $n$-th graded component of the Lie coalgebra of polylogarithms as the quotient space in \eqref{FormulaQuotient}. 

\begin{remark}
Note that \( \Sp_{t\to1} - \Sp_{t\to0} \) in the above definition can be replaced with \( \Sp_{t\to b} - \Sp_{t\to a} \) with $a\ne b$, $a,b,\in F$, without loss of generality, by choosing the generator $a+t(b-a)$ for $F(t)$. We may also take $b=\infty$, by choosing $a+t/(t-1)$.
\end{remark}

To show that the cobracket $\delta$ descends to $\mathcal{L}^\mathrm{f}(F)$ we need the following lemma.

\begin{lemma}\label{LemmaSpecializationRelations} Suppose $F$ is an infinite field.  For any $s_0\in F$, and any $n\in \mathbb{N}$ we have that \[
\Sp_{s\to s_0}\bigl(\mathcal{R}_n(F(s))\bigr)\subseteq \mathcal{R}_n(F) \,.\]
\end{lemma}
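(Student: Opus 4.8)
The plan is to argue by induction on the weight $n$, the case $n=1$ being trivial since $\mathcal{R}_1(F(s))=\mathcal{R}_1(F)=0$. So assume the inclusion holds in all weights $<n$ for every infinite field, and fix a generator of $\mathcal{R}_n(F(s))$, namely $g=\Sp_{t\to0}(R')-\Sp_{t\to1}(R')$ with $R'\in\mathcal{A}_n(F(s)(t))$ whose cobracket vanishes in $\bigwedge^2\mathcal{L}^\mathrm{f}(F(s)(t))$; here $t$ is a fresh variable, so that $F(s)(t)=F(t)(s)$ carries two independent discrete valuations: the $t$-adic one (residue field $F(s)$) and the $(s-s_0)$-adic one (residue field $F(t)$). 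Setting $R:=\Sp_{s\to s_0}(R')\in\mathcal{A}_n(F(t))$, I want to show that $\Sp_{s\to s_0}(g)=\Sp_{t\to0}(R)-\Sp_{t\to1}(R)$ and that this is a legitimate generator of $\mathcal{R}_n(F)$, i.e.\ that $\delta R=0$ in $\bigwedge^2\mathcal{L}^\mathrm{f}(F(t))$.

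For the vanishing of the cobracket I first apply \autoref{LemmaSpecializationCoproduct} to obtain $\delta R=(\Sp_{s\to s_0}\wedge\Sp_{s\to s_0})(\delta R')$. Every tensor factor occurring in $\delta R'$ has weight strictly less than $n$, so the induction hypothesis, applied to the infinite field $F(t)$ and the point $s_0\in F\subseteq F(t)$, gives $\Sp_{s\to s_0}\bigl(\mathcal{R}_k(F(t)(s))\bigr)\subseteq\mathcal{R}_k(F(t))$ for $k<n$; hence $\Sp_{s\to s_0}$ descends to a map $\overline{\Sp}_{s\to s_0}\colon\mathcal{L}^\mathrm{f}_k(F(t)(s))\to\mathcal{L}^\mathrm{f}_k(F(t))$ in these weights. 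The projection $\bigwedge^2\mathcal{A}\to\bigwedge^2\mathcal{L}^\mathrm{f}$ therefore intertwines $\Sp_{s\to s_0}\wedge\Sp_{s\to s_0}$ with $\overline{\Sp}_{s\to s_0}\wedge\overline{\Sp}_{s\to s_0}$, so the image of $\delta R$ in $\bigwedge^2\mathcal{L}^\mathrm{f}(F(t))$ equals the pushforward of the image of $\delta R'$ in $\bigwedge^2\mathcal{L}^\mathrm{f}(F(t)(s))$, which is $0$ by the assumption on $R'$. Thus $R$ is admissible and $\Sp_{t\to0}(R)-\Sp_{t\to1}(R)\in\mathcal{R}_n(F)$.

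The crux is therefore the remaining identity $\Sp_{s\to s_0}(g)=\Sp_{t\to0}(R)-\Sp_{t\to1}(R)$, which amounts to the commutativity of the two independent specializations,
\[
\Sp_{s\to s_0}\circ\Sp_{t\to\varepsilon}=\Sp_{t\to\varepsilon}\circ\Sp_{s\to s_0}\colon\ \mathcal{A}_n(F(s,t))\to\mathcal{A}_n(F),\qquad \varepsilon\in\{0,1\}.
\]
I expect this to be the main obstacle. It is a statement purely about $\mathcal{A}_n$ and the relations \ref{Acycle}--\ref{Aone}, so it may be invoked at weight $n$ with no circularity; but it is genuinely subtle, because the two iterated reductions extract the leading terms of the entries in two different orders (first in $t$ then in $s-s_0$, versus the reverse), and these need not agree entry by entry. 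For example $\Sp_{s\to s_0}\Sp_{t\to0}\llp0,t,s-s_0\rrp=\llp0,0,1\rrp$ while $\Sp_{t\to0}\Sp_{s\to s_0}\llp0,t,s-s_0\rrp=\llp0,1,0\rrp$, and these coincide only after the cyclic relation \ref{Acycle}.

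To establish the commutativity I would proceed on generators $\llp x_0,\dots,x_n\rrp$, in the spirit of the proof of \autoref{LemmaSpecializationCoproduct}. Using \ref{Atranslate} and, for $n\ge2$, the scaling relation \ref{Ascale}, I may clear denominators and normalise the joint valuation of the entries; one then compares the two leading-term symbols by tracking which entries become congruent in each order of reduction, the fully degenerate symbols being annihilated by \ref{Azero}--\ref{Aone} and the surviving ones matched through \ref{Acycle} and \ref{Atranslate}. Granting this commutativity, the two facts above combine to give $\Sp_{s\to s_0}(g)\in\mathcal{R}_n(F)$; since such elements $g$ span $\mathcal{R}_n(F(s))$, the lemma follows by linearity.
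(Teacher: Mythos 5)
Your reduction of the lemma to the identity $\Sp_{s\to s_0}\circ\Sp_{t\to\varepsilon}=\Sp_{t\to\varepsilon}\circ\Sp_{s\to s_0}$ on all of $\mathcal{A}_n(F(s,t))$ is where the argument breaks down: that commutativity is false at an arbitrary point, and no amount of bookkeeping with \ref{Acycle}--\ref{Aone} can repair it, because the two iterated leading-term reductions can produce genuinely inequivalent tuples. Already for $n=3$ and $s_0=0$, take the generator $\llp 0, s, t, s+t\rrp$. Specializing $t\to 0$ first gives $\llp 0,s,0,s\rrp$ and then $s\to 0$ gives $\llp 0,1,0,1\rrp$; in the other order one gets $\llp 0,0,t,t\rrp$ and then $\llp 0,0,1,1\rrp$. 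The cyclic coincidence patterns of $(0,1,0,1)$ and $(0,0,1,1)$ are different (alternating versus two adjacent blocks), so no composition of \ref{Acycle}, \ref{Atranslate}, \ref{Ascale} identifies them, and neither is annihilated by \ref{Azero} or \ref{Aone}; hence the two compositions already disagree in $\mathcal{A}_3(F)$. (Your example $\llp 0,t,s-s_0\rrp$ is rescued only by the accident that with three points every coincidence pattern is cyclically rigid.) The failure happens exactly on the indeterminacy locus of the map $(s,t)\mapsto[x_1-x_0:\cdots:x_n-x_0]$ --- here $[s:t:s+t]$ at $(0,0)$ --- and the prescribed point $(s_0,\varepsilon)$ may well lie on it. Showing instead that the discrepancy lies in $\mathcal{R}_n(F)$ would be circular at this stage.

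The paper keeps your first step (the inductive verification, via \autoref{LemmaSpecializationCoproduct} and the hypothesis for smaller weights, that the specialized element has vanishing cobracket) but replaces the pointwise commutativity by two observations. First, since $g=\Sp_{t\to0}R'-\Sp_{t\to1}R'$ itself has vanishing cobracket in $\bigwedge^2\mathcal{L}^\mathrm{f}(F(s))$, it is an admissible generator for the definition of $\mathcal{R}_n(F)$ with $s$ as the auxiliary variable, so $\Sp_{s\to s_0}(g)-\Sp_{s\to s_1}(g)\in\mathcal{R}_n(F)$ for \emph{any} $s_1\in F$: the specialization point can be moved freely modulo $\mathcal{R}_n(F)$. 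Second, the two specializations do commute at every point $(s_1,t_1)$ that is regular for each term of $R'$ in the above projective sense, and only finitely many points fail this; since $F$ is infinite one may choose $s_1$ with $(s_1,0)$ and $(s_1,1)$ both regular, whence $\Sp_{s\to s_1}(g)=\Sp_{t\to0}(\Sp_{s\to s_1}R')-\Sp_{t\to1}(\Sp_{s\to s_1}R')\in\mathcal{R}_n(F)$. Some such genericity-plus-mobility argument is needed; the commutativity you propose to prove on generators does not hold.
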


\begin{proof}
We prove the statement by induction on $n$.  For the base case $n=1$, there is nothing to check as \( \mathcal{R}_1(F) = \mathcal{R}_1(F(s)) = 0 \), so suppose \( n \geq 2 \). For subspaces $U_1, U_2$ of a vector space $V$ over $\Q$ we denote by $U_1\wedge U_2$ the span of elements $u_1\wedge u_2$ for $u_1\in U_1$ and $u_2\in U_2$.  The vector space $\mathcal{R}_n(F(s))$ is spanned by elements $\Sp_{t\to 0}R-\Sp_{t\to  1} R$ for $R\in \mathcal{A}_n(F(s,t))$ such that $\delta(R)$ lies in 
\[
\sum_{k=1}^{n-1}\mathcal{A}_k(F(s,t))\wedge \mathcal{R}_{n-k}(F(s,t))\subseteq \bigwedge\nolimits^2 \mathcal{A}(F(s,t)).
\]
 We need to show that
\begin{equation}\label{FormualSpecializationLemmaResult}
\Sp_{s\to s_0}(\Sp_{t\to 0}R-\Sp_{t\to 1} R)\in \mathcal{R}_n(F)\,.
\end{equation}
By the induction hypothesis and \autoref{LemmaSpecializationCoproduct}, we conclude that
\[
\delta(\Sp_{t\to 0}R)=(\Sp_{t\to 0}\wedge \Sp_{t\to 0})(\delta(R))\in 
\sum_{k=1}^{n-1}\mathcal{A}_k(F(s))\wedge \mathcal{R}_{n-k}(F(s));
\]
a similar statement holds for $\delta(\Sp_{t\to 1} R)$.
Thus for any $s_1\in F$ we have 
\begin{equation}\label{FormulaSpecialzationInLemma}
\Sp_{s\to s_0}(\Sp_{t\to 0}R-\Sp_{t\to 1} R)-\Sp_{s\to s_1}(\Sp_{t\to 0}R-\Sp_{t\to 1} R)\in \mathcal{R}_n(F).
\end{equation}

Assume that 
\[
R=\sum_{i=1}^N n_i \llp f_0^i(s,t),\dots,f_n^i(s,t)\rrp  \,, \quad \text{with $f_{j}^i(s,t) \in F(s,t) $, $ n_i \in \mathbb{Q}$} \,,
\]
and write $R_i=\llp f_0^i(s,t),\dots,f_n^i(s,t)\rrp$ for the $i$-th summand.
Call a point $(s_1,t_1)\in \mathbb{A}^2$ {\it regular} for~$R$ if for every $i$ the rational map
$\mathbb{A}^2\longrightarrow \mathbb{P}^{n-1}$ sending $(s,t)$ to $[f_1^i(s,t)-f_0^i(s,t):\dots:f_n^i(s,t)-f_0^i(s,t)]$ is regular at $(s_1,t_1)$. The number of points at which $R$ is not regular is finite. For a regular point $(s_1,t_1)\in \mathbb{A}^2$ we can find polynomials  $A_1,\dots, A_n\in F[s,t]$ such that
\[
[f_1^i(s,t)-f_0^i(s,t):\dots:f_n^i(s,t)-f_0^i(s,t)]=[A_1(s,t):\dots:A_n(s,t)]
\]
and not all $A_j(s_1,t_1)$ vanish. Properties \ref{Atranslate} and \ref{Ascale} imply that
\[
\Sp_{s\to s_1} \Sp_{t\to t_1}R_i = \Sp_{t\to t_1} \Sp_{s\to s_1}R_i = \llp 0,A_1(s_1,t_1),\dots,A_n(s_1,t_1)\rrp \,.
\]

Since $F$ is infinite, we can find a point $s_1\in F$  such that points $(s_1,0)$ and $(s_1,1)$ are regular for $R$. By the induction hypothesis and \autoref{LemmaSpecializationCoproduct}, we have $\delta\Sp_{s\to s_1}(R)=0$, so
\[
\Sp_{s\to s_1}(\Sp_{t\to 0}(R)-\Sp_{t\to 1} (R))=\Sp_{t\to 0}(\Sp_{s\to s_1}(R))-\Sp_{t\to 1}(\Sp_{s\to s_1}(R))\in \mathcal{R}_n(F)\,.
\]
Thus,~\eqref{FormulaSpecialzationInLemma} implies \eqref{FormualSpecializationLemmaResult}. This finishes the proof of the Lemma.
\end{proof}

\begin{corollary}\label{LemmaRCoideal} The subspace $\mathcal{R}(F)$ is a Lie coideal in  $\mathcal{A}(F)$. 
\end{corollary}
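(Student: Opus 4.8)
The plan is to unwind the definition of a Lie coideal and reduce the statement to a one-line computation that feeds the two preceding lemmas into each other. Recall that, $\mathcal{A}(F)$ being a Lie coalgebra by \autoref{LemmaCorrelatorsAffineInvariance}, the subspace $\mathcal{R}(F)$ is a \emph{Lie coideal} precisely when $\delta\bigl(\mathcal{R}(F)\bigr)\subseteq \mathcal{R}(F)\wedge\mathcal{A}(F)+\mathcal{A}(F)\wedge\mathcal{R}(F)$, equivalently when the composite $\mathcal{R}(F)\hookrightarrow\mathcal{A}(F)\xrightarrow{\delta}\bigwedge^2\mathcal{A}(F)\twoheadrightarrow\bigwedge^2\mathcal{L}^\mathrm{f}(F)$ vanishes. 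Working in a fixed weight $n$, the kernel of this projection is exactly $\sum_{k=1}^{n-1}\mathcal{A}_k(F)\wedge\mathcal{R}_{n-k}(F)$ (antisymmetry of the wedge collapses the two-sided condition $\mathcal{R}\wedge\mathcal{A}+\mathcal{A}\wedge\mathcal{R}$ to this single span), so the goal reduces to showing
\[
\delta\bigl(\mathcal{R}_n(F)\bigr)\subseteq \sum_{k=1}^{n-1}\mathcal{A}_k(F)\wedge\mathcal{R}_{n-k}(F)\,,
\]
and it suffices to verify this on the spanning elements $\Sp_{t\to0}(R)-\Sp_{t\to1}(R)$ of $\mathcal{R}_n(F)$.

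Next I would take such a generator, so $R\in\mathcal{A}_n(F(t))$ with $\delta(R)\in\sum_{k=1}^{n-1}\mathcal{A}_k(F(t))\wedge\mathcal{R}_{n-k}(F(t))$ --- this being exactly the condition that the cobracket of $R$ vanishes in $\bigwedge^2\mathcal{L}^\mathrm{f}(F(t))$. Applying \autoref{LemmaSpecializationCoproduct} to the valuation $\nu_0$ on $F(t)$ gives $\delta\bigl(\Sp_{t\to0}(R)\bigr)=(\Sp_{t\to0}\wedge\Sp_{t\to0})\bigl(\delta(R)\bigr)$, and similarly for $t\to1$. The operator $\Sp_{t\to0}\wedge\Sp_{t\to0}$ sends the summand $\mathcal{A}_k(F(t))\wedge\mathcal{R}_{n-k}(F(t))$ into $\mathcal{A}_k(F)\wedge\Sp_{t\to0}\bigl(\mathcal{R}_{n-k}(F(t))\bigr)$, and by \autoref{LemmaSpecializationRelations} (applied with $s=t$, $s_0=0$, in weight $n-k<n$) we have $\Sp_{t\to0}\bigl(\mathcal{R}_{n-k}(F(t))\bigr)\subseteq\mathcal{R}_{n-k}(F)$. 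Hence $\delta\bigl(\Sp_{t\to0}(R)\bigr)\in\sum_{k}\mathcal{A}_k(F)\wedge\mathcal{R}_{n-k}(F)$, the same holds for $\Sp_{t\to1}(R)$, and subtracting yields the desired containment for the generator.

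The argument is entirely degreewise and introduces no induction of its own: the inductive content already lives inside the proofs of the two lemmas invoked. I do not expect a genuine obstacle here, since this is a formal corollary; the single nontrivial input is \autoref{LemmaSpecializationRelations}, that specialization preserves the relation spaces, and once that is granted the result follows from the compatibility of $\delta$ with specialization in \autoref{LemmaSpecializationCoproduct}. The one bookkeeping point worth stating carefully is the identification of $\ker\bigl(\bigwedge^2\mathcal{A}(F)\to\bigwedge^2\mathcal{L}^\mathrm{f}(F)\bigr)$ with $\sum_k\mathcal{A}_k(F)\wedge\mathcal{R}_{n-k}(F)$, which is exactly the form in which the conclusion of \autoref{LemmaSpecializationRelations} is delivered.
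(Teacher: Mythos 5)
Your proposal is correct and follows essentially the same route as the paper's own proof: commute $\delta$ with $\Sp_{t\to0}$ and $\Sp_{t\to1}$ via \autoref{LemmaSpecializationCoproduct}, then push the relation spaces through specialization via \autoref{LemmaSpecializationRelations} to land in $\sum_k\mathcal{A}_k(F)\wedge\mathcal{R}_{n-k}(F)$. The only difference is that you spell out the identification of this sum with the kernel of $\bigwedge^2\mathcal{A}(F)\to\bigwedge^2\mathcal{L}^\mathrm{f}(F)$, which the paper leaves implicit.
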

\begin{proof} 

By construction, $\mathcal{R}_n(F)$ is spanned by $\Sp_{t\to 0}R-\Sp_{t\to 1} R$ for $R\in \mathcal{A}_n(F(t))$ satisfying $\delta(R)\in \sum_{k=1}^{n-1} \mathcal{A}_k(F(t))\wedge \mathcal{R}_{n-k}(F(t))$. By \autoref{LemmaSpecializationCoproduct} and \autoref{LemmaSpecializationRelations}, we have 
\begin{align*}
\delta(\Sp_{t\to 0}R-\Sp_{t\to 1} R) & {} = (\Sp_{t\to 0}\otimes \Sp_{t\to 0})(\delta R)-(\Sp_{t\to 1} \otimes \Sp_{t\to 1}) (\delta R) \\
& {} \in \sum_{k=1}^{n-1} \mathcal{A}_k(F)\wedge \mathcal{R}_{n-k}(F)\,.
\end{align*}
This shows that  $\mathcal{R}(F)$ is a Lie coideal in $\mathcal{A}(F)$.
\end{proof} 

The Lie coalgebra of formal multiple polylogarithms is then the aforementioned quotient
\[
\mathcal{L}^\mathrm{f}_n(F)=\frac{\mathcal{A}_n(F)}{\mathcal{R}_n(F)}\,.
\]
It is spanned by correlators $\cor(x_0,\dots, x_n)$, whose cobracket is given by the formula
\begin{equation}\label{FormulaCoproductCorrelators}
\delta \cor(x_0,\dots, x_n)=\sum_{j=0}^n\sum_{i=1}^{n-1} \cor(x_{j}, x_{j+1}, \dots, x_{j+i})\wedge  \cor(x_{j},  x_{j+i+1}, \dots, x_{j+n})\,.
\end{equation}

\begin{corollary} Let $F$ be an infinite field and $s_0\in F$.
The specialization map $\Sp_{s\to s_0}\colon \mathcal{A}(F(s))\lra \mathcal{A}(F)$ descends to a well-defined morphism of Lie coalgebras
\begin{align}\label{FormulaSpecializationMap}
\Sp_{s\to s_0}\colon \mathcal{L}^\mathrm{f}(F(s))\longrightarrow \mathcal{L}^\mathrm{f}(F)\,.
\end{align}
\end{corollary}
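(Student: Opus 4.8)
The plan is to assemble this corollary directly from the two preceding lemmas, so that essentially no new computation is required; the substantive work has already been carried out in \autoref{LemmaSpecializationCoproduct} and \autoref{LemmaSpecializationRelations}. First I would recall that $\Sp_{s\to s_0}\colon \mathcal{A}(F(s))\to \mathcal{A}(F)$ is an honest $\Q$-linear map, and that by definition $\mathcal{L}^\mathrm{f}(F(s))=\mathcal{A}(F(s))/\mathcal{R}(F(s))$ and $\mathcal{L}^\mathrm{f}(F)=\mathcal{A}(F)/\mathcal{R}(F)$. To descend the map to the quotients it suffices to check that $\Sp_{s\to s_0}$ carries the space of relations into the space of relations, i.e. $\Sp_{s\to s_0}\bigl(\mathcal{R}_n(F(s))\bigr)\subseteq \mathcal{R}_n(F)$ for every $n$. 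This is exactly the content of \autoref{LemmaSpecializationRelations}. Hence $\Sp_{s\to s_0}$ induces a well-defined $\Q$-linear map on indecomposables, which I again denote $\Sp_{s\to s_0}\colon \mathcal{L}^\mathrm{f}(F(s))\to \mathcal{L}^\mathrm{f}(F)$, characterised by $\Sp_{s\to s_0}\circ q = q\circ \Sp_{s\to s_0}$, where $q$ denotes each of the two quotient projections.

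Next I would verify that this induced map intertwines the cobrackets. On the level of $\mathcal{A}$, \autoref{LemmaSpecializationCoproduct} gives the commutativity $\delta\circ\Sp_{s\to s_0}=(\Sp_{s\to s_0}\wedge\Sp_{s\to s_0})\circ\delta$. By \autoref{LemmaRCoideal} the subspace $\mathcal{R}$ is a Lie coideal, so the cobracket on $\mathcal{A}$ descends to the cobracket $\delta$ on $\mathcal{L}^\mathrm{f}$, compatibly with $q$ in the sense that $\delta\circ q=(q\wedge q)\circ\delta$. The desired identity $\delta\circ\Sp_{s\to s_0}=(\Sp_{s\to s_0}\wedge\Sp_{s\to s_0})\circ\delta$ on $\mathcal{L}^\mathrm{f}(F(s))$ then follows by a short diagram chase: lifting a class to $\mathcal{A}(F(s))$, I would push it through the two commuting squares (the one from \autoref{LemmaSpecializationCoproduct} and the descent square from \autoref{LemmaRCoideal}) and invoke the relation $(\Sp_{s\to s_0}\wedge\Sp_{s\to s_0})\circ(q\wedge q)=(q\wedge q)\circ(\Sp_{s\to s_0}\wedge\Sp_{s\to s_0})$, which is merely the exterior square of $\Sp_{s\to s_0}\circ q = q\circ\Sp_{s\to s_0}$.

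Since the induced map is $\Q$-linear and commutes with $\delta$, it is a morphism of Lie coalgebras, completing the argument. The only point requiring care is the bookkeeping on the exterior squares $\bigwedge^2$, namely that the two ways of reducing $\bigwedge^2\mathcal{A}$ to $\bigwedge^2\mathcal{L}^\mathrm{f}$ (first specialize, then project; or first project, then specialize) agree. Conceptually there is no genuine obstacle here: the essential input, that specialization preserves the relations, was already supplied by the inductive regular-point argument of \autoref{LemmaSpecializationRelations}, and the present statement is a formal consequence packaged together with \autoref{LemmaSpecializationCoproduct} and \autoref{LemmaRCoideal}.
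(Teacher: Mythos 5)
Your proposal is correct and follows exactly the paper's route: the paper's own proof is the one-line observation that the statement follows from \autoref{LemmaSpecializationRelations} and \autoref{LemmaRCoideal}, and you have simply spelled out the standard diagram chase (descent to the quotient plus compatibility with the cobracket via \autoref{LemmaSpecializationCoproduct}) that those references encapsulate. No gaps; your write-up is just a more explicit version of the same argument.
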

\begin{proof}
The statement follows from \autoref{LemmaSpecializationRelations} and \autoref{LemmaRCoideal}.
\end{proof}

\begin{question} Consider an arbitrary discrete valuation $\nu$ on $F$, with residue field $\mathrm{k}$. Is the  specialization map $\Sp_{\nu,\pi}\colon \mathcal{L}^\mathrm{f}_n(F)\lra \mathcal{L}^\mathrm{f}_n(\mathrm{k})$ well defined?
\end{question}

\begin{corollary} Let $F$ be an infinite field. The natural map
\[
i\colon \mathcal{L}^\mathrm{f}(F)\longrightarrow \mathcal{L}^\mathrm{f}(F(s))
\]
is injective and induces an isomorphism
\begin{equation}\label{FormulaMapH1}
i\colon H^1(\mathcal{L}^\mathrm{f}(F),\Q)_n \longrightarrow H^1(\mathcal{L}^\mathrm{f}(F(s)),\Q)_n\,, \qquad n \geq 2\,.
\end{equation}
\end{corollary}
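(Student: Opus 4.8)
The plan is to prove that, for any fixed $s_0\in F$, the specialization $\Sp_{s\to s_0}$ is a two-sided inverse of $i$ on $H^1$; injectivity of $i$ on all of $\mathcal{L}^\mathrm{f}(F)$ then drops out as a byproduct. Recall that in weight $n$ the group $H^1(\mathcal{L}^\mathrm{f}(F'),\Q)_n$ is the kernel of $\delta$ on $\mathcal{L}^\mathrm{f}_n(F')$, and that both $i$ and $\Sp_{s\to s_0}$ are morphisms of Lie coalgebras, hence commute with $\delta$ and restrict to these kernels. First I would check that $\Sp_{s\to s_0}\circ i=\mathrm{id}$ on $\mathcal{L}^\mathrm{f}(F)$: for a correlator $\cor(x_0,\dots,x_n)$ with all $x_j\in F$, every nonzero difference $x_i-x_j$ is a unit for $\nu_{s_0}$, so the defining recipe returns $\llp\overline{x}_0,\dots,\overline{x}_n\rrp=\llp x_0,\dots,x_n\rrp$ (the residue field being $F$). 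Thus $i$ has a left inverse, so it is injective, and the same identity holds after restricting to $H^1$.

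The crux is the opposite composite on cocycles, namely that $i\circ\Sp_{s\to s_0}=\mathrm{id}$ on $H^1(\mathcal{L}^\mathrm{f}(F(s)),\Q)_n$. Fix such a cocycle and lift it to $R=\sum_i n_i\llp f_0^i,\dots,f_n^i\rrp\in\mathcal{A}_n(F(s))$ with $\delta R=0$ in $\bigwedge^2\mathcal{L}^\mathrm{f}(F(s))$. Introduce a new variable $u$ and the $F$-embedding $\phi\colon F(s)\hookrightarrow F(s,u)$, $s\mapsto s_0+u(s-s_0)$, and set $P\coloneqq\phi(R)$, obtained by applying $\phi$ to each entry. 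The embedding $\phi$ factors as an isomorphism $F(s)\cong F\bigl(s_0+u(s-s_0)\bigr)$ followed by the standard inclusion of a field into its rational function field in $u$, so it is exactly the input used to make $i$ well defined; hence functoriality gives $\delta P=\phi(\delta R)=0$ in $\bigwedge^2\mathcal{L}^\mathrm{f}(F(s,u))$. Therefore $P$ is admissible in the definition of $\mathcal{R}_n(F(s))$ and, by the remark following that definition, $\Sp_{u\to1}(P)-\Sp_{u\to0}(P)\in\mathcal{R}_n(F(s))$.

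It remains to evaluate the two endpoints, and this is the step I expect to be the main obstacle. Since $\phi$ specializes to the identity $s\mapsto s$ at $u=1$, the point $u=1$ is regular for $P$ unless some term of $R$ has all entries equal; the regular-point description of specialization from \autoref{LemmaSpecializationRelations} then yields $\Sp_{u\to1}(P)=R$ after a \ref{Atranslate}-translation and an \ref{Ascale}-rescaling. At $u=0$ one has $\phi(s)\equiv s_0$, so I would extract the leading $u$-order of each entry $\phi(f_j^i)$: if the entries of a term collide at $s=s_0$ to order $m$, then substituting $s_0+u(s-s_0)$ turns the common factor $(s-s_0)^m\in F(s)^\times$ into the leading coefficient, and \ref{Ascale} removes it, leaving precisely $\llp\overline{y}_0,\dots,\overline{y}_n\rrp=\Sp_{s\to s_0}(R)$ included back into $\mathcal{A}_n(F(s))$; that is, $\Sp_{u\to0}(P)=i(\Sp_{s\to s_0}R)$. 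Matching these two leading-term extractions — one in $s$ at $s_0$, the other in $u$ at $0$ — and checking that the spurious scalar is absorbed by \ref{Ascale} is the technical heart, and it is exactly where $n\ge2$ is used. We then conclude $R-i(\Sp_{s\to s_0}R)\in\mathcal{R}_n(F(s))$, so $i\circ\Sp_{s\to s_0}=\mathrm{id}$ on $H^1$; together with the first paragraph this makes $i$ and $\Sp_{s\to s_0}$ mutually inverse on $H^1$. Finally, the hypothesis $n\ge2$ is genuinely necessary: in weight one $H^1$ is the whole space and $i$ becomes $F^\times_\Q\to F(s)^\times_\Q$, which is not surjective.
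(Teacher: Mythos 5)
Your argument is correct, and its first half (that $\Sp_{s\to s_0}\circ i=\id$, hence $i$ is injective and injective on $H^1$) coincides with the paper's; for surjectivity on $H^1$ you take a genuinely different route. The paper antisymmetrizes: it forms $R-R'\in\mathcal{L}^\mathrm{f}(F(s,t))$, where $R'$ is the image of $R$ under the swap $s\leftrightarrow t$, notes $\delta(R-R')=0$, and compares $\Sp_{t\to s}$ with $\Sp_{t\to 0}$; both endpoint evaluations are then immediate, since $\Sp_{t\to s}(R-R')=0$ (specialization at the generic point $t=s$ is plain substitution) and $\Sp_{t\to 0}R'=i(\Sp_{s\to 0}R)$ (as $R'$ has no $s$-dependence). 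Your straight-line homotopy $P=\phi(R)$ with $\phi(s)=s_0+u(s-s_0)$ and endpoints $u=1$, $u=0$ also works: the factorization $F(s,u)=F(\phi(s))(u)$ is legitimate because $s=s_0+(\phi(s)-s_0)/u$, so $\delta P=0$ follows from the same functoriality that makes $i$ well defined, and the key endpoint identity $\Sp_{u\to0}\circ\phi=i\circ\Sp_{s\to s_0}$ does hold termwise: writing $f_j-f_0=c_j(s-s_0)^{m_j}\bigl(1+O(s-s_0)\bigr)$ (a Laurent expansion, so poles at $s_0$ are covered), one gets $\nu_{u=0}\bigl(\phi(f_j-f_0)\bigr)=m_j$ with leading coefficient $c_j(s-s_0)^{m_j}$, and \ref{Ascale} removes the common factor $(s-s_0)^{\min_j m_j}$ exactly as you indicate, which is where $n\geq 2$ enters. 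What the paper's swap trick buys is that this leading-order bookkeeping disappears entirely; what your version buys is the explicit identity $i\circ\Sp_{s\to s_0}=\id$ on $H^1$ for every $s_0$, and a clean explanation of why weight $1$ fails. The one step you should write out in full is precisely the verification of $\Sp_{u\to0}\circ\phi=i\circ\Sp_{s\to s_0}$, which you rightly flag as the technical heart but leave as a sketch.
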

\begin{proof}
The map $\Sp_{s \to 0}\colon \mathcal{L}^\mathrm{f}(F(s)) \longrightarrow  \mathcal{L}^\mathrm{f}(F)$ satisfies an identity ${\Sp_{s \to 0}} \circ i = \Id$, so $i$ is injective. Next, $H^1(\mathcal{L}^\mathrm{f}(F),\Q)=\ker\bigl(\delta \colon \mathcal{L}^\mathrm{f}(F) \longrightarrow \bigwedge^2 \mathcal{L}^\mathrm{f}(F) \bigr)$, and similarly for $H^1(\mathcal{L}^\mathrm{f}(F(s)),\Q)$. It follows that the map  \eqref{FormulaMapH1} is injective. To see that it is surjective, take any element $R\in \mathcal{L}^\mathrm{f}(F(s))$ such that $\delta(R)=0$. Consider the element $R-R'\in \mathcal{L}^\mathrm{f}(F(s,t))$ where $R'$ is the image of $R$ under the automorphism of  $F(s,t)$ exchanging $s$ and $t$. We have $\delta(R-R')=0$, so $\Sp_{t\to s}(R-R')=\Sp_{t\to 0}(R-R')$. It follows that 
\[
R=\Sp_{t\to 0}R'=\Sp_{s\to 0}R\in \Im(i)\,. \qedhere
\]
\end{proof}

\subsection{Properties of correlators} \label{SectionCorrelatorProperties}

With this setup we now show some basic properties of correlators.

\begin{proposition}[Distribution relations] \label{LemmaDistributionCor} Let $N\in \mathbb{N}$ be such that $(N,\ch(F))=1$. Assume that $F$ contains all $N\!$-th roots of unity. Then we have
\begin{equation}\label{FormulaDistributionCor}
\cor(x_0^N,\dots,x_n^N)=\sum_{\zeta_1^N=1,\dots,\zeta_n^N=1}
\cor(x_0,\zeta_1 x_1,\dots,\zeta_{n}x_n)\,.
\end{equation}
\end{proposition}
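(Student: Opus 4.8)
The plan is to induct on the weight $n$, proving the relation simultaneously for every infinite field with $(N,\ch(F))=1$ containing the $N$-th roots of unity, so that the inductive hypothesis is available over the auxiliary function fields that appear. Write $D \coloneqq \cor(x_0^N,\dots,x_n^N) - \sum_{\zeta_1^N=1,\dots,\zeta_n^N=1}\cor(x_0,\zeta_1 x_1,\dots,\zeta_n x_n)$; the goal is $D=0$. For the base case $n=1$, \autoref{LemmaWeightOne} identifies $\mathcal{L}^\mathrm{f}_1(F)$ with $F^\times_\Q$ via $\cor(x_0,x_1)=\log(x_1-x_0)$, so the claim reduces to the factorization $\prod_{\zeta^N=1}(\zeta x_1 - x_0) = (-1)^{N+1}(x_1^N - x_0^N)$: applying $\log$, using additivity \ref{Alog}, and noting that $\log(-1)=0$ in $F^\times_\Q$ (since $-1$ is $2$-torsion) yields $\sum_\zeta \log(\zeta x_1 - x_0) = \log(x_1^N - x_0^N)$.

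For the inductive step ($n\ge 2$) I would first establish $\delta D = 0$. Relation \ref{Ascale} lets me symmetrize the sum, $\sum_{\zeta_1,\dots,\zeta_n}\cor(x_0,\zeta_1 x_1,\dots,\zeta_n x_n) = \tfrac1N S$ with $S \coloneqq \sum_{\zeta_0,\dots,\zeta_n}\cor(\zeta_0 x_0,\dots,\zeta_n x_n)$, because scaling a tuple by $\zeta_0^{-1}$ reindexes the inner sum. Applying the cobracket formula \eqref{FormulaCoproductCorrelators} to $S$, every term is a wedge of two sub-correlators that share only the cut-vertex $\zeta_j x_j$, so by bilinearity of $\wedge$ the sum over the remaining roots factors through the two sub-correlators separately. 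Summing each factor over its own (``private'') roots turns it, by the inductive distribution relation in weights $i$ and $n-i$, into $\cor(x_j^N,\dots,x_{j+i}^N)$ and $\cor(x_j^N,x_{j+i+1}^N,\dots,x_{j+n}^N)$ (using $(\zeta_j x_j)^N = x_j^N$); both are then independent of the shared root $\zeta_j$, so the residual sum over $\zeta_j$ contributes a factor $N$. This gives $\delta S = N\,\delta\cor(x_0^N,\dots,x_n^N)$, hence $\delta D=0$.

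To upgrade $\delta D = 0$ to $D=0$ I would use the rigidity isomorphism \eqref{FormulaMapH1}. Reducing to points in general position and using cyclic symmetry \ref{Acycle} to arrange $x_0\neq 0$, consider the one-parameter family $D(t) \coloneqq D(x_0, t x_1, \dots, t x_n) \in \mathcal{L}^\mathrm{f}_n(F(t))$. By the cobracket step it is $\delta$-closed, so by \eqref{FormulaMapH1} it lies in the image of $i$, i.e.\ it is constant, whence $\Sp_{t\to 1}D(t) = \Sp_{t\to 0}D(t)$. At $t=1$ the specialization is regular and returns $D$, while at $t\to 0$ every correlator degenerates to one of the form $\cor(a,0,\dots,0)$, which vanishes by \ref{Ascale} and \ref{Aone}; therefore $D=0$. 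The statement for arbitrary configurations then follows from the generic case through the specialization morphism \eqref{FormulaSpecializationMap}.

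The main obstacle is the cobracket computation: one must keep careful track of the cyclic indices in \eqref{FormulaCoproductCorrelators} and verify that, after the \ref{Ascale}-symmetrization, the separation into the private roots (collapsed by the inductive hypothesis) and the shared root $\zeta_j$ (producing the overall factor $N$) reproduces $\delta$ of the left-hand correlator exactly. The rigidity finish is then routine, although the boundary specializations at $t=0,1$ and the treatment of degenerate configurations (where correlators must be read through the specialization map rather than by naive substitution) require the standard care.
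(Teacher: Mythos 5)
Your proof is correct and follows essentially the same route as the paper's: induction on $n$, vanishing of the cobracket of the defect via the inductive hypothesis (your symmetrization over $\zeta_0,\dots,\zeta_n$ just makes explicit what the paper leaves as "easily implies"), and then rigidity via a one-parameter family whose specialization at one endpoint degenerates to $\cor(x_0^N,0,\dots,0)=0$. The only cosmetic difference is that the paper scales $x_0$ by $t$ and compares $\Sp_{t\to1}$ with $\Sp_{t\to\infty}$, which by \ref{Ascale} is the same deformation as yours up to the substitution $t\mapsto 1/t$.
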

\begin{proof}
We argue by induction on $n$. For $n=1$ the claim follows from the identity $x^N-y^N={\prod_{\zeta^N=1}(x-\zeta y)}$. For $n>1$ the induction assumption easily implies that
\[
\delta\Big(\cor(t^Nx_0^N,x_1^N,\dots,x_n^N)-\sum_{\zeta_1^N=1,\dots,\zeta_n^N=1}\cor(tx_0,\zeta_1x_1,\dots,\zeta_{n}x_n)\Big)=0\,.
\]
We thus get an element \( R = \cor(t^Nx_0^N,x_1^N,\dots,x_n^N)-\sum_{\zeta_1^N=1,\dots,\zeta_n^N=1}\cor(tx_0,\zeta_1x_1,\dots,\zeta_{n}x_n) \) which is in the kernel of \( \delta \) mapping from \( \mathcal{L}^\mathrm{f}_n(F(t)) \).
Using \ref{Acycle} if needed, we may assume that $x_0\ne 0$, and consider $\Sp_{t\to 1}R-\Sp_{t\to\infty}R\in \mathcal{R}_n(F)$. We have 
\[\Sp_{t\to \infty}R=\cor(x_0^N,0,\dots,0)-\sum_{\zeta_1^N=1,\dots,\zeta_n^N=1}\cor(x_0,0,\dots,0)=0\]
by \ref{Aone} and \ref{Ascale}. Therefore, 
$\Sp_{t\to 1}R\in \mathcal{R}_n(F)$, which is exactly the claimed 
identity.
\end{proof}

A similar proof gives the following.
\begin{proposition} \label{PropositionFrobenius}
Assume that $F$ is a field of characteristic $p$. Then  
\[\cor(x_0^p,\dots,x_n^p) = p^n\cor(x_0,\dots,x_n).\]
\end{proposition}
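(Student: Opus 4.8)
The plan is to mimic the proof of the distribution relations (\autoref{LemmaDistributionCor}), replacing the $N$-th power map by the Frobenius $x \mapsto x^p$ in characteristic $p$, and to induct on the weight $n$. The essential arithmetic input is that over a field of characteristic $p$, the Frobenius is \emph{purely inseparable}: the identity $x^p - y^p = (x-y)^p$ replaces the factorization $x^N - y^N = \prod_{\zeta^N=1}(x-\zeta y)$ used in the distribution case. This is what will produce the factor $p^n$.

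First I would treat the base case $n=1$. By \autoref{LemmaWeightOne}, $\cor(x_0,x_1) = \log(x_1-x_0)$ and $u\bigl(\cor(x_0^p,x_1^p)\bigr) = x_1^p - x_0^p = (x_1-x_0)^p$ in $F^\times_\Q$, which equals $p\cdot(x_1-x_0)$ since multiplication by $p$ on $F^\times_\Q = \Q \otimes_\Z F^\times$ corresponds to raising to the $p$-th power. Hence $\cor(x_0^p,x_1^p) = p\,\cor(x_0,x_1)$, matching $p^n$ for $n=1$. Next, for the inductive step I would form the element
\[
R = \cor(t^p x_0^p, x_1^p, \dots, x_n^p) - p^n \cor(t x_0, x_1, \dots, x_n) \in \mathcal{L}^\mathrm{f}_n(F(t))\,,
\]
and verify that $\delta(R)=0$ using \eqref{FormulaCoproductCorrelators} together with the induction hypothesis applied to each pair of lower-weight correlators appearing in the cobracket; here one uses that the cobracket is multiplicative in the sense that each term splits into weight-$i$ and weight-$(n-i)$ factors, each of which picks up a $p^i$ and $p^{n-i}$, combining to the overall $p^n$. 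One may assume $x_0 \neq 0$ after applying \ref{Acycle} if necessary.

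Having checked $\delta(R)=0$, the element $\Sp_{t\to 1}R - \Sp_{t\to\infty}R$ lies in $\mathcal{R}_n(F)$ by the defining construction of the relations (\autoref{SectionRelations}), so $R$ descends to the statement after evaluating the two specializations. I would compute $\Sp_{t\to\infty}R$: the leading behaviour as $t\to\infty$ sends $(t^p x_0^p, x_1^p,\dots,x_n^p)$ to $(x_0^p,0,\dots,0)$ and $(tx_0, x_1,\dots,x_n)$ to $(x_0,0,\dots,0)$, both of which vanish by \ref{Aone} together with \ref{Ascale}, giving $\Sp_{t\to\infty}R = 0$. Hence $\Sp_{t\to 1}R \in \mathcal{R}_n(F)$, and $\Sp_{t\to 1}R = \cor(x_0^p,\dots,x_n^p) - p^n\cor(x_0,\dots,x_n)$ is precisely the claimed identity projected to $\mathcal{L}^\mathrm{f}_n(F)$.

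The main obstacle is verifying $\delta(R)=0$ cleanly: one must track how the inductive factors of $p$ assemble. The cobracket of $\cor(t^p x_0^p,\dots)$ produces pairs $\cor(\dots)\wedge\cor(\dots)$ of weights summing to $n$, and after applying the induction hypothesis to \emph{each} factor one gets a net factor $p^i \cdot p^{n-i} = p^n$, which must match the $p^n$ multiplying $\cor(tx_0, x_1,\dots,x_n)$ term by term. The bookkeeping is routine once one observes that the $t$-dependence and the homogeneity under \ref{Ascale} make the two cobracket expansions formally identical after extracting $p^n$; this is exactly analogous to the distribution-relation computation, where the roots of unity are simply replaced by the single (repeated) factor coming from inseparability.
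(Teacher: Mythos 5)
Your proof is correct and follows exactly the route the paper intends: the paper's entire proof of this proposition is the single sentence ``A similar proof gives the following,'' pointing back to the distribution-relation argument of \autoref{LemmaDistributionCor}, and your write-up is precisely that adaptation (base case via $x^p-y^p=(x-y)^p$ in $F^\times_\Q$, the auxiliary element $R$ over $F(t)$ with $\delta(R)=0$ by the inductive $p^i\cdot p^{n-i}=p^n$ bookkeeping, and the specialization $\Sp_{t\to1}R-\Sp_{t\to\infty}R\in\mathcal{R}_n(F)$ with the $t\to\infty$ term killed by \ref{Ascale} and \ref{Aone}), with the details the paper omits filled in correctly.
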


Although not imposed as a relation in \autoref{SectionAn}, one can show that correlators satisfy a reversal symmetry. Hence overall the correlators are dihedrally symmetric.

\begin{proposition}\label{lem:correv}
    The following reversal symmetry holds, for \( n \geq 1 \),
    \begin{equation}\label{eqn:correv}
        \cor(x_0,x_1,\ldots,x_{n-1}, x_n) = (-1)^{n+1} \cor(x_n,x_{n-1}, \ldots, x_1, x_0) \,.
    \end{equation}
\end{proposition}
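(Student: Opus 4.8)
The plan is to mimic the proof of \autoref{LemmaDistributionCor}: I will produce an element $R \in \mathcal{A}_n(F(t))$ lying in the kernel of the cobracket on $\mathcal{L}^\mathrm{f}(F(t))$, one of whose specializations is the reversal combination in \eqref{eqn:correv} and the other of which vanishes. The argument is by induction on $n$, carried out for all infinite fields simultaneously. The base case $n = 1$ is exactly the cyclic relation \ref{Acycle}. I may discard the degenerate case in which all $x_i$ are equal, where both sides of \eqref{eqn:correv} vanish by \ref{Atranslate} and \ref{Azero}. Finally, applying \ref{Acycle} to both correlators shows that \eqref{eqn:correv} for a tuple is equivalent to \eqref{eqn:correv} for any cyclic rotation of it, so I may assume $x_n \neq 0$.

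The first and main step is a cobracket identity. Writing $\bar x = (x_n, x_{n-1}, \ldots, x_0)$, I would prove that, over any infinite field,
\[
\delta\cor(x_n, \ldots, x_0) = (-1)^{n+1}\,\delta\cor(x_0, \ldots, x_n) \quad \text{in } \textstyle\bigwedge^2 \mathcal{L}^\mathrm{f}(F)\,.
\]
Starting from \eqref{FormulaCoproductCorrelators} applied to $\cor(\bar x)$, each of the two factors $\cor(\bar x_j, \ldots, \bar x_{j+i})$ and $\cor(\bar x_j, \bar x_{j+i+1}, \ldots, \bar x_{j+n})$ is itself a reversed correlator, of weight $i$ and $n - i$ respectively, both strictly less than $n$. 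The induction hypothesis replaces them by the un-reversed correlators at the cost of the signs $(-1)^{i+1}$ and $(-1)^{n-i+1}$, whose product is $(-1)^n$. Swapping the two wedge factors contributes another sign, the substitution $i \mapsto n - i$ exchanges the two summation ranges, and \ref{Acycle} aligns the cyclic index blocks; together these turn the sum into $(-1)^{n+1}$ times \eqref{FormulaCoproductCorrelators} for $\cor(x)$. As a consequence, $\delta\bigl(\cor(x) - (-1)^{n+1}\cor(\bar x)\bigr) = 0$ in $\bigwedge^2\mathcal{L}^\mathrm{f}(F)$.

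For the second step, set
\[
R = \cor(x_0, \ldots, x_{n-1}, t x_n) - (-1)^{n+1}\cor(t x_n, x_{n-1}, \ldots, x_0) \in \mathcal{A}_n(F(t))\,.
\]
This is the reversal combination for the tuple $(x_0, \ldots, x_{n-1}, t x_n)$ over $F(t)$, so by the first step its cobracket vanishes in $\bigwedge^2\mathcal{L}^\mathrm{f}(F(t))$, making it eligible in the definition of $\mathcal{R}_n(F)$. Using \ref{Ascale} to divide every entry by $t$ and then letting $t \to \infty$, both correlators specialize to the shape $\cor(\ast, 0, \ldots, 0)$, which is $0$ by \ref{Ascale} and \ref{Aone}; hence $\Sp_{t\to\infty}R = 0$. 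Since not all $x_i$ coincide, the specialization $\Sp_{t\to1}R$ is the naive substitution $t = 1$, namely $\cor(x_0, \ldots, x_n) - (-1)^{n+1}\cor(x_n, \ldots, x_0)$. By the remark following the definition of $\mathcal{R}_n(F)$, which permits replacing the specialization endpoints $0, 1$ by any pair including $\infty$, we obtain $\Sp_{t\to1}R - \Sp_{t\to\infty}R \in \mathcal{R}_n(F)$, which is exactly \eqref{eqn:correv} in $\mathcal{L}^\mathrm{f}_n(F)$.

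The only delicate point is the cobracket identity of the first step: one must track the index shifts carefully and check that the reversal of each sub-correlator, combined with \ref{Acycle}, reorganizes the double sum correctly. A quick check in weight $n = 2$, where $\delta\cor(x_2,x_1,x_0) = -\delta\cor(x_0,x_1,x_2)$ follows at once from the weight-one symmetry and the antisymmetry of $\wedge$, confirms the shape of the signs. The remaining ingredients, the two specialization computations and the reductions, are routine consequences of \autoref{LemmaSpecializationCoproduct} and the relations \ref{Acycle}--\ref{Aone}.
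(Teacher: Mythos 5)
Your proof is correct and follows essentially the same route as the paper's: establish the cobracket identity $\delta\cor(x_n,\ldots,x_0) = (-1)^{n+1}\delta\cor(x_0,\ldots,x_n)$ by induction using \eqref{FormulaCoproductCorrelators}, then deform one endpoint by $t$ and take the difference of the specializations at $t=1$ and $t=\infty$. The only differences are cosmetic: you attach $t$ to $x_n$ rather than $x_0$, and you spell out the reduction (via \ref{Acycle} and \ref{Atranslate}) to a nonzero deformed entry, which the paper leaves implicit.
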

\begin{proof}
    We argue by induction on \( n \).  For \( n = 1 \), this is trivial by the cyclic symmetry of the correlators, namely \( \cor(x_0,x_1) = \cor(x_1,x_0) \).  For \( n > 1 \), we compute using~\eqref{FormulaCoproductCorrelators}
    \begin{align*}
    \delta \cor(x_n,\dots, x_0)
    &=\sum_{i=1}^{n-1}\sum_{j=0}^n \cor(x_{n-j}, x_{n-j-1}, \dots, x_{n-j-i})\wedge  \cor(x_{n-j},  x_{n-j-i-1}, \dots, x_{n-j-n})\\
    &=(-1)^n\sum_{i=1}^{n-1}\sum_{j=0}^n \cor(x_{n-j-i}, \dots, x_{n-j-1}, x_{n-j})\wedge  \cor(x_{n-j-n}, \dots, x_{n-j-i-1}, x_{n-j})\\
    &=(-1)^n\sum_{i'=1}^{n-1}\sum_{j'=0}^n \cor(x_{j'-(n-i')}, \dots, x_{j'-1}, x_{j'})\wedge  \cor(x_{j'-n}, \dots, x_{j'-(n-i')-1}, x_{j'})\\
    &=(-1)^n\sum_{i'=1}^{n-1}\sum_{j'=0}^n \cor(x_{j'+i'+1}, \dots, x_{j'+n}, x_{j'})\wedge  \cor(x_{j'+1}, \dots, x_{j'+i'}, x_{j'})\\
    &=(-1)^{n+1} \delta \cor(x_0,\dots, x_n)\,,
    \end{align*}
    where in the second equality we used the induction assumption, in the second we changed summation indices to $j'=n-j$ and $i'=n-i$, in the third we shifted all indices by $(n+1)$, and in the last equality we interchanged the wedge factors and used cyclic symmetry. Therefore,
    \[
        \delta \Big(  \cor(x_0 t, x_1,\ldots,x_{n-1}, x_n) - (-1)^{n+1} \cor(x_n, x_{n-1},\ldots,x_1,x_0 t) \Big) = 0 \,.
    \]
    We thus get an element \( R = \cor(x_0 t, x_1,\ldots,x_{n-1}, x_n) - (-1)^{n+1} \cor(x_n, x_{n-1},\ldots,x_1,x_0 t) \), which is in the kernel of \( \delta \) mapping from \( \mathcal{L}^\mathrm{f}_n(F(t)) \).  Computing the specialization \( \Sp_{t\to1} R - \Sp_{t\to\infty} R \) gives the claimed identity.
\end{proof}

Finally, we have the shuffle relations for correlators. 

\begin{proposition}[Shuffle relations]\label{LemmaShuffleCor} For \( n_1, n_2 \geq 1 \), the following relation holds, 
\begin{equation}\label{FormulaShuffleCor}
\sum_{\sigma \in \Sigma_{n_1,n_2}} \cor(x_0,x_{\sigma(1)},\dots,x_{\sigma(n_1+n_2)})=0\,,
\end{equation}
where $\Sigma_{n_1,n_2}\subseteq \mathfrak{S}_{n_1+n_2}$ is the set of $(n_1,n_2)$-shuffles, i.e., permutations $\sigma \in \mathfrak{S}_{n_1+n_2}$ such that $\sigma^{-1}(1)<\dots<\sigma^{-1}(n_1)$ and $\sigma^{-1}(n_1+1)<\dots<\sigma^{-1}(n_1+n_2)$.
\end{proposition}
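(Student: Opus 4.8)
The plan is to argue by induction on the weight $n = n_1 + n_2$, following the same template as the preceding propositions in this section: first show that the cobracket of the shuffle sum vanishes, then deform by a parameter and read off the relation from a specialization. Write $S = \sum_{\sigma\in\Sigma_{n_1,n_2}}\cor(x_0,x_{\sigma(1)},\dots,x_{\sigma(n)})$ for the left-hand side of~\eqref{FormulaShuffleCor}. For the base case $n=2$ we necessarily have $n_1=n_2=1$, so $S = \cor(x_0,x_1,x_2)+\cor(x_0,x_2,x_1)$; this vanishes by combining the reversal symmetry of \autoref{lem:correv} with the cyclic symmetry \ref{Acycle}, since $\cor(x_0,x_2,x_1)=\cor(x_2,x_1,x_0)=-\cor(x_0,x_1,x_2)$.

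The heart of the argument is to show that $\delta S = 0$ in $\bigwedge^2\mathcal{L}^\mathrm{f}(F)$. I would expand $\delta S$ using the cobracket formula~\eqref{FormulaCoproductCorrelators}, which writes each summand as a sum over cuts of the cyclic word into two arcs meeting at a common vertex. Grouping the resulting terms by the combinatorial type of the cut and then summing over $\sigma$, the two correlator factors produced by a fixed cut become shuffle sums of the two sub-blocks of $(x_1,\dots,x_{n_1})$ and $(x_{n_1+1},\dots,x_n)$ lying on each arc. Whenever such a factor is a shuffle of two nonempty sequences, its weight is strictly less than $n$ and it vanishes by the induction hypothesis. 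The delicate terms are those in which the distinguished vertex $x_0$ lies in the \emph{interior} of an arc rather than at the cut vertex: there the two sequences wrap around $x_0$ and the factor need not be a genuine shuffle sum. I expect these boundary terms to cancel in pairs, matched by the cyclic symmetry \ref{Acycle}, in the same spirit as the pairing $T[i,j]+T[n-i,j+i]=0$ used in the proof of \autoref{LemmaSpecializationCoproduct}. (I have verified the complete cancellation directly in the first nontrivial case $n_1=2$, $n_2=1$, where it holds already on the nose after using the total antisymmetry of weight-two correlators, which is itself the weight-two instance of the relation being proved.)

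With $\delta S = 0$ in hand I would conclude by deformation and specialization. By the translation relation \ref{Atranslate} the identity~\eqref{FormulaShuffleCor} is unchanged under replacing every $x_i$ by $x_i + b$, so we may assume $x_0\ne 0$. Substituting $x_0\mapsto x_0 t$ in $S$ produces an element $R = \sum_{\sigma\in\Sigma_{n_1,n_2}}\cor(x_0 t, x_{\sigma(1)},\dots,x_{\sigma(n)})\in\mathcal{A}_n(F(t))$; since the proof that $\delta S=0$ is valid over any field and for arbitrary values of the arguments, we still have $\delta R = 0$ in $\bigwedge^2\mathcal{L}^\mathrm{f}(F(t))$. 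Hence $\Sp_{t\to 1}R - \Sp_{t\to\infty}R\in\mathcal{R}_n(F)$ by the definition of the space of relations together with the Remark following it. Now $\Sp_{t\to 1}R = S$, whereas in $\Sp_{t\to\infty}R$ the coordinate $x_0 t$ dominates, so each summand specializes to $\cor(x_0,0,\dots,0)$, which vanishes by \ref{Ascale} and \ref{Aone} (using $n\geq 2$). Therefore $S\in\mathcal{R}_n(F)$, that is, $S = 0$ in $\mathcal{L}^\mathrm{f}_n(F)$, as claimed.

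The main obstacle is entirely concentrated in the cobracket computation of the second paragraph, and specifically in the bookkeeping of the wrap-around terms where $x_0$ sits in the interior of an arc; the deformation and specialization steps of the third paragraph are routine given \autoref{LemmaSpecializationCoproduct} and the structure of $\mathcal{R}_n(F)$.
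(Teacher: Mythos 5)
Your proposal follows the paper's proof route exactly: induction on the weight $n=n_1+n_2$, the base case $n=2$ from reversal symmetry (\autoref{lem:correv}) combined with cyclic symmetry \ref{Acycle}, the claim that $\delta$ annihilates the shuffle sum modulo the induction hypothesis, and then the deformation $x_0\mapsto x_0t$ with $\Sp_{t\to1}R-\Sp_{t\to\infty}R\in\mathcal{R}_n(F)$ and $\Sp_{t\to\infty}R=0$ by \ref{Ascale} and \ref{Aone}. Your base case and specialization endgame are complete and correct (indeed more explicit than the paper's, whose base-case reference is garbled in the source). The one step you leave open --- the full combinatorial verification that every term of $\delta S$ either contains a lower-weight shuffle factor or cancels, including the wrap-around terms where $x_0$ sits in the interior of an arc --- is precisely the step the paper itself does not prove but delegates to the proof of Theorem 4.3 in \cite[pp.~437--438]{Gon01B}; your description of the expected cancellation is consistent with that argument, but as written it is verified only for $(n_1,n_2)=(2,1)$, so to make the proof self-contained you must either carry out that bookkeeping in general or cite Goncharov as the paper does.
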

\begin{proof}
As in \autoref{lem:correv} the proof goes by induction on $n=n_1+n_2$. The base of induction $n=2$ follows from . For the combinatorial part, namely the fact that~\eqref{FormulaShuffleCor} is annihilated by $\delta$, see the proof of Theorem 4.3 in \cite[p.437--438]{Gon01B}.
\end{proof}

\section{Construction of \texorpdfstring{$\mathcal{H}^\mathrm{f}(F)$}{H\textasciicircum{}f\_bullet(F)}}
In this section we construct the Hopf algebra $\mathcal{H}^\mathrm{f}(F)$. This Hopf algebra is defined abstractly as a universal coenveloping coalgebra of $\mathcal{L}^\mathrm{f}(F)$, but it also comes with a distinguished collection of generators, the iterated integrals, and we will begin by defining these elements on the level of the Lie coalgebra $\mathcal{L}^\mathrm{f}(F)$ using correlators.

\subsection{Iterated integrals}\label{sec:IteratedIntegrals}

We start by defining iterated integrals via correlators, motivated by~\eqref{FormulaIntegralsCorrelators}, and show how to compute their coproduct.  

\begin{definition} \label{def:IteratedIntegralsViaCorrelators}
Consider $x_0,\dots,x_{n+1} \in F$ for $n\geq 0$. We define the iterated integral, as an element in the Lie coalgebra $\mathcal{L}^\mathrm{f}(F)$, by the formula:
\[
			\IL(x_0; x_1,\ldots,x_n; x_{n+1}) \coloneqq  \cor(x_1,\ldots,x_n,x_{n+1}) - \cor(x_0, x_1,\ldots, x_n) \in \mathcal{L}^\mathrm{f}_{n}(F) \,.
\]
By convention, we set \( \IL(x_0; x_1) = 0 \).

\end{definition}

 We can now give a precise meaning to the informal viewpoint that correlators are iterated integrals with lower bound equal to \( \infty \).

\begin{corollary} \label{cor:IisCorinf}
    The following identity holds
    \[
        \Sp_{x_0 \to \infty} \IL(x_0; x_1,\ldots,x_n; x_{n+1}) = \cor(x_1,\ldots,x_n, x_{n+1}) \,.
    \]
\begin{proof}
By definition of specialization $\Sp_{x_0\to\infty}\cor(x_0,x_1,\dots,x_n) = \cor(1,0,\dots,0)=0$, and hence the claim follows.
\end{proof}
\end{corollary}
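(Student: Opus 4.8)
The plan is to apply the specialization map $\Sp_{x_0\to\infty}$ termwise to the defining expression $\IL(x_0;x_1,\ldots,x_n;x_{n+1}) = \cor(x_1,\ldots,x_n,x_{n+1}) - \cor(x_0,x_1,\ldots,x_n)$, working in the field $F(x_0)$ with $x_0$ regarded as the variable over the base $F$. The first summand $\cor(x_1,\ldots,x_n,x_{n+1})$ has all entries in $F$, hence is a constant element of $\mathcal{L}^\mathrm{f}(F(x_0))$ coming from $\mathcal{L}^\mathrm{f}(F)$, and its specialization is itself. Thus the entire content of the corollary reduces to showing that the second summand specializes to zero.

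To evaluate $\Sp_{x_0\to\infty}\cor(x_0,x_1,\ldots,x_n)$, I would unwind the definition of specialization for the valuation $\nu_\infty$ with uniformizer $\pi = 1/x_0$, so that $\nu_\infty(x_0) = -1$ while $\nu_\infty(x_i) \geq 0$ for the constants $x_1,\ldots,x_n \in F$. The differences involving $x_0$ then have valuation $-1$, whereas the differences among the remaining entries are nonnegative, so the largest $s$ for which all entries are congruent modulo $\pi^s$ is $s = -1$. Writing $x_i = y_i\pi^s + a$ with $a = 0$ gives $y_0 = 1$ and $y_i = x_i/x_0$ for $i \geq 1$, all lying in $\mathcal{O}$, with residues $\overline{y}_0 = 1$ and $\overline{y}_i = 0$.

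Consequently $\Sp_{x_0\to\infty}\cor(x_0,x_1,\ldots,x_n) = \cor(1,0,\ldots,0)$, which vanishes by relation \ref{Aone}. Combining this with the observation that the first summand is unaffected yields the claimed identity. There is no serious obstacle here; the only point requiring care is the bookkeeping of the valuation at infinity --- in particular recognizing that $s = -1$ rather than $s = 0$, so that the uniformizer rescaling sends the single ``large'' coordinate $x_0$ to $1$ and all the others to $0$, landing precisely on the configuration killed by \ref{Aone}.
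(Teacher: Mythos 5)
Your proposal is correct and follows the same route as the paper: apply \( \Sp_{x_0\to\infty} \) to the defining difference of correlators, note the constant term is fixed, and compute \( \Sp_{x_0\to\infty}\cor(x_0,x_1,\ldots,x_n)=\cor(1,0,\ldots,0)=0 \) via \ref{Aone}. The paper simply asserts this last computation "by definition of specialization," whereas you carry out the valuation bookkeeping (identifying \( s=-1 \) and the residues \( 1,0,\ldots,0 \)) explicitly — correctly so.
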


\begin{proposition}\label{prop:corasint}
The correlator can be expressed via iterated integrals as follows
\begin{align*}
    \cor(x_0, x_1,\ldots,x_n) 
    & = \sum_{i=0}^{n} \IL(0; \overbrace{0,\ldots,0}^{i}, x_0, x_1,\ldots,x_{n-1-i};x_{n-i}) \\
    & = \IL(0; x_0, \ldots, x_{n-1}; x_n) + \IL(0; 0, x_0, \ldots, x_{n-2}; x_{n-1}) \\
    & \quad +  \IL(0; 0, 0, x_0, \ldots, x_{n-3}; x_{n-2}) + \cdots + \IL(0; 0,\ldots,0;x_0) \,.
\end{align*}
\begin{proof}
    When expressed in terms of correlators using \autoref{def:IteratedIntegralsViaCorrelators} the terms on the right-hand side cancel in pairs, leaving \( \cor(x_0,x_1,\ldots,x_n) - \cor(0,\ldots,0) = \cor(x_0,x_1,\ldots,x_n)\) by \ref{Azero}.
\end{proof}
\end{proposition}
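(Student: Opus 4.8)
The plan is to unfold each iterated integral appearing on the right-hand side using \autoref{def:IteratedIntegralsViaCorrelators} and to observe that the resulting correlators telescope.

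First I would introduce the shorthand
\[
C_i \coloneqq \cor(\,\underbrace{0,\ldots,0}_{i},\, x_0, x_1, \ldots, x_{n-i}\,)\,, \qquad 0 \leq i \leq n+1\,,
\]
where for \( i = n+1 \) the tuple carries no \( x \)-entries and \( C_{n+1} = \cor(0,\ldots,0) \). Each \( C_i \) has \( n+1 \) arguments and hence lies in \( \mathcal{L}^\mathrm{f}_n(F) \). I would then apply the definition of the iterated integral to the \( i \)-th summand: the term \( \IL(0; \underbrace{0,\ldots,0}_{i}, x_0, \ldots, x_{n-1-i}; x_{n-i}) \) unfolds as the difference of \( \cor(\underbrace{0,\ldots,0}_{i}, x_0, \ldots, x_{n-1-i}, x_{n-i}) \) and \( \cor(0, \underbrace{0,\ldots,0}_{i}, x_0, \ldots, x_{n-1-i}) \). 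The first correlator drops the lower bound and appends the upper bound \( x_{n-i} \) to the middle word, giving exactly \( C_i \); the second drops the upper bound and prepends the lower bound \( 0 \) to the middle word, giving exactly \( C_{i+1} \). So the \( i \)-th term equals \( C_i - C_{i+1} \) directly, with no reshuffling of arguments required, and I would verify the boundary cases \( i = 0 \) and \( i = n \) to confirm the index conventions are consistent (the top term contributes \( C_n - C_{n+1} \) with an empty \( x \)-segment in the integrand).

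Finally the sum collapses by telescoping: \( \sum_{i=0}^{n}(C_i - C_{i+1}) = C_0 - C_{n+1} \), where \( C_0 = \cor(x_0, x_1, \ldots, x_n) \) is the desired left-hand side while \( C_{n+1} = \cor(0,\ldots,0) = 0 \) by relation \ref{Azero}. The only genuine work is the index bookkeeping in matching each iterated integral to the pair \( (C_i, C_{i+1}) \); once that alignment is set up correctly no relation beyond \ref{Azero} is needed, so I do not anticipate a real obstacle here.
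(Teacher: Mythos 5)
Your proof is correct and follows the same route as the paper: unfolding each \( \IL \) term via \autoref{def:IteratedIntegralsViaCorrelators} and observing the pairwise (telescoping) cancellation down to \( \cor(x_0,\ldots,x_n) - \cor(0,\ldots,0) \), with \ref{Azero} killing the last term. Your version merely makes the index bookkeeping explicit via the \( C_i \) notation, which the paper leaves implicit.
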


Using the formula for the cobracket of a correlator given in \eqref{FormulaCoproductCorrelators}, one can derive a formula for the cobracket of the iterated integral, as defined above.

\begin{proposition}\label{lem:intascor}
		The following equality holds
\begin{align*}
&\delta \IL(x_0; x_1,\ldots,x_n; x_{n+1}) 
				&  = \!\!\! \sum_{0 \leq i < j \leq n+1} \!\!\! \IL(x_0; x_1,\ldots,x_i, x_j,\ldots, x_n; x_{n+1}) \wedge \IL(x_i; x_{i+1}, \ldots, x_{j-1}; x_j)  \,.
\end{align*}

\begin{proof} By expanding out the right hand side, using \autoref{def:IteratedIntegralsViaCorrelators}, we have
    \begin{align*}
    &\sum_{0 \leq i < j \leq n+1} \IL(x_0; x_1,\ldots,x_i, x_j,\ldots, x_n; x_{n+1}) \wedge \IL(x_i; x_{i+1}, \ldots, x_{j-1}; x_j)  \\			
       &  = \sum_{0 \leq i < j \leq n+1} \begin{aligned}[t] 
            \big(\cor&(x_1,\ldots,x_i, x_j,\ldots, x_n, x_{n+1}) - \cor(x_0, x_1,\ldots,x_i, x_j,\ldots, x_n)\big) \\
            & \wedge \big(\cor(x_{i+1}, \ldots, x_{j-1}, x_j) - \cor(x_i, x_{i+1}, \ldots, x_{j-1}) \big) \,. \end{aligned}
    \end{align*}
    (For $j=n+1$ we interpret $\cor(x_0, x_1,\ldots,x_i, x_j,\ldots, x_n)$ as $\cor(x_0, x_1,\ldots,x_i)$ and similarly for the term $\cor(x_1,\ldots,x_i, x_j,\ldots, x_n, x_{n+1})$ when $i=0$.)
    Rewrite the last sum as follows
    {
    \begin{align*}
        & = \begin{aligned}[t]
        & \sum_{1 \leq p < q \leq n} A_{p,q} \wedge \cor(x_p, x_{p+1},\ldots, x_{q}) \\[-0.5ex]
        & {} + \sum_{1 \leq q \leq n} \big( {-}\cor(x_q, x_{q+1}, \ldots, x_{n+1}) + \cor(x_0, x_q, x_{q+1},\ldots,x_n) \big) \wedge \cor(x_0, x_1,\ldots,x_q)  \\[-0.5ex]
        & {} + \sum_{1 \leq p \leq n} \big( \cor(x_1, x_2, \ldots, x_p, x_{n+1}) - \cor(x_0, x_1, \ldots,x_p) \big) \wedge \cor(x_p, x_{p+1},\ldots,x_{n+1}) 
        \end{aligned} \\[2ex]
         & = \begin{aligned}[t]
        & \sum_{1 \leq p < q \leq n} A_{p,q} \wedge \cor(x_p, x_{p+1},\ldots, x_{q}) \\[-0.5ex]
        & {} + \sum_{1 \leq q \leq n} \cor(x_0, x_{q}, x_{q+1}, \ldots, x_{n}) \wedge \cor(x_0, x_1,\ldots,x_q)  \\[-0.5ex]
        & {} + \sum_{1 \leq p \leq n} \cor(x_1, x_2, \ldots, x_p, x_{n+1}) \wedge \cor(x_p, x_{p+1},\ldots,x_{n+1}) \,, \end{aligned}
    \end{align*}
    }
    where
    \begin{align*}
        A_{p,q} = {} & \cor(x_1,\ldots,x_{p-1},x_q,\ldots,x_{n+1})- \cor(x_0,\ldots,x_{p-1},x_q,\ldots,x_{n}) \\
        &  - \cor(x_1,\ldots,x_{p},x_{q+1},\ldots,x_{n+1})+ \cor(x_0,\ldots,x_{p},x_{q+1},\ldots,x_{n}) \,.
    \end{align*}

    Next, we compute the LHS, using \eqref{FormulaCoproductCorrelators}. For any $y_0,\dots,y_n$ we have (following the convention that $y_{i+n+1}=y_i$)
    \begin{align*}
    & \delta \cor(y_0,y_1,\ldots,y_{n}) \\
     & = \sum_{j=0}^n\sum_{i=1}^{n-1} \cor(y_{j}, y_{j+1}, \dots, y_{j+i})\wedge  \cor(y_{j},  y_{j+i+1}, \dots, y_{j+n}) \\
    & = \begin{aligned}[t]
    & \sum_{1 \leq p < q \leq n} 
    \!\! \big(  \cor(y_0,\ldots,y_{p-1},y_q,\ldots,y_{n})  - \cor(y_0,\ldots,y_p,y_{q+1},\ldots,y_{n}) \big) 
     \wedge \cor(y_p, y_{p+1}, \ldots, y_q) \\
    & + \sum_{1 \leq p \leq n} \!\! \cor(y_{0}, y_1,y_2,\ldots,y_p) \wedge \cor(y_{0}, y_p, y_{p+1},\ldots,y_n) \,,
    \end{aligned}
    \end{align*}
    We therefore obtain
    \begin{align*}
      \delta \cor(x_0,x_1,\ldots,x_{n}) 
     & = \begin{aligned}[t]
     & \sum_{1 \leq p < q \leq n} A'_{p,q} \wedge \cor(x_p, x_{p+1}, \ldots, x_q) \\
    & + \sum_{1 \leq p \leq n} \cor(x_{0}, x_1,x_2,\ldots,x_p) \wedge \cor(x_{0}, x_p, x_{p+1},\ldots,x_n) \,,
    \end{aligned}
    \end{align*}
    \begin{align*}
         \delta \cor(x_1,x_2,\ldots,x_{n+1}) 
        & = \begin{aligned}[t]
            & \sum_{1 \leq p < q \leq n} A''_{p,q} \wedge \cor(x_p, x_{p+1}, \ldots, x_q) \\
            & + \sum_{1 \leq p \leq n} \cor(x_{n+1}, x_1,x_2,\ldots,x_p) \wedge \cor(x_{n+1}, x_p, x_{p+1},\ldots,x_n)  \,,
            \end{aligned}
    \end{align*}
    where 
    \begin{align*}
        A'_{p,q} &= \cor(x_0,\ldots,x_{p-1},x_q,\ldots,x_{n}) -\cor(x_0,\ldots,x_p,x_{q+1},\ldots,x_{n}) \,, \\
        A''_{p,q} &= \cor(x_1,\ldots,x_{p-1},x_q,\ldots,x_{n+1}) -\cor(x_1,\ldots,x_p,x_{q+1},\ldots,x_{n+1})  \,.
    \end{align*}
    Since \( A_{p,q} = A'_{p,q} + A''_{p,q} \), we see that the LHS equals to the RHS.
\end{proof} 
\end{proposition}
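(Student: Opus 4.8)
The plan is to prove the identity purely formally, by expanding both sides entirely in terms of correlators---using \autoref{def:IteratedIntegralsViaCorrelators} for the iterated integrals and the cobracket formula \eqref{FormulaCoproductCorrelators} for correlators---and then matching the two resulting expressions term by term. No new input beyond these two formulas and the cyclic symmetry \ref{Acycle} should be needed; the content is a combinatorial reshuffling in the spirit of \cite[p.~437--438]{Gon01B}.

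First I would rewrite the right-hand side. Applying \autoref{def:IteratedIntegralsViaCorrelators} to both factors, each summand
\[
\IL(x_0; x_1,\ldots,x_i, x_j,\ldots, x_n; x_{n+1}) \wedge \IL(x_i; x_{i+1}, \ldots, x_{j-1}; x_j)
\]
becomes a wedge of two differences of correlators, which expands into four correlator wedge terms. Here I must be careful with the degenerate cases $i=0$ and $j=n+1$, where one of the correlators loses an endpoint and should be read with the natural convention. Since the inner factor $\IL(x_i;x_{i+1},\ldots,x_{j-1};x_j)$ always carries the consecutive block $x_{i+1},\ldots,x_{j-1}$, the natural bookkeeping variable is the block $\cor(x_p,\ldots,x_q)$ with $1\le p<q\le n$. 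Reorganizing the double sum along these blocks splits the right-hand side into an interior sum $\sum_{p<q} A_{p,q}\wedge\cor(x_p,\ldots,x_q)$, together with two boundary sums, each indexed by a single integer, that collect the contributions whose legs reach $x_0$ or $x_{n+1}$.

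Next I would expand the left-hand side. Writing $\IL(x_0;x_1,\ldots,x_n;x_{n+1})=\cor(x_1,\ldots,x_n,x_{n+1})-\cor(x_0,x_1,\ldots,x_n)$, we have $\delta\IL=\delta\cor(x_1,\ldots,x_{n+1})-\delta\cor(x_0,\ldots,x_n)$. The crucial preliminary step is to recast the cyclic cobracket \eqref{FormulaCoproductCorrelators} of a general $\cor(y_0,\ldots,y_n)$ so that the inner wedge factor is always a non-wrapping block $\cor(y_p,\ldots,y_q)$; using \ref{Acycle} and antisymmetry of the wedge, this yields an interior sum over $1\le p<q\le n$ plus a boundary sum in which $y_0$ appears at both ends. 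Applying this twice and subtracting expresses $\delta\IL$ as an interior sum $\sum_{p<q}(A'_{p,q}+A''_{p,q})\wedge\cor(x_p,\ldots,x_q)$ together with two boundary sums.

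The final step is to match the two expansions. The interior contributions agree once one checks the single identity $A_{p,q}=A'_{p,q}+A''_{p,q}$, which is immediate from the definitions, and the boundary sums coincide after identifying the relevant correlators directly. I expect the main obstacle to be exactly this bookkeeping: keeping the indexing consistent through the cyclic rearrangement of the correlator cobracket and correctly handling the degenerate endpoint cases. There is no real conceptual difficulty---the whole identity reduces to verifying that a small number of correlator terms cancel or combine as claimed---so the challenge is organizational rather than mathematical.
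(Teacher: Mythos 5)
Your proposal follows exactly the paper's own argument: expand both sides into correlators via \autoref{def:IteratedIntegralsViaCorrelators} and \eqref{FormulaCoproductCorrelators}, reorganize each into an interior sum over blocks \( \cor(x_p,\ldots,x_q) \) with \( 1 \leq p < q \leq n \) plus boundary sums, and conclude by checking \( A_{p,q} = A'_{p,q} + A''_{p,q} \) together with the direct identification of boundary terms. The approach and the bookkeeping are the same as in the paper, so the proposal is correct.
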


\subsection{Properties of iterated integrals} \label{sec:propertiesII}

A number of the properties of correlators established in \autoref{SectionCorrelatorProperties}, or encoded as part of their axioms in \refAall, have corresponding results on the level of iterated integrals.

\begin{proposition}[Affine invariance of \( \IL \)]\label{cor:int:affine}
    For $n \geq 2$, $a \in F^\times$, and $x_0,\dots,x_{n+1}, b \in F$ we have
    \begin{equation}\label{eqn:int:affine}
        \IL(x_0; x_1, \ldots, x_n; x_{n+1}) = \IL(ax_0 + b; ax_1 + b, \ldots, ax_n + b; ax_{n+1} + b) \,.
    \end{equation}
\end{proposition}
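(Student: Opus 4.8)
The plan is to reduce affine invariance of iterated integrals to the already-established affine behaviour of correlators. Recall from \autoref{def:IteratedIntegralsViaCorrelators} that
\[
\IL(x_0; x_1,\ldots,x_n; x_{n+1}) = \cor(x_1,\ldots,x_n,x_{n+1}) - \cor(x_0, x_1,\ldots, x_n)\,,
\]
so it suffices to show that each of the two correlators appearing on the right is invariant under the simultaneous transformation $x_i \mapsto a x_i + b$. The translation part $x_i \mapsto x_i + b$ is immediate from the axiom \ref{Atranslate}, which holds for correlators of any weight. The scaling part $x_i \mapsto a x_i$ is exactly the content of \ref{Ascale}, but that axiom is only imposed for tuples of length $n+1 \geq 3$, i.e.\ for correlators of weight $\geq 2$. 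Both correlators on the right-hand side, $\cor(x_1,\ldots,x_n,x_{n+1})$ and $\cor(x_0,x_1,\ldots,x_n)$, have $n+1$ arguments and hence weight $n$; since we assume $n \geq 2$, each has at least three arguments, so \ref{Ascale} applies directly to each.

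Concretely, I would first apply \ref{Atranslate} to rewrite $\cor(ax_1+b,\ldots,ax_{n+1}+b) = \cor(ax_1,\ldots,ax_{n+1})$ and likewise for the second correlator, then apply \ref{Ascale} with multiplier $m = a \in F^\times$ to strip off the factor $a$. Combining, we get
\[
\IL(ax_0+b; ax_1+b,\ldots,ax_n+b; ax_{n+1}+b) = \cor(x_1,\ldots,x_{n+1}) - \cor(x_0,\ldots,x_n) = \IL(x_0; x_1,\ldots,x_n; x_{n+1})\,,
\]
which is the desired identity.

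I do not anticipate any serious obstacle here, as the result is essentially a bookkeeping consequence of the axioms. The only point requiring a little care is the weight restriction: the hypothesis $n \geq 2$ is precisely what guarantees that \ref{Ascale} (which excludes weight one) is available for both constituent correlators. For $n = 1$ the statement would fail, since scaling invariance genuinely does not hold in $\mathcal{A}_1(F)$ — indeed \eqref{FormulaA1} shows $\cor(ma,mb)$ differs from $\cor(a,b)$ by $\log(m)$ — which is consistent with the stated hypothesis. Thus the proof is a direct two-line computation once the axioms are invoked in the correct order, and the main thing to flag is why the weight hypothesis is exactly what is needed.
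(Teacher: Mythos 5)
Your proof is correct and follows essentially the same route as the paper: the paper's proof likewise writes \( \IL \) as the difference \( \cor(x_1,\ldots,x_n,x_{n+1}) - \cor(x_0,x_1,\ldots,x_n) \) and invokes \ref{Atranslate} and \ref{Ascale} directly. Your additional remark that the hypothesis \( n \geq 2 \) is exactly what makes \ref{Ascale} available (and that the statement fails for \( n = 1 \) by \eqref{FormulaA1}) is a correct and worthwhile clarification, but not a different argument.
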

\begin{proof}
    This follows directly from properties \ref{Atranslate} and \ref{Ascale} for correlators, after writing
    \[
        \IL(x_0; x_1, \ldots, x_n; x_{n+1}) =  \cor(x_1,\ldots, x_n, x_{n+1}) - \cor(x_0, x_1,\ldots, x_n) \,. \qedhere
    \]
\end{proof}

\begin{proposition}\label{PropShuffleIter}
For any $a,b,x_1,\dots,x_{n_1+n_2} \in F$, $n_1,n_2\ge1$, we have
\[\sum_{\sigma \in \Sigma_{n_1,n_2}} \IL(a;x_{\sigma(1)},\dots,x_{\sigma(n_1+n_2)};b) = 0\,,\]
where $\Sigma_{n_1,n_2}\subseteq \mathfrak{S}_{n_1+n_2}$ is the set of $(n_1,n_2)$-shuffles, i.e., permutations $\sigma$ satisfying $\sigma^{-1}(1)<\dots<\sigma^{-1}(n_1)$ and $\sigma^{-1}(n_1+1)<\dots<\sigma^{-1}(n_1+n_2)$.   
\end{proposition}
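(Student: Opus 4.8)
The plan is to reduce the statement directly to the shuffle relation for correlators in \autoref{LemmaShuffleCor}, rather than re-running the cobracket-plus-specialization argument used for the earlier properties. Writing $n = n_1 + n_2$ and expanding each iterated integral via \autoref{def:IteratedIntegralsViaCorrelators}, namely
\[
\IL(a; x_{\sigma(1)},\ldots,x_{\sigma(n)}; b) = \cor(x_{\sigma(1)},\ldots,x_{\sigma(n)}, b) - \cor(a, x_{\sigma(1)},\ldots,x_{\sigma(n)}) \,,
\]
the shuffle sum over $\sigma \in \Sigma_{n_1,n_2}$ splits as a difference of two sums of correlators.

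First I would handle the second sum: $\sum_{\sigma} \cor(a, x_{\sigma(1)},\ldots,x_{\sigma(n)})$ is precisely the left-hand side of \eqref{FormulaShuffleCor} with base point $x_0 = a$, and hence vanishes. For the first sum $\sum_{\sigma} \cor(x_{\sigma(1)},\ldots,x_{\sigma(n)}, b)$, I would apply the cyclic symmetry \ref{Acycle} term-by-term to move the fixed entry $b$ to the front, rewriting each summand as $\cor(b, x_{\sigma(1)},\ldots,x_{\sigma(n)})$. The shuffle set $\Sigma_{n_1,n_2}$ permutes only the interior arguments $x_1,\ldots,x_n$ and leaves $b$ untouched, so the cyclic rotation is uniform across the sum and does not disturb the shuffle structure. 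The rewritten sum is then the left-hand side of \eqref{FormulaShuffleCor} with base point $x_0 = b$, and again vanishes. Combining, both sums are zero and the claimed identity follows.

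The only point requiring any care is the bookkeeping in the second reduction: one must confirm that applying \ref{Acycle} to each summand genuinely reproduces the correlator shuffle relation with base point $b$, i.e.\ that cyclically rotating the fixed endpoint commutes with summation over shuffles. Since the shuffle acts only on the interior entries, this is immediate and no real obstacle arises; the entire argument is a short formal manipulation. As an alternative, one could instead verify that $\delta$ annihilates the iterated-integral shuffle sum using the cobracket formula of \autoref{lem:intascor}, thereby obtaining an element in the kernel of $\delta$ over $F(t)$ and concluding via a $\Sp_{t\to1} - \Sp_{t\to\infty}$ specialization as in \autoref{lem:correv} and \autoref{LemmaShuffleCor}; but the direct reduction above is considerably shorter and avoids reintroducing the auxiliary variable $t$.
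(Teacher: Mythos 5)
Your argument is correct and is exactly the paper's proof, which reads in full: ``When written in terms of correlators this follows from~\eqref{FormulaShuffleCor} and the cyclic symmetry \ref{Acycle}.'' You have simply spelled out the same two-step reduction (the $\cor(a,\ldots)$ sum is the correlator shuffle relation with base point $a$; the $\cor(\ldots,b)$ sum becomes the one with base point $b$ after a cyclic rotation) in more detail.
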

\begin{proof}
When written in terms of correlators this follows from~\eqref{FormulaShuffleCor} and the cyclic symmetry \ref{Acycle}.
\end{proof}
	
\begin{proposition}\label{PropDihedralIter}
The following reversal symmetry holds
    \begin{equation}\label{eqn:revsym}
    \IL(x_0; x_1,\ldots,x_n; x_{n+1}) = (-1)^{n+1} \IL(x_0; x_n,\ldots,x_1; x_{n+1}) \,.
    \end{equation}
\end{proposition}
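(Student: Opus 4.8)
The plan is to reduce the identity to the reversal symmetry for correlators, \autoref{lem:correv}, combined with the cyclic symmetry \ref{Acycle}. Expanding the definition from \autoref{def:IteratedIntegralsViaCorrelators},
\[
\IL(x_0; x_1,\ldots,x_n; x_{n+1}) = \cor(x_1,\ldots,x_n,x_{n+1}) - \cor(x_0, x_1,\ldots, x_n)\,,
\]
both correlators on the right are weight-$n$ correlators with $n+1$ arguments, so \autoref{lem:correv} applies to each with the common sign $(-1)^{n+1}$. This gives
\[
\IL(x_0; x_1,\ldots,x_n; x_{n+1}) = (-1)^{n+1}\bigl( \cor(x_{n+1},x_n,\ldots,x_1) - \cor(x_n,\ldots,x_1,x_0) \bigr)\,.
\]

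Next I would apply the cyclic symmetry \ref{Acycle} to move $x_{n+1}$ from the front of the first correlator to its end, and $x_0$ from the end of the second correlator to its front, obtaining
\[
\cor(x_{n+1},x_n,\ldots,x_1) - \cor(x_n,\ldots,x_1,x_0) = \cor(x_n,\ldots,x_1,x_{n+1}) - \cor(x_0,x_n,\ldots,x_1)\,.
\]
By \autoref{def:IteratedIntegralsViaCorrelators} the right-hand side is exactly $\IL(x_0; x_n,\ldots,x_1; x_{n+1})$, which yields \eqref{eqn:revsym}.

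Since the argument is a short unwinding of definitions, there is no genuine obstacle; the only thing to watch is the bookkeeping, namely confirming that both correlators carry the same sign $(-1)^{n+1}$ (each has $n+1$ arguments) and that the two cyclic rotations align the arguments into precisely the order appearing in the definition of the reversed iterated integral. The degenerate cases $n=0$ (where both sides vanish by the convention $\IL(x_0;x_1)=0$) and $n=1$ (where the reversal is trivial) are automatically consistent and need no separate treatment.
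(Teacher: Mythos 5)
Your proof is correct and is essentially the paper's own argument: the paper's proof of \autoref{PropDihedralIter} likewise reduces the claim to the correlator reversal symmetry \eqref{eqn:correv} together with the cyclic symmetry \ref{Acycle}, and your write-up just makes the sign bookkeeping and the two cyclic rotations explicit.
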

\begin{proof}   
Likewise, this follows from \eqref{eqn:correv} and the cyclic symmetry \ref{Acycle}.
\end{proof}

The distribution relations also hold for \( \IL \); see \autoref{LemmaDistributionInt} below.
There is also a cyclic symmetry for iterated integrals, but it is more involved, see \autoref{cor:int:dihedral} below.

\subsection{The Hopf algebra of formal multiple polylogarithms}\label{SectionHopfAlebra} 
Let $Q$ be the functor from the category of commutative graded connected Hopf algebras to the category of graded Lie coalgebras sending a graded connected Hopf algebra $H$ to its graded Lie coalgebra of indecomposables $Q(H)=H_{>0}/(H_{>0} \cdot H_{>0})$.  This functor 
 has a right adjoint functor $U^c$ sending a graded Lie coalgebra $\L$ to its universal coenveloping coalgebra  $U^c(\L)$, see \cite[\S 3]{Mic80}.

\begin{definition} Let $F$ be an infinite field. The graded Hopf algebra of formal multiple polylogarithms $\mathcal{H}^\mathrm{f}(F)$ is the universal coenveloping coalgebra $U^c(\mathcal{L}^\mathrm{f}(F))$ of $\mathcal{L}^\mathrm{f}(F)$. 
\end{definition}

The Lie coalgebra $\mathcal{L}^\mathrm{f}(F)$ is positively graded, so $\mathcal{H}^\mathrm{f}(F)$ is a commutative graded connected Hopf algebra, which is free as a commutative algebra and $Q(\mathcal{H}^\mathrm{f}(F))=\mathcal{L}^\mathrm{f}(F)$, see \cite{MM65, Car07}. Denote by $p\colon \mathcal{H}^\mathrm{f}(F) \longrightarrow  \mathcal{L}^\mathrm{f}(F)$ the projection onto the space of indecomposables.

In the next lemma we describe elements in $\mathcal{H}^\mathrm{f}(F)$ called iterated integrals.

\begin{lemma} \label{PropertiesIteratedIntegrals}
There exists a unique collection of elements $\I(x_0;x_1,\dots,x_n;x_{n+1})\in \mathcal{H}^\mathrm{f}_n(F)$ for $n\geq 0$ and $x_0,\dots,x_{n+1}\in F$ with coproduct
\begin{align*}
&\Delta(\I(x_0;x_1,\dots,x_n;x_{n+1}))=\\
&\sum_{\substack{0=i_0<i_1<\dots \\ \dots<i_k<i_{k+1}=n+1}}\I(x_{i_0};x_{i_1},\dots,x_{i_k};x_{i_{k+1}})\otimes \prod_{j=0}^k\I(x_{i_j};x_{i_{j}+1},\dots,x_{i_{j+1}-1};x_{i_{j+1}})\,.
\end{align*}
such that 
\begin{equation}\label{FormulaCorIt}
p(\I(x_0;x_1,\dots,x_n;x_{n+1}))=\IL(x_0; x_1,\ldots,x_n; x_{n+1}) \in \L_{n}(F) \text{ for $n\geq 1$}\,,
\end{equation}
and the following properties hold:
\begin{enumerate}[label=\rm(P\arabic*),ref=\rm(P\arabic*)]
\item\label{rel:unit} $\I(x_0;x_1)=1\in \mathcal{H}^\mathrm{f}_0(F)$
\item\label{rel:shuffle}  $\I(a;x_1,\dots,x_n;b)   \I(a;x_{n_1+1},\dots,x_{n_1+n_2};b)=\sum_{\sigma \in \Sigma_{n_1,n_2}} \I(a;x_{\sigma(1)},\dots,x_{\sigma(n_1+n_2)};b)$,\\
where $\Sigma_{n_1,n_2}\subseteq \mathfrak{S}_{n_1+n_2}$ is the set of $(n_1,n_2)$-shuffles, i.e., permutations $\sigma \in \mathfrak{S}_{n_1+n_2}$ such that $\sigma^{-1}(1)<\dots<\sigma^{-1}(n_1)$ and $\sigma^{-1}(n_1+1)<\dots<\sigma^{-1}(n_1+n_2)$.   
\item \label{rel:pathdecomp} $\I(x_0;x_1,\dots,x_n;x_{n+1})=\sum_{k=0}^n\I(x_0;x_1,\dots,x_k;a)\I(a;x_{k+1},\dots,x_n;x_{n+1})$ for $n\geq 0$ and $a\in F$,
\item\label{rel:equalbounds} $\I(x_0;x_1,\dots,x_n;x_{n+1})=0$ if $x_0=x_{n+1}$ and $n\geq 1$.
\end{enumerate}
\end{lemma}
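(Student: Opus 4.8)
The plan is to construct the elements $\I(x_0;x_1,\dots,x_n;x_{n+1})$ by induction on the weight $n$, establishing existence, uniqueness, and the relations (P1)--(P4) in a single inductive sweep. The case $n=0$ is the normalization (P1), and for $n=1$ we have $\mathcal{H}^\mathrm{f}_1(F)=\mathcal{L}^\mathrm{f}_1(F)$, so we simply set $\I(x_0;x_1;x_2)=\IL(x_0;x_1;x_2)$. For the inductive step the key observation is that, writing $\Delta'=\Delta-\I\otimes 1-1\otimes\I$ for the reduced coproduct, the prescribed formula expresses $\Delta'\I(x_0;x_1,\dots,x_n;x_{n+1})$ entirely through iterated integrals of weight strictly less than $n$ (the extreme terms $k=n$ and $k=0$ of the sum produce precisely $\I\otimes 1$ and $1\otimes\I$, and every interior term has both tensor factors of positive weight $<n$). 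These lower-weight integrals are already constructed by the inductive hypothesis, so the problem reduces to the following lifting question: given the target indecomposable $\IL(x_0;x_1,\dots,x_n;x_{n+1})\in\mathcal{L}^\mathrm{f}_n(F)$ and the candidate reduced coproduct $\rho_n$ supplied by the formula, produce a unique $h\in\mathcal{H}^\mathrm{f}_n(F)$ with $p(h)=\IL(x_0;x_1,\dots,x_n;x_{n+1})$ and $\Delta'h=\rho_n$.

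Uniqueness is immediate: the difference of two lifts has vanishing reduced coproduct and vanishing image in $\mathcal{L}^\mathrm{f}(F)$, hence is a primitive, decomposable element; since $\mathcal{H}^\mathrm{f}(F)=U^c(\mathcal{L}^\mathrm{f}(F))$ is free as a commutative algebra over $\Q$, primitives inject into indecomposables and such an element must vanish. For existence I would first check that $\rho_n$ is \emph{admissible}, meaning that it is coassociative and that its antisymmetrized projection to $\mathcal{L}^\mathrm{f}\wedge\mathcal{L}^\mathrm{f}$ equals the cobracket $\delta\IL(x_0;x_1,\dots,x_n;x_{n+1})$. The latter compatibility is exactly the content of \autoref{lem:intascor}, so no fresh computation is required; coassociativity of $\rho_n$ is the formal analogue of Goncharov's coassociativity for the coproduct on iterated integrals, proved by the same combinatorial double-cutting argument, now legitimate at the level of $\mathcal{L}^\mathrm{f}(F)$ by induction. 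Granting admissibility, one lifts by choosing any $h_0$ with $p(h_0)=\IL(\dots)$ and correcting it by a decomposable: the discrepancy $\rho_n-\Delta'h_0$ is a reduced cobar $2$-cocycle whose image in $\mathcal{L}^\mathrm{f}\otimes\mathcal{L}^\mathrm{f}$ is symmetric, and freeness of $\mathcal{H}^\mathrm{f}(F)$ guarantees that it is realized as $\Delta'$ of a decomposable correction term.

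Once existence and uniqueness are in place, the relations (P1)--(P4) follow from the uniqueness clause via the principle that a primitive element of $\mathcal{H}^\mathrm{f}_n(F)$ mapping to $0$ in $\mathcal{L}^\mathrm{f}_n(F)$ must vanish. For (P4), when $x_0=x_{n+1}$ the outer factor in every interior term of the coproduct formula is an integral $\I(x_0;\dots;x_0)$ with equal endpoints, which vanishes by the inductive hypothesis, so $\Delta'\I(x_0;x_1,\dots,x_n;x_0)=0$; its indecomposable projection $\IL(x_0;x_1,\dots,x_n;x_0)=\cor(x_1,\dots,x_n,x_0)-\cor(x_0,x_1,\dots,x_n)$ vanishes by the cyclic symmetry \ref{Acycle}, whence the element is $0$. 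For (P3) and (P2) I would show that the two sides of each identity have equal coproduct and equal image under $p$, so that their difference is primitive and decomposable, hence zero. Equality of coproducts is the standard compatibility of the deconcatenation coproduct with path composition, respectively the shuffle product, verified inductively. Equality of indecomposable projections reduces, after discarding the decomposable (genuine product) terms, to the cyclic symmetry \ref{Acycle} of correlators in the case of (P3), and to the vanishing of the shuffle sum in \autoref{PropShuffleIter} in the case of (P2).

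The main obstacle will be the existence half of the lifting step: one must confirm that admissibility of $\rho_n$ genuinely forces the obstruction cocycle to be a coboundary of a decomposable element, and this is precisely where the structure of $U^c(\mathcal{L}^\mathrm{f}(F))$ as a free commutative Hopf algebra over $\Q$ is used in an essential way. By contrast, everything else---the coassociativity of the coproduct formula and the derivations of (P1)--(P4)---is combinatorial and rests entirely on facts already established in \autoref{SectionCorrelatorProperties} and \autoref{sec:IteratedIntegrals}.
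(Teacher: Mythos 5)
Your proposal is correct in outline but follows a genuinely different route from the paper. The paper does not build the elements \( \I \) by induction on weight: it takes Goncharov's Hopf algebra \( \mathcal{I}(F) \) of formal iterated integrals (generated by symbols \( \mathbb{I}(x_0;x_1,\dots,x_n;x_{n+1}) \) subject exactly to the unit, shuffle, path-composition and equal-bounds relations, with the stated coproduct), defines \( \ell\colon \mathcal{I}(F)\to\mathcal{L}^\mathrm{f}(F) \) killing products and sending \( \mathbb{I} \) to \( \IL \), checks via \autoref{PropShuffleIter}, \ref{Acycle} and \autoref{lem:intascor} that \( \ell \) is well defined and induces a Lie coalgebra morphism on \( Q(\mathcal{I}(F)) \), and then invokes the adjunction between \( Q \) and \( U^c \) to obtain a Hopf algebra map \( \mathcal{I}(F)\to U^c(\mathcal{L}^\mathrm{f}(F))=\mathcal{H}^\mathrm{f}(F) \). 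The images of the \( \mathbb{I} \) are the desired elements, and properties \ref{rel:unit}--\ref{rel:equalbounds} and the coproduct formula hold automatically because they hold in \( \mathcal{I}(F) \). What the paper's approach buys is that all the combinatorial verifications you defer --- coassociativity of the coproduct formula, its compatibility with the shuffle product and with path composition --- are already packaged in Goncharov's construction and need not be redone; what your approach buys is independence from that external input, at the cost of an obstruction-theoretic existence step. The uniqueness argument is identical in both (a primitive element with vanishing image under \( p \) is zero, by Milnor--Moore). Two points in your sketch deserve emphasis before it counts as a proof: (a) the coassociativity of \( \rho_n \) and the coproduct identities needed for \ref{rel:shuffle} and \ref{rel:pathdecomp} are genuine combinatorial lemmas that must be written out, not just analogues of known facts; and (b) the surjectivity claim in your lifting step --- that a cobar \( 2 \)-cocycle whose projection to \( \bigwedge^2\mathcal{L}^\mathrm{f}(F) \) is \( \delta \)-exact is the reduced coproduct of some element with the prescribed indecomposable part --- is exactly the identification of the cobar cohomology of \( U^c(\mathcal{L}) \) with the Chevalley--Eilenberg cohomology of \( \mathcal{L} \) in degree \( 2 \). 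This is true in characteristic zero (and is the same fact the paper uses later when it writes \( H^i(\mathcal{H}^\mathrm{f}(F),\Q)_n\cong H^i(\mathcal{L}^\mathrm{f}(F),\Q)_n \)), but it is the load-bearing step of your argument and should be proved or precisely cited rather than attributed loosely to freeness as a commutative algebra. With those two items supplied, your proof goes through.
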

\begin{proof}  Consider the Hopf algebra $\mathcal{I}(S)$ defined by Goncharov in \cite[\S 2.1]{Gon05} and take $S=F$. It is spanned by elements 
\[
\mathbb{I}(x_0;x_1,\dots,x_n;x_{n+1}) \text{ for } x_0,\dots,x_{n+1}\in F, n\geq 0 \,,
\]
subject to the relations
\begin{enumerate}[label=\rm(R\arabic*),ref=\rm(R\arabic*)]
\item\label{int:rel1} The unit: $\mathbb{I}(x_0;x_1)=1\in \mathcal{I}_{0}(F)$,
\item\label{int:rel2} The shuffle product formula: for $m, n\geq 0$ one has  
\[
\mathbb{I}(a;x_1,\dots,x_n;b)   \mathbb{I}(a;x_{n+1},\dots,x_{n+m};b)=\sum_{\sigma \in \Sigma_{n,m}} \mathbb{I}(a;x_{\sigma(1)},\dots,x_{\sigma(n+m)};b)\,,
\]
\item\label{int:rel3} The path composition formula: for any $n\geq 0$ and $a\in F$ one has 
\[
\mathbb{I}(x_0;x_1,\dots,x_n;x_{n+1})=\sum_{k=0}^n\mathbb{I}(x_0;x_1,\dots,x_k;a)\mathbb{I}(a;x_{k+1},\dots,x_n;x_{n+1}) \,,
\]
\item\label{int:rel4}  $\mathbb{I}(x_0;x_1,\dots,x_n;x_{n+1})=0$ if $x_0=x_{n+1}$ and $n\geq 1$.
\end{enumerate}
The coproduct in $\mathcal{I}(S)$ is given by the formula
\begin{align*}
&\Delta(\mathbb{I}(x_0;x_1,\dots,x_n;x_{n+1}))=\\
&\sum_{\substack{0=i_0<i_1<\dots \\ \dots <i_k<i_{k+1}=n+1}}\mathbb{I}(x_{i_0};x_{i_1},\dots,x_{i_k};x_{i_{k+1}})\otimes \prod_{j=0}^k\mathbb{I}(x_{i_j};x_{i_{j}+1},\dots,x_{i_{j+1}-1};x_{i_{j+1}})\,.
\end{align*}

Consider the map 
\[
 \ell \colon \mathcal{I}(F) \longrightarrow \mathcal{L}^\mathrm{f}(F)
\]
sending products $\mathcal{I}_{>0}\cdot\mathcal{I}_{>0}$ to $0$ and such that 
\[
\ell\bigl(\mathbb{I}(x_0;x_1,\dots,x_n;x_{n+1})\bigr)=\IL(x_0; x_1,\ldots,x_n; x_{n+1})\,.
\]
To see that it is well-defined, we need to check that 
relations \ref{int:rel1}--\ref{int:rel4}
above hold in $\mathcal{L}^\mathrm{f}(F)$.  Relations \ref{int:rel1}, \ref{int:rel3}, and \ref{int:rel4} are obvious and \ref{int:rel2} follows from \autoref{PropShuffleIter}. Since $\ell$ kills products, it induces a map $\ell \colon  Q(\mathcal{I})\longrightarrow\mathcal{L}^\mathrm{f}(F)$. By \autoref{lem:intascor}, this is a morphism of Lie coalgebras ($\ell$ maps to zero any term in the coproduct $\Delta(\I(x_0;x_1,\dots,x_n;x_{n+1}))$ than involves a non-trivial product of iterated integrals; the remaining terms are exactly the ones appearing in \autoref{lem:intascor}). By the adjunction between functors $U^c$ and $Q$, we have a morphism of Hopf algebras $\mathcal{I}(F)\longrightarrow U^c(\mathcal{L}^\mathrm{f}(F))$. The images of $\mathbb{I}(x_0;x_1,\dots,x_n;x_{n+1})$ satisfy all the properties we asked for.

Next, we show uniqueness. Assume that we have two families of elements $\I$ and $\I'$ satisfying the properties above. We prove that for any $x_0,\dots x_{n+1}\in F$ we have $\I(x_0;x_1,\dots,x_n;x_{n+1})=\I'(x_0;x_1,\dots,x_n;x_{n+1})$ by induction on $n$. For $n=0$ it follows from \ref{rel:unit}. For $n=1$ it follows from \eqref{FormulaCorIt} and the fact that $\mathcal{H}^\mathrm{f}_1(F)\cong\L_1(F)$. Assume that $n\geq 2$. By induction, for any $x_0,\dots x_{n+1}\in F$  we have 
\[
\Delta'\bigl(\I(x_0;x_1,\dots,x_n;x_{n+1})-\I'(x_0;x_1,\dots,x_n;x_{n+1})\bigr)=0\,,
\]
where \( \Delta' = \Delta - 1 \otimes \id - \id \otimes 1 \) is the reduced coproduct.
On the other hand, by  \eqref{FormulaCorIt} we have 
\[
p\bigl(\I(x_0;x_1,\dots,x_n;x_{n+1})-\I'(x_0;x_1,\dots,x_n;x_{n+1})\bigr)=0\,.
\]
The statement then follows from the fact that $p$ induces an isomorphism between the set of primitive elements in $\mathcal{H}^\mathrm{f}(F)$ and $\ker(\delta)\subseteq \mathcal{L}^\mathrm{f}(F)$, see \cite{MM65}.
\end{proof}

\begin{proposition} Let $F$ be an infinite field. Iterated integrals $\I(x_0;x_1,\dots,x_n;x_{n+1})\in \mathcal{H}^\mathrm{f}_n(F)$ for $n\geq 0$ and $x_0,\dots,x_{n+1}\in F$ span  $\mathcal{H}^\mathrm{f}(F)$ as a $\Q$-vector space.
\end{proposition}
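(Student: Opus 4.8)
The plan is a weight induction that reduces everything to the fact that a product of two iterated integrals is again a $\Q$-linear combination of single iterated integrals. First I would lift correlators to the Hopf algebra by setting
\[
\widetilde{\cor}(x_0,\dots,x_n):=\sum_{i=0}^{n}\I(0;\underbrace{0,\dots,0}_{i},x_0,\dots,x_{n-1-i};x_{n-i})\in\mathcal{H}^\mathrm{f}_n(F),
\]
the same combination as in \autoref{prop:corasint} but with $\IL$ replaced by $\I$. By \eqref{FormulaCorIt} and \autoref{prop:corasint} this satisfies $p(\widetilde{\cor}(x_0,\dots,x_n))=\cor(x_0,\dots,x_n)$. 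Writing $V_n\subseteq\mathcal{H}^\mathrm{f}_n(F)$ for the $\Q$-span of the single iterated integrals of weight $n$, we have $\widetilde{\cor}\in V_n$, and since correlators span $\mathcal{L}^\mathrm{f}_n(F)$ this gives $p(V_n)=\mathcal{L}^\mathrm{f}_n(F)$.

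I then induct on $n$ to prove $V_n=\mathcal{H}^\mathrm{f}_n(F)$. The cases $n=0,1$ are immediate from \ref{rel:unit} and, in weight one, from $\mathcal{H}^\mathrm{f}_1(F)\cong\mathcal{L}^\mathrm{f}_1(F)$. For $n\ge2$, the kernel of $p$ in degree $n$ is exactly the space of decomposables $(\mathcal{H}^\mathrm{f}_{>0}\cdot\mathcal{H}^\mathrm{f}_{>0})_n$, so from $p(V_n)=\mathcal{L}^\mathrm{f}_n(F)$ we obtain $\mathcal{H}^\mathrm{f}_n(F)=V_n+(\mathcal{H}^\mathrm{f}_{>0}\cdot\mathcal{H}^\mathrm{f}_{>0})_n$. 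By the induction hypothesis every decomposable of weight $n$ is a combination of products of two single iterated integrals of strictly smaller weight. Hence the proposition reduces to the \emph{product-reduction claim}: for any iterated integrals, the product $\I(a;\dots;b)\,\I(c;\dots;d)$ lies in the span of single iterated integrals.

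For the product reduction I would use only the shuffle relation \ref{rel:shuffle}, path composition \ref{rel:pathdecomp}, and \ref{rel:equalbounds}, arguing by induction on total weight. When the two factors share both endpoints, \ref{rel:shuffle} immediately rewrites the product as a sum of single integrals. In general, path composition allows one to insert a common endpoint: from $\I(a;y_1,\dots,y_q;d)=\sum_{k=0}^{q}\I(a;y_1,\dots,y_k;c)\,\I(c;y_{k+1},\dots,y_q;d)$ one solves for the $k=0$ term to express $\I(c;y_1,\dots,y_q;d)$ through $\I(a;y_1,\dots,y_q;d)$ plus products whose last factor has strictly smaller weight. Iterating this (and its right-endpoint analogue) reduces any product to matched-endpoint products, modulo products of strictly lower weight absorbed by the induction, at which point shuffle finishes the job. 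I expect this combinatorial bookkeeping --- controlling the cascade of lower-weight correction terms produced by repeated path composition, and checking that the weight induction genuinely terminates in shuffle products --- to be the main obstacle. The same assertion is exactly that Goncharov's Hopf algebra $\mathcal{I}(F)$ of \cite{Gon05} is spanned by its generators $\mathbb{I}$; since the morphism $\mathcal{I}(F)\to\mathcal{H}^\mathrm{f}(F)$ of \autoref{PropertiesIteratedIntegrals} is surjective (it is onto on indecomposables, the map $\ell$ there hitting all $\IL$ and hence all correlators), this reduction can alternatively be imported from there.
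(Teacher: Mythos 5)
Your argument is essentially the paper's own proof: reduce an arbitrary element of \(\mathcal{H}^\mathrm{f}_n(F)\) modulo decomposables using the lift coming from \autoref{prop:corasint} and the surjectivity of \(p\) on the span of iterated integrals, then absorb the decomposables by induction on weight together with the shuffle product. The paper disposes of your ``product-reduction claim'' simply by citing \ref{rel:shuffle} (which, as you note, literally covers only products with matching endpoints), so the extra bookkeeping you flag --- and your fallback via the surjection \(\mathcal{I}(F)\to\mathcal{H}^\mathrm{f}(F)\), where the spanning statement for the generators \(\mathbb{I}\) is standard --- fills in a step the paper leaves implicit rather than departing from its method.
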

\begin{proof}
We prove that iterated integrals of weight $n$ span  $\mathcal{H}^\mathrm{f}_n(F)$ by induction on $n.$ The cases $n=0,1$ are obvious. For $n\geq 2,$ consider any element $a\in \mathcal{H}^\mathrm{f}_n(F).$ By \autoref{prop:corasint}, there exists a linear combination $a'$ of iterated integrals such that $p(a)=p(a')\in \mathcal{L}^\mathrm{f}_n(F).$ Every element in $\textup{Ker}(p)$ is a product of elements of lower weight, so, by the induction hypothesis and \autoref{PropertiesIteratedIntegrals} property \ref{rel:shuffle}, the element $(a-a')\in \mathcal{H}^\mathrm{f}_n(F)$ can be expressed as a linear combination of iterated integrals. 
\end{proof}

The distribution relations also hold for iterated integrals in the Hopf algebra $\mathcal{H}^\mathrm{f}_n$.

\begin{lemma} \label{LemmaDistributionInt}
\begin{enumerate}
\item
Let $N\in \mathbb{N}$ be such that $(N,\ch(F))=1$. Assume that $F$ contains all $N\!$-th roots of unity. Then, for all $n\geq1$, we have
\[
\I(x_0^N;x_1^N,\dots,x_n^N;x_{n+1}^N)=\sum_{\zeta_1^N=1,\dots,\zeta_{n}^N=1}\I(x_0;\zeta_{1}x_1,\dots,\zeta_{n}x_n;x_{n+1})\,.
\]
\item Assume that $F$ is a field of characteristic $p$. Then the Frobenius map which sends 
$\I(x_0;x_1,\dots,x_n,x_{n+1})$ to $\I(x_0^p;x_1^p,\dots,x_n^p;x_{n+1}^p)$ acts as multiplication by $p^n$.
\end{enumerate}
\end{lemma}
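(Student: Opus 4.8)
The plan is to prove both parts by induction on the weight $n$, in each case reducing the claimed identity to the vanishing of a primitive element. For part (i) set
\[
D_n \coloneqq \I(x_0^N;x_1^N,\dots,x_n^N;x_{n+1}^N)-\sum_{\zeta_1^N=1,\dots,\zeta_n^N=1}\I(x_0;\zeta_1 x_1,\dots,\zeta_n x_n;x_{n+1})\in\mathcal{H}^\mathrm{f}_n(F),
\]
and for part (ii) set $D_n \coloneqq \I(x_0^p;\dots;x_{n+1}^p)-p^n\I(x_0;\dots;x_{n+1})$; in both cases the goal is $D_n=0$. The strategy mirrors the uniqueness argument in \autoref{PropertiesIteratedIntegrals}: I will show that $p(D_n)=0$ in $\mathcal{L}^\mathrm{f}_n(F)$ and that the reduced coproduct $\Delta'(D_n)$ vanishes. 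Since $p$ restricts to an isomorphism from the primitive elements of $\mathcal{H}^\mathrm{f}(F)$ onto $\ker\delta\subseteq\mathcal{L}^\mathrm{f}(F)$ (see \cite{MM65}), these two facts force $D_n=0$.

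The base case $n=1$ uses $\mathcal{H}^\mathrm{f}_1(F)\cong\mathcal{L}^\mathrm{f}_1(F)\cong F^\times_\Q$, so it suffices to compare images under $p$, where $p(\I(x_0;x_1;x_2))=\cor(x_1,x_2)-\cor(x_0,x_1)$; the identity then follows for (i) from the $n=1$ case of \autoref{LemmaDistributionCor} together with $\prod_{\zeta^N=1}(u-\zeta v)=u^N-v^N$ (roots of unity being torsion, hence trivial, in $F^\times_\Q$), and for (ii) from the $n=1$ case of \autoref{PropositionFrobenius}. In fact $p(D_n)=0$ holds for all $n\ge1$ directly, without induction: writing $p(\I(x_0;\dots;x_{n+1}))=\cor(x_1,\dots,x_n,x_{n+1})-\cor(x_0,\dots,x_n)$, I apply \autoref{LemmaDistributionCor} to each correlator. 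For (i) one first uses the cyclic symmetry \ref{Acycle} to bring the unscaled reference point to the front, e.g. $\sum_\zeta\cor(\zeta_1 x_1,\dots,\zeta_n x_n,x_{n+1})=\sum_\zeta\cor(x_{n+1},\zeta_1 x_1,\dots,\zeta_n x_n)=\cor(x_{n+1}^N,x_1^N,\dots,x_n^N)=\cor(x_1^N,\dots,x_n^N,x_{n+1}^N)$; for (ii) one uses $\cor(x_i^p,\dots)=p^n\cor(x_i,\dots)$. Either way the two correlators cancel.

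The heart of the argument is $\Delta'(D_n)=0$, where the induction hypothesis enters, since $\Delta'$ lands in bidegrees with both factors of weight $<n$. Expanding $\Delta'\I(x_0^N;\dots;x_{n+1}^N)$ by the coproduct formula of \autoref{PropertiesIteratedIntegrals} gives, for each cut $0=i_0<i_1<\dots<i_k<i_{k+1}=n+1$ with $1\le k\le n-1$, the term $\I(x_0^N;x_{i_1}^N,\dots,x_{i_k}^N;x_{n+1}^N)\otimes\prod_{j=0}^k\I(x_{i_j}^N;x_{i_j+1}^N,\dots,x_{i_{j+1}-1}^N;x_{i_{j+1}}^N)$, to which the induction hypothesis applies factor by factor. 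Expanding instead $\Delta'\bigl(\sum_\zeta\I(x_0;\zeta_1 x_1,\dots,\zeta_n x_n;x_{n+1})\bigr)$, for a fixed cut the cut-point roots $\zeta_{i_1},\dots,\zeta_{i_k}$ are shared between the left factor and the adjacent sub-integrals, and disentangling this is the main obstacle. I resolve it by summing in two stages: first over the non-cut-point roots, which by the inductive distribution relation collapses each right factor $\I(\zeta_{i_j}x_{i_j};\dots;\zeta_{i_{j+1}}x_{i_{j+1}})$ to $\I(x_{i_j}^N;\dots;x_{i_{j+1}}^N)$ — here the shared boundary scalings are absorbed by $(\zeta x)^N=x^N$ — rendering the right product independent of the cut-point roots; then over the cut-point roots, which collapses the left factor to $\I(x_0^N;x_{i_1}^N,\dots,x_{i_k}^N;x_{n+1}^N)$. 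This reproduces the left-hand term cut by cut, so $\Delta'(D_n)=0$. For part (ii) the same expansion applies with no roots of unity at all: the induction hypothesis turns the left factor and each right factor into their unscaled versions with multipliers $p^k$ and $p^{i_{j+1}-i_j-1}$, and since $k+\sum_j(i_{j+1}-i_j-1)=n$ these combine to the overall factor $p^n$, matching $\Delta'(p^n\I(x_0;\dots;x_{n+1}))$.

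Combining $p(D_n)=0$ with $\Delta'(D_n)=0$ shows that $D_n$ is primitive with trivial image in $\mathcal{L}^\mathrm{f}_n(F)$, whence $D_n=0$ and both inductions close. I expect the only genuinely delicate point to be the bookkeeping of the shared roots of unity across tensor factors in part (i); the two-stage summation, which relies crucially on $(\zeta x)^N=x^N$ neutralizing the boundary scalings, is exactly what makes the coproduct terms match.
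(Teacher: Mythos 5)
Your proposal is correct and follows essentially the same route as the paper's proof: induct on $n$, show the reduced coproduct $\Delta'$ of the difference vanishes via the induction hypothesis, show its projection under $p$ vanishes via the distribution relation for correlators (\autoref{LemmaDistributionCor}), and conclude from the Milnor--Moore identification of primitives with $\ker(\delta)$. The only difference is that you spell out the details the paper leaves as ``easy to see'' --- in particular the two-stage summation over cut-point versus interior roots of unity, which is handled correctly.
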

\begin{proof}
We prove the first statement; the second statement is similar. We argue by induction on $n$; the base case is obvious. It is easy to see that by the induction assumption we have 
\begin{equation} \label{FormulaDistributionCoproduct}
\Delta'\bigg(\I(x_0^N;x_1^N,\dots,x_n^N;x_{n+1}^N)-\sum_{\zeta_1^N=1,\dots,\zeta_{n}^N=1}\I(x_0;\zeta_{1}x_1,\dots,\zeta_{n}x_n;x_{n+1})\bigg)=0\,,
\end{equation}
where \( \Delta' = \Delta - 1 \otimes \id - \id \otimes 1 \) is the reduced coproduct.
On the other hand, by \autoref{LemmaDistributionCor}, 
\begin{equation} \label{FormulaDistributionCoproductp}
p\bigg(\I(x_0^N;x_1^N,\dots,x_n^N;x_{n+1}^N)-\sum_{\zeta_1^N=1,\dots,\zeta_{n}^N=1}\I(x_0;\zeta_{1}x_1,\dots,\zeta_{n}x_n;x_{n+1})\bigg)=0\,.
\end{equation}
The statement then follows from the fact that $p$ induces an isomorphism between the set of primitive elements in $\mathcal{H}^\mathrm{f}(F)$ and $\ker(\delta)\subseteq \mathcal{L}^\mathrm{f}(F)$, see \cite{MM65}.
\end{proof}

\section{Multiple polylogarithms}\label{sec:MultiplePolylogarithms}
\subsection{Multiple polylogarithms and the depth filtration}\label{SectionMultiplePolylogarithms} For an integer $n_0\geq 0$, $k\ge1$, positive integers $n_1,\dots,n_k$, and elements $a_1,\dots,a_k\in F^\times$, we define the
multiple polylogarithm by
\begin{equation}\label{EquationLeibnizGeneral}
\begin{aligned}
	&\Li_{n_0\semi{}n_1,\dots,n_k}(a_1,a_2,\dots,a_k)\\
& \coloneqq (-1)^{k}\I(0;\underbrace{0,\dots,0,1}_{n_0+1},\underbrace{0,\dots,0,a_1}_{n_1},\dots,\underbrace{0,\dots,0,a_1a_2\dots a_{k-1}}_{n_{k-1}},\underbrace{0,\dots,0;a_1a_2\dots a_{k}}_{n_k})\,.
\end{aligned}
\end{equation}
This is motivated by \eqref{EquationLeibnizKontsevich}, but we also introduce an additional parameter \( n_0 \) for convenience when switching between iterated integrals and multiple polylogarithms.
This is an element of $\mathcal{H}^\mathrm{f}_n(F)$ for $n=n_0+n_1+n_2+\dots+n_k$. 
If $n_0=0$, we omit it from the notation: 
\[
\Li_{n_1,\dots,n_k}(a_1,a_2,\dots,a_k) \coloneqq \Li_{0\semi{}n_1,\dots,n_k}(a_1,a_2,\dots,a_k)\,.
\]
We will denote projection of $\Li_{n_0\semi{}n_1,\dots,n_k}(a_1,\dots,a_k)$ to $\mathcal{L}^\mathrm{f}(F)$ by $\Li_{n_0\semi{}n_1,\dots,n_k}^{\mathcal{L}}(a_1,\dots,a_k)$.

We say that the multiple polylogarithm $\Li_{n_0\semi{}n_1,\dots,n_k}(a_1,a_2,\dots,a_k)$ has depth $k$. We define a filtration on $\mathcal{H}^\mathrm{f}(F)$ by letting $\mathcal{D}_k\mathcal{H}^\mathrm{f}(F)$ be the subspace of $\mathcal{H}^\mathrm{f}(F)$ spanned by polylogarithms of depth at most $k.$ Below we prove that  $\mathcal{D}_{k_1} \cdot \mathcal{D}_{k_2}\subseteq \mathcal{D}_{k_1+k_2},$ so $\mathcal{H}^\mathrm{f}(F)$ is a filtered algebra. We will use the same notation for the induced filtration on $\mathcal{L}^\mathrm{f}(F)$. 

The depth filtration can also be described in terms of iterated integrals and correlators.
\begin{proposition} \label{DepthIIandCor}
The subspace $\mathcal{D}_k\mathcal{H}^\mathrm{f}_n(F)$ is spanned by $\I(0;x_1,\dots,x_n;x_{n+1})$ as $(x_1,\dots,x_{n+1})$ runs over $(n+1)$-tuples of elements in $F$ with at most $k$ non-zero entries among $x_1,\dots,x_n$. The space $\mathcal{D}_k\mathcal{L}^\mathrm{f}_n(F)$ is spanned by $\cor(x_0,\dots,x_n)$ as $(x_0,\dots,x_{n})$ runs over $(n+1)$-tuples of elements with at most $k+1$ non-zero entries. Moreover, if $n\ge2$, and there is only one non-zero entry among $x_0,\dots,x_n$, then $\cor(x_0,\dots,x_n)=0$.
\end{proposition}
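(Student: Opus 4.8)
The plan is to prove the three assertions separately, deducing the correlator description from the iterated-integral description and observing that the vanishing statement is purely axiomatic. \textbf{The vanishing statement} is where I would start, as it is the quickest. Suppose $n\ge2$ and exactly one of $x_0,\dots,x_n$, say $b\in F^\times$, is nonzero. Using the cyclic symmetry \ref{Acycle} I rotate $b$ into the first slot, so $\cor(x_0,\dots,x_n)=\cor(b,0,\dots,0)$; writing $(b,0,\dots,0)=(b\cdot1,b\cdot0,\dots,b\cdot0)$ and applying the scaling relation \ref{Ascale} (available since $n\ge2$) yields $\cor(b,0,\dots,0)=\cor(1,0,\dots,0)=0$ by \ref{Aone}. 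This is immediate and is not where the work lies.

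\textbf{The iterated-integral description.} Write $V_k$ for the span of the integrals $\I(0;x_1,\dots,x_n;x_{n+1})$ with at most $k$ nonzero entries among $x_1,\dots,x_n$; the goal is $\mathcal{D}_k\mathcal{H}^\mathrm{f}_n(F)=V_k$. The inclusion $\mathcal{D}_k\subseteq V_k$ is immediate from the dictionary \eqref{EquationLeibnizGeneral}: a depth-$j$ polylogarithm is, up to sign, an integral $\I(0;\cdots;\cdot)$ whose middle string has exactly the $j$ nonzero entries $1,a_1,\dots,a_1\cdots a_{j-1}$. For the reverse inclusion I start from $\I(0;x_1,\dots,x_n;x_{n+1})$ with $m\le k$ nonzero middle entries. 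If $x_{n+1}=0$ the integral vanishes by \ref{rel:equalbounds}. If $x_{n+1}\ne0$ and $m\ge1$, I invoke the affine invariance of \autoref{cor:int:affine} to rescale so that the first nonzero middle entry is $1$ (this needs $n\ge2$; the case $n=1$ is trivial since $\mathcal{H}^\mathrm{f}_1(F)\cong F^\times_\Q$), then read the exponents $n_0,\dots,n_m$ off the positions of the zeros and the $a_i$ off the ratios of consecutive nonzero values, exhibiting the integral as $\pm\Li_{n_0\semi n_1,\dots,n_m}$ of depth $m\le k$. The leftover case $m=0$ gives $\I(0;0,\dots,0;x_{n+1})=\tfrac1{n!}(\log x_{n+1})^n$ by the shuffle relation \ref{rel:shuffle}; to place this in $\mathcal{D}_0$ one uses that $\mathcal{D}_0$ is the subalgebra generated by $\mathcal{H}^\mathrm{f}_1(F)$, together with a polarization argument showing these powers span $\mathrm{Sym}^n(\mathcal{H}^\mathrm{f}_1(F))$.

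\textbf{The correlator description.} Since the depth filtration on $\mathcal{L}^\mathrm{f}$ is the image $p(\mathcal{D}_k\mathcal{H}^\mathrm{f})=p(V_k)$, I compare it with the span $W_{k+1}$ of correlators $\cor(y_0,\dots,y_n)$ having at most $k+1$ nonzero entries. Applying $p$ and \autoref{def:IteratedIntegralsViaCorrelators} to a generator of $V_k$ gives $\cor(x_1,\dots,x_n,x_{n+1})-\cor(0,x_1,\dots,x_n)$, each summand having at most $k+1$ nonzero entries, so $p(V_k)\subseteq W_{k+1}$. For the reverse inclusion I expand $\cor(y_0,\dots,y_n)\in W_{k+1}$ by \autoref{prop:corasint} into terms $\IL(0;0,\dots,0,y_0,\dots,y_{n-1-i};y_{n-i})$. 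The delicate point is the off-by-one between the two nonzero counts: a term whose endpoint $y_{n-i}$ vanishes is itself zero, since cyclic symmetry \ref{Acycle} gives $\cor(\dots,0)=\cor(0,\dots)$ and hence $\IL(0;\dots;0)=0$; in every surviving term the nonzero endpoint absorbs one of the $\le k+1$ nonzero entries, leaving $\le k$ nonzero middle entries, so the term lies in $p(V_k)$. Hence $W_{k+1}=p(V_k)=\mathcal{D}_k\mathcal{L}^\mathrm{f}_n(F)$.

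I expect the \textbf{main obstacle} to be the bookkeeping in the iterated-integral description: one must set up the position-to-exponent dictionary precisely, and cleanly dispatch the degenerate $m=0$ case, where identifying $\I(0;0,\dots,0;x_{n+1})$ with a product of logarithms forces explicit use of the convention for $\mathcal{D}_0$. The index shift in the correlator description is the other point requiring care, but it is resolved entirely by the endpoint-vanishing observation above.
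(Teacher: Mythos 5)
Your handling of the vanishing statement and of the correlator description is correct and essentially identical to the paper's argument: the vanishing follows from \ref{Acycle}, \ref{Ascale} and \ref{Aone}, and the two inclusions for the correlator description are exactly the paper's (expand via \autoref{prop:corasint}, discard the terms with zero endpoint using \ref{rel:equalbounds}, and observe that a non-zero endpoint absorbs one of the $\le k+1$ non-zero entries). The position-to-exponent dictionary for an integral with at least one non-zero middle entry is also fine.

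The genuine gap is the case $\I(0;0,\dots,0;x_{n+1})$, which you correctly single out as the delicate point but then dispatch incorrectly. You place this element in $\mathcal{D}_0$ ``using that $\mathcal{D}_0$ is the subalgebra generated by $\mathcal{H}^\mathrm{f}_1(F)$'', but that is not the paper's definition: $\mathcal{D}_k\mathcal{H}^\mathrm{f}(F)$ is the span of multiple polylogarithms of depth at most $k$, and since every $\Li_{n_0\semi n_1,\dots,n_k}$ has depth $k\ge1$, the space $\mathcal{D}_0\mathcal{H}^\mathrm{f}_n(F)$ is trivial in positive weight while $\I(0;0,\dots,0;x_{n+1})=\tfrac{1}{n!}\bigl(\log^{\mathcal{H}}(x_{n+1})\bigr)^n$ is not (so the proposition is to be read for $k\ge1$). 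The filtered-algebra property does not rescue this either: it only gives $\bigl(\log^{\mathcal{H}}(x)\bigr)^n\in\mathcal{D}_n$. What is actually needed, and what the paper identifies as the \emph{only} non-trivial part of the first claim, is that $\I(0;0,\dots,0;x_{n+1})\in\mathcal{D}_1\mathcal{H}^\mathrm{f}_n(F)$. The paper proves this via the inversion identity \eqref{eq:inversiondepth1}, $\Li_n(x_{n+1})+(-1)^n\Li_n(1/x_{n+1})=-\I(0;0,\dots,0;x_{n+1})$, itself established by an induction: the difference of the two sides has vanishing projection to $\mathcal{L}^\mathrm{f}_n(F)$ (by affine invariance and the reversal symmetry of correlators) and vanishing reduced coproduct (using \eqref{eq:lincoproduct}), hence is a primitive element killed by $p$, hence zero. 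Without this identity, or some substitute expressing $\bigl(\log^{\mathcal{H}}(x)\bigr)^n$ in terms of polylogarithms of depth $\le k$, the inclusion $V_k\subseteq\mathcal{D}_k\mathcal{H}^\mathrm{f}_n(F)$ is not established; note also that your correlator description relies on $p(V_k)=\mathcal{D}_k\mathcal{L}^\mathrm{f}_n(F)$, so the gap propagates there as well.
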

\begin{proof}
The first claim is evident from the definition of $\Li_{n_0\semi{}n_1,\dots,n_k}$ and $\mathcal{D}_k$, the only nontrivial part of the claim is that $\I(0; 0,\dots, 0; x_{n+1})\in \mathcal{D}_1\mathcal{H}^\mathrm{f}_n(F)$, which follows from 
\begin{equation} \label{eq:inversiondepth1}
\Li_{n}(x_{n+1})+(-1)^n\Li_n(1/x_{n+1}) = -\I(0; 0,\dots, 0; x_{n+1}).
\end{equation} 
Note that $\I(0; 0,\dots, 0; x_{n+1})=\frac{(\log^{\mathcal{H}}(x_{n+1}))^n}{n!}$, where we denote by $\log^{\mathcal{H}}$.
To prove~\eqref{eq:inversiondepth1}, we denote by $R$ the difference between the LHS and the RHS of~\eqref{eq:inversiondepth1} and note that projection of $R$ to $\mathcal{L}_n^\mathrm{f}(F)$ vanishes by affine invariance of correlators and~\eqref{eqn:correv}. This proves~\eqref{eq:inversiondepth1} for $n=1$, and for general $n$ using 
\begin{equation} \label{eq:lincoproduct}
\Delta'\Li_n(x) = \sum_{j=1}^{n-1}\Li_{n-j}(x)\otimes \frac{(\log^{\mathcal{H}}(x))^j}{j!}
\end{equation} 
and $\Delta' \frac{(\log^{\mathcal{H}}(x))^n}{n!} = \sum_{j=1}^{n-1}\frac{(\log^{\mathcal{H}}(x))^{n-j}}{(n-j)!}\otimes \frac{(\log^{\mathcal{H}}(x))^j}{j!}$, it follows by induction that $\Delta'R=0$, and since $p(R)=0$, we get~\eqref{eq:inversiondepth1} for all $n\ge1$.

For the second claim, \autoref{prop:corasint} implies that if there are at most $k+1$ non-zero entries among $x_0,\dots,x_{n+1}$ then $\cor(x_0,\dots,x_n)\in \mathcal{D}_k\mathcal{L}^\mathrm{f}_n(F)$, since any term $\IL(0; 0,\dots, 0 , x_0, \ldots, x_{n-i-1}; x_{n-i})$ for $i\ge0$ either vanishes (if $x_{n-i}=0$) or lies in $\mathcal{D}_k\mathcal{L}^\mathrm{f}_n(F)$ by the first claim (if $x_{n-i}\ne 0$). To prove that these elements span $\mathcal{D}_k\mathcal{L}^\mathrm{f}_n(F)$, note that $\IL(0;x_1,\dots,x_n;x_{n+1})$ with at most $k$ non-zero entries among $x_1,\dots,x_n$ is equal to $\cor(x_1,x_2,\ldots,x_{n+1}) - \cor(0, x_1,\ldots, x_n)$ with both correlators in $\mathcal{D}_k\mathcal{L}^\mathrm{f}_n(F)$. The final remark follows from properties \ref{Ascale} and \ref{Aone} for correlators.
\end{proof}

\begin{corollary} The depth filtration makes $\mathcal{H}^\mathrm{f}(F)$ a filtered algebra.
\end{corollary}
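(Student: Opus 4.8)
The plan is to prove the multiplicativity $\mathcal{D}_{k_1}\mathcal{H}^\mathrm{f}(F)\cdot\mathcal{D}_{k_2}\mathcal{H}^\mathrm{f}(F)\subseteq\mathcal{D}_{k_1+k_2}\mathcal{H}^\mathrm{f}(F)$. As products of homogeneous elements are homogeneous, I would argue weight by weight, and using the spanning set from \autoref{DepthIIandCor} reduce to showing that a single product
\[
\I(0;x_1,\dots,x_n;c)\cdot\I(0;y_1,\dots,y_m;d)
\]
lies in $\mathcal{D}_{k_1+k_2}\mathcal{H}^\mathrm{f}(F)$, where there are at most $k_1$ non-zero entries among the $x_i$ and at most $k_2$ among the $y_j$. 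Weight-$0$ factors are scalars and preserve depth, so I may assume $n,m\ge1$.

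The obstacle is that the shuffle relation \ref{rel:shuffle} applies only to integrals sharing both endpoints, whereas the right endpoints $c$ and $d$ are arbitrary; the crux of the proof is to normalize them to $1$. First, if $c=0$ then $\I(0;x_1,\dots,x_n;c)=0$ by \ref{rel:equalbounds} (since $n\ge1$), so the product vanishes, and similarly for $d=0$; hence I may assume $c,d\in F^{\times}$. Then I would invoke the rescaling
\[
\I(0;x_1,\dots,x_n;c)=\I(0;x_1/c,\dots,x_n/c;1)\,,
\]
a special case ($a=c^{-1}$, $b=0$) of the affine invariance $\I(ax_0+b;\dots;ax_{n+1}+b)=\I(x_0;\dots;x_{n+1})$ of iterated integrals in $\mathcal{H}^\mathrm{f}(F)$. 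On the level of $\mathcal{L}^\mathrm{f}(F)$ this is \autoref{cor:int:affine} for $n\ge2$, and follows from \eqref{FormulaA1} for $n=1$; I would lift it to $\mathcal{H}^\mathrm{f}(F)$ by induction on the weight, exactly as in the uniqueness argument of \autoref{PropertiesIteratedIntegrals}. Indeed, the two sides have equal image under $p$ by the $\mathcal{L}^\mathrm{f}(F)$-statement, while the coproduct formula expresses each reduced coproduct through lower-weight sub-integrals of the affinely transformed arguments, which agree with the untransformed ones by the inductive hypothesis; thus the difference is primitive and killed by $p$, hence zero. The key point is that rescaling by $c^{-1}\in F^{\times}$ does not alter which entries vanish.

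After this normalization both factors are of the form $\I(0;-;1)$, and I would apply \ref{rel:shuffle} to write the product as a sum of integrals $\I(0;z_1,\dots,z_{n+m};1)$, where $(z_1,\dots,z_{n+m})$ ranges over the shuffles of $(x_1/c,\dots,x_n/c)$ and $(y_1/d,\dots,y_m/d)$. Since a shuffle only interleaves the two words, each summand has exactly (number of non-zero $x_i/c$) $+$ (number of non-zero $y_j/d$) $\le k_1+k_2$ non-zero entries; by \autoref{DepthIIandCor} every such summand lies in $\mathcal{D}_{k_1+k_2}\mathcal{H}^\mathrm{f}(F)$, proving the claim. I expect the only real difficulty to be the endpoint normalization, i.e.\ establishing affine invariance at the level of the Hopf algebra; once that is in place, the shuffle relation together with the fact that shuffling preserves the number of non-zero letters concludes the argument at once.
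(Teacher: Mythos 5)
Your proof is correct and follows essentially the same route as the paper: reduce via the spanning set of \autoref{DepthIIandCor} and apply the shuffle relation \ref{rel:shuffle}. The paper's own proof is a one-liner that elides the endpoint mismatch you identify, so your normalization of both upper limits to $1$ via a Hopf-algebra lift of scale invariance is a legitimate and necessary fill-in of that detail, and your lifting argument via primitivity is the same device the paper itself uses in \autoref{PropertiesIteratedIntegrals} and \autoref{LemmaDistributionInt}.
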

\begin{proof}
By \autoref{DepthIIandCor}, the space $\mathcal{D}_k\mathcal{H}^\mathrm{f}_n(F)$ is spanned by $\I(0;x_1,\dots,x_n;x_{n+1})$ with at most $k$ non-zero entries among $x_1,\dots,x_n$. Then the shuffle product~\ref{rel:shuffle} immediately implies that $\mathcal{D}_{k_1}\mathcal{H}^\mathrm{f}(F)\cdot \mathcal{D}_{k_2}\mathcal{H}^\mathrm{f}(F)\subseteq \mathcal{D}_{k_1+k_2}\mathcal{H}^\mathrm{f}(F)$.
\end{proof}

\begin{remark} The filtered vector space $\mathcal{H}^\mathrm{f}(F)$ is not a filtered coalgebra. If it were, the reduced coproduct of depth 1 elements would be 0, but from~\eqref{eq:lincoproduct} we see that in \( \mathcal{H}^\mathrm{f}(F) \) already \( \Delta' \Li_2(x) = \Li_1(x) \otimes \log^{\mathcal{H}}(x) \neq 0 \). Similarly, the filtered vector space $\mathcal{L}^\mathrm{f}(F)$ is not a filtered Lie coalgebra.
\end{remark}

\begin{corollary} \label{cor:depthwithoutn0}
The space $\mathcal{D}_k\mathcal{L}^\mathrm{f}_{n}(F)$ is spanned by $\LiL_{n_1,\dots,n_{\ell}}(a_1,a_2,\dots,a_{\ell})$ as $(a_1,\dots,a_{\ell})$ runs over $\ell$-tuples in $(F^{\times})^{\ell}$, $\ell\le k$ and $n_1+\dots+n_{\ell}=n$.
\end{corollary}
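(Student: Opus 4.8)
The inclusion $\supseteq$ is immediate: each $\LiL_{n_1,\dots,n_\ell}(a_1,\dots,a_\ell)$ with $\ell\le k$ and $n_1+\dots+n_\ell=n$ has weight $n$ and depth $\ell\le k$, so it lies in $\mathcal{D}_k\mathcal{L}^\mathrm{f}_n(F)$ by definition of the depth filtration. For the reverse inclusion I would begin from the spanning set furnished by \autoref{DepthIIandCor}: since the depth filtration on $\mathcal{L}^\mathrm{f}(F)$ is the one induced from $\mathcal{H}^\mathrm{f}(F)$ through the projection $p$, and $\mathcal{D}_k\mathcal{H}^\mathrm{f}_n(F)$ is spanned by the $\I(0;x_1,\dots,x_n;x_{n+1})$ with at most $k$ nonzero entries among $x_1,\dots,x_n$, the space $\mathcal{D}_k\mathcal{L}^\mathrm{f}_n(F)$ is spanned by their images $\IL(0;x_1,\dots,x_n;x_{n+1})=p\bigl(\I(0;x_1,\dots,x_n;x_{n+1})\bigr)$. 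The plan is to rewrite each such generator as a $\Q$-linear combination of the normalized polylogarithms $\LiL_{n_1,\dots,n_\ell}$.

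First I would discard the degenerate generators. Using \autoref{def:IteratedIntegralsViaCorrelators} and the cyclic symmetry \ref{Acycle}, if $x_{n+1}=0$ then $\IL(0;x_1,\dots,x_n;0)=\cor(x_1,\dots,x_n,0)-\cor(0,x_1,\dots,x_n)=0$, so we may assume $x_{n+1}\ne0$. If moreover all interior entries vanish, then for $n\ge2$ the element $\I(0;0,\dots,0;x_{n+1})=(\log^{\mathcal{H}}(x_{n+1}))^{n}/n!$ is decomposable, so $\IL(0;0,\dots,0;x_{n+1})=0$ in $\mathcal{L}^\mathrm{f}(F)$. The case $n=1$ is trivial, since $\mathcal{L}^\mathrm{f}_1(F)\cong F^{\times}_\Q$ is spanned by $\LiL_1(a)=-\log(a-1)$ as $a$ runs over $F^{\times}$. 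Thus for $n\ge2$ it suffices to treat a generator with $x_{n+1}\ne0$ and with at least one, but at most $k$, nonzero interior entries.

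Let $c$ be the first nonzero interior entry. Since $n\ge2$, affine invariance (\autoref{cor:int:affine}) with scaling factor $c^{-1}\in F^{\times}$ gives
\[
\IL(0;x_1,\dots,x_n;x_{n+1})=\IL(0;c^{-1}x_1,\dots,c^{-1}x_n;c^{-1}x_{n+1}),
\]
whose first nonzero interior entry is now $1$ (zeros stay zero, and the other nonzero entries stay nonzero). The resulting word has the shape $0^{m_0},1,\dots$ and equals $(-1)^{\ell}\LiL_{m_0\semi n_1,\dots,n_\ell}(a_1,\dots,a_\ell)$, where $\ell\le k$ is the number of nonzero interior entries, $m_0+n_1+\dots+n_\ell=n$, and the $a_i\in F^{\times}$ are the successive ratios of the scaled nonzero interior values together with the endpoint. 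It then remains only to eliminate the $n_0$-parameter $m_0$, that is, to reduce to $m_0=0$.

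This last reduction is the crux, and the step I expect to be the main obstacle. I would induct on the number $m_0$ of leading zeros. Writing the word as $(0,v)$ with $v=(0^{m_0-1},1,\dots)$ of length $n-1\ge1$, the shuffle relation of \autoref{PropShuffleIter} applied to the one-letter word $(0)$ and $v$ reads
\[
\sum_{j=0}^{n-1}\IL\bigl(0;\,v_1,\dots,v_j,0,v_{j+1},\dots,v_{n-1};\,x_{n+1}\bigr)=0.
\]
Because the first $m_0-1$ letters of $v$ are zeros followed by a $1$, the insertions $j=0,\dots,m_0-1$ all reproduce our word $0^{m_0},1,\dots$ (so it occurs with multiplicity exactly $m_0$), whereas every insertion $j\ge m_0$ places the extra zero strictly after the first $1$ and hence yields a word with exactly $m_0-1$ leading zeros. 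Solving for our term expresses it, up to the factor $-1/m_0$, as a combination of words with one fewer leading zero, the same weight $n$, and the same number $\ell\le k$ of nonzero entries (only zeros have been moved). Iterating down to $m_0=0$ produces exactly the normalized polylogarithms $(-1)^{\ell}\LiL_{n_1,\dots,n_\ell}(a_1,\dots,a_\ell)$ with $\ell\le k$ and $\sum_i n_i=n$, completing the argument. The delicate point to verify carefully is precisely this bookkeeping, that the leading-zero term appears with multiplicity exactly $m_0$, so that each step strictly lowers the number of leading zeros while preserving both weight and depth.
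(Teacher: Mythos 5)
Your proof is correct and follows essentially the same route as the paper: reduce to the generators $\IL(0;x_1,\dots,x_n;x_{n+1})$ of \autoref{DepthIIandCor} and then use the shuffle relation of \autoref{PropShuffleIter} to strip leading zeros while preserving the number of non-zero entries. The only (harmless) difference is that you shuffle a single $0$ into the rest of the word, so the target term recurs with multiplicity $m_0$ and you divide by $-m_0$, whereas the paper shuffles the whole block of leading zeros at once so that the maximal term is unique; your explicit treatment of the degenerate cases and of the normalization of the first non-zero entry is a welcome addition.
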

\begin{proof}
Fix \( (x_1,\ldots, x_{n+d}) = (0,\ldots,0, y_1,\ldots,y_d) \), with \( y_1 \neq 0 \).  Then in the formula from 
\autoref{PropShuffleIter},
\[\sum_{\sigma \in \Sigma_{n,d}} \IL(0;x_{\sigma(1)},\dots,x_{\sigma(n+d)}; b) = 0\,,\]
there is a unique term \( I(0; 0,\ldots,0, y_1,\ldots,y_d; b) \) with $n$ leading 0's.  Hence one can recursively express it via terms with strictly fewer leading 0's.  Since every term has the same number of non-zero entries (i.e. depth as a multiple polylogarithm), this shows that $\mathcal{D}_k\mathcal{L}^\mathrm{f}_{n}(F)$ is spanned by $\IL(0;x_{1},\dots,x_{n}; x_{n+1})$ with $x_1\ne0$ and at most $k$ non-zero entries among $x_1,\dots,x_n$, which implies the claim.
\end{proof}

\begin{proposition} Let $F$ be an infinite field and $s_0\in F$. The specialization map $\Sp_{s\to s_0}\colon \mathcal{L}^\mathrm{f}(F(s))\lra\mathcal{L}^\mathrm{f}(F)$ is a map of filtered vector spaces. Moreover, for $f_1(s),\dots,f_n(s)\in F(s)^{\times}$ such that $\nu_{s_0}(f_i)$ is not equal to zero for some $1\leq i\leq n$, we have 
\[
\Sp_{s \to s_0} \LiL_{n_0\semi{}n_1,\ldots,n_k}(f_1(s),\ldots,f_k(s))\in \mathcal{D}_{k-1}\mathcal{L}^\mathrm{f}_n(F)\,.
\]
\end{proposition}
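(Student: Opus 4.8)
The plan is to phrase everything through the characterization of the depth filtration in terms of correlators from \autoref{DepthIIandCor}: the subspace $\mathcal{D}_k\mathcal{L}^\mathrm{f}_n(F)$ is spanned by $\cor(x_0,\dots,x_n)$ with at most $k+1$ non-zero entries. For the filtered-map assertion I would take such a spanning correlator $\cor(x_0,\dots,x_n)\in\mathcal{D}_k\mathcal{L}^\mathrm{f}_n(F(s))$, so that at least $n-k$ of the $x_i$ vanish, and compute $\Sp_{s\to s_0}\cor(x_0,\dots,x_n)=\cor(\overline{y}_0,\dots,\overline{y}_n)$ as in \autoref{SectionSpecializationAn}, writing $x_i=y_i\pi^m+a$ with $\pi=s-s_0$. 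The key observation is that every position with $x_i=0$ produces the \emph{same} residue $\overline{y}_i=\overline{-\pi^m a}$, so these at least $n-k$ positions fall into one cluster; translating the result by $\overline{-\pi^m a}$ via \ref{Atranslate} turns them all into $0$ and leaves at most $k+1$ non-zero entries. Hence $\Sp_{s\to s_0}$ preserves $\mathcal{D}_k$.

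For the depth-drop I would unfold the definition. By \eqref{EquationLeibnizGeneral} and \autoref{def:IteratedIntegralsViaCorrelators},
\[
\LiL_{n_0\semi{}n_1,\ldots,n_k}(f_1,\ldots,f_k)=(-1)^k\big(\cor(w_1,\dots,w_n,b_k)-\cor(0,w_1,\dots,w_n)\big),
\]
where $b_j=f_1\cdots f_j$, the right endpoint is $b_k$, and the word $(w_1,\dots,w_n)$ has exactly the $k$ non-zero letters $1,b_1,\dots,b_{k-1}$ (all remaining letters being $0$). The second correlator $\cor(0,w_1,\dots,w_n)$ already has only $k$ non-zero entries, so it lies in $\mathcal{D}_{k-1}\mathcal{L}^\mathrm{f}_n(F(s))$; by the filtered property just established, its specialization lies in $\mathcal{D}_{k-1}\mathcal{L}^\mathrm{f}_n(F)$. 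Everything therefore reduces to showing that $\cor(w_1,\dots,w_n,b_k)$, which carries the $k+1$ non-zero values $1,b_1,\dots,b_{k-1},b_k$, drops from $\mathcal{D}_k$ to $\mathcal{D}_{k-1}$ after specialization.

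This is the crux. I would set $v_j=\nu_{s_0}(b_j)$ for $0\le j\le k$ (with $b_0=1$, so $v_0=0$). Since $v_j-v_{j-1}=\nu_{s_0}(f_j)$ and some $\nu_{s_0}(f_i)\neq 0$ by hypothesis, the sequence $v_0,\dots,v_k$ is not constant; put $\mu=\min_j v_j$, so at least one $v_j>\mu$. Assuming $n\ge 2$ (the weight-one case is trivial as $\mathcal{D}_0\mathcal{L}^\mathrm{f}_1=\mathcal{L}^\mathrm{f}_1$), I may use \ref{Ascale} in $F(s)$ to replace the correlator by its $\pi^{-\mu}$-rescaling, bringing all entries into $\mathcal{O}$ with minimal valuation $0$. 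When the word contains an interior $0$ (which happens whenever $n_0\ge1$ or some $n_i\ge2$), I base the specialization at this $0$ and reduce modulo the maximal ideal: the entry at the $b_j$-position survives iff $v_j=\mu$, while all $0$-positions map to $0$. As at least one $v_j$ exceeds $\mu$, the specialized correlator retains at most $k$ non-zero entries, hence lies in $\mathcal{D}_{k-1}\mathcal{L}^\mathrm{f}_n(F)$ by \autoref{DepthIIandCor}. The degenerate case with no interior $0$ (that is, $n_0=0$ and all $n_i=1$) is even easier: then $\cor(w_1,\dots,w_n,b_k)=\cor(1,b_1,\dots,b_{k-1},b_k)$ is a correlator on $k+1$ points none of which is $0$, so translating by \ref{Atranslate} to make its first entry $0$ already displays at most $k$ non-zero entries, placing it in $\mathcal{D}_{k-1}$ unconditionally. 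Combining the two correlators gives the claim.

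I expect the main obstacle to be purely the bookkeeping in evaluating $\Sp_{s\to s_0}\cor(w_1,\dots,w_n,b_k)$: one must check that basing the well-defined specialization at an interior $0$ (or translating to create one) is legitimate, and that the preliminary rescaling to clear negative valuations does not disturb the zero-positions. The conceptual content is the single clean observation that non-constancy of the valuation vector $(v_j)$ forces at least one of the $k+1$ marked points to be absorbed into the bulk under specialization, which is exactly what lowers the depth by one.
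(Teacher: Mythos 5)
Your proof is correct and follows essentially the same route as the paper: reduce modulo $\mathcal{D}_{k-1}$ to the single correlator carrying the $k+1$ non-zero points $1,b_1,\dots,b_k$, treat the degenerate case $n=k$ separately, and otherwise base the specialization at a zero entry and use the non-constancy of the valuation sequence $\nu_{s_0}(b_j)$ to force at least one marked point to vanish. (Only a cosmetic slip: with $x_i=y_i\pi^m+a$ the common residue of the zero positions is $\overline{-a\pi^{-m}}$ rather than $\overline{-\pi^m a}$, which does not affect the argument.)
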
 
	
\begin{proof}
The first claim is clear from the above remarks on depth of iterated integrals,
since specialization does not increase the number of non-zero entries. 
For the second claim, by \autoref{DepthIIandCor}
    \begin{equation} \label{eq:polylogcorrelator}
    \begin{aligned}[c]
    & \LiL_{n_0;n_1,\dots,n_k}(a_1,a_2,\dots,a_k)\\
    & -(-1)^{k}\cor(\underbrace{0,\dots,0,1}_{n_0+1},\underbrace{0,\dots,0,a_1}_{n_1},\dots,\underbrace{0,\dots,0,a_1a_2\dots a_{k}}_{n_k}) \in \mathcal{D}_{k-1}\mathcal{L}^\mathrm{f}(F)\,,
    \end{aligned}
    \end{equation}
and so it would suffice to show that
    \begin{equation} \label{eq:licordepth}
    \Sp_{s \to s_0}\cor(\underbrace{0,\dots,0,1}_{n_0+1},\underbrace{0,\dots,0,f_1(s)}_{n_1},\dots,\underbrace{0,\dots,0,f_1(s)f_2(s)\dots f_{k}(s)}_{n_k}) \in \mathcal{D}_{k-1}\L_{n}(F)\,.
    \end{equation}
Here as usual $n=n_0+\dots+n_k$.
If $n=k$, then more generally for any $x_0,\dots,x_n$ we have $\cor(x_0,\dots,x_n)=\cor(0,x_1-x_0,\dots,x_n-x_0)\in\mathcal{D}_{k-1}\L_{n}(F)$, and there is nothing to prove. Otherwise, $n>k$, and at least one of the arguments of $\cor$ in~\eqref{eq:licordepth} is~$0$, so to compute specialization we can use \autoref{rem:spec0}. But by the conditions of the proposition, we have $\nu_{s_0}(f_1\dots f_i)\ne \nu_{s_0}(f_1\dots f_{i-1})$, so after specialization one of the non-zero elements will vanish, and the resulting correlator will have $\le k$ non-zero entries, hence we get an element of $\mathcal{D}_{k-1}\L_{n}(F)$ as claimed.
\end{proof}

\begin{remark}
In the special case where exactly one \( \nu_{s_0}(f_j) \) is non-zero for some \( 1 \leq j \leq n \), i.e., when exactly the one function \( f_j(s) \) specializes to \( 0 \) at \( s = s_0 \) and all remaining functions are finite and non-zero at \( s = s_0 \), one obtains more precisely that
\[
    \Sp_{s\to s_0} \Li_{n_0 \semi{} n_1,\ldots,n_k}(f_1(s), \ldots, f_k(s)) = 0 \,.
\]
\end{remark}

\begin{proposition} \label{cor:int:dihedral}
Let $n\ge2$, let $k$ be the number of non-zero entries among $x_1,\dots,x_{n}$, and let $1\le i\le n$ be such that $x_ix_{n+1}\ne0$. Then
 \begin{equation} \label{eq:iterdihedral}
    \IL(0; x_1,\ldots,x_n; x_{n+1}) 
   -\IL(0; x_{i+1}, \ldots, x_{n+1}, x_1, \ldots, x_{i-1}; x_{i})\in \mathcal{D}_{k-1}\mathcal{L}^\mathrm{f}(F)\,.
 \end{equation}
\end{proposition}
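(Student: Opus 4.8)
The plan is to reduce everything to correlators and then perform a careful count of non-zero entries. First I would rewrite both iterated integrals using \autoref{def:IteratedIntegralsViaCorrelators}, which gives
\[
\IL(0; x_1,\ldots,x_n; x_{n+1}) = \cor(x_1,\ldots,x_n,x_{n+1}) - \cor(0,x_1,\ldots,x_n)
\]
and
\[
\IL(0; x_{i+1},\ldots,x_{n+1},x_1,\ldots,x_{i-1}; x_i) = \cor(x_{i+1},\ldots,x_{n+1},x_1,\ldots,x_{i-1},x_i) - \cor(0,x_{i+1},\ldots,x_{n+1},x_1,\ldots,x_{i-1}).
\]
The key observation is that the two ``upper'' correlators, namely $\cor(x_1,\ldots,x_n,x_{n+1})$ and $\cor(x_{i+1},\ldots,x_{n+1},x_1,\ldots,x_{i-1},x_i)$, are cyclic rotations of the same $(n+1)$-tuple, hence equal by the cyclic symmetry \ref{Acycle}. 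They therefore cancel in the difference, leaving
\[
\IL(0; x_1,\ldots,x_n; x_{n+1}) - \IL(0; x_{i+1},\ldots,x_{n+1},x_1,\ldots,x_{i-1}; x_i) = \cor(0,x_{i+1},\ldots,x_{n+1},x_1,\ldots,x_{i-1}) - \cor(0,x_1,\ldots,x_n).
\]

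Next I would bound the depth of each of the two remaining correlators by counting non-zero entries and invoking \autoref{DepthIIandCor}, which places a correlator with exactly $m$ non-zero arguments in $\mathcal{D}_{m-1}\mathcal{L}^\mathrm{f}_n(F)$. The correlator $\cor(0,x_1,\ldots,x_n)$ has exactly $k$ non-zero entries, since the leading $0$ contributes nothing, so it lies in $\mathcal{D}_{k-1}\mathcal{L}^\mathrm{f}_n(F)$. For the other correlator, its non-zero arguments are those of $\{x_1,\ldots,x_n\}\setminus\{x_i\}$ together with $x_{n+1}$: because $x_i\neq 0$, removing it drops the count to $k-1$, and because $x_{n+1}\neq 0$, reinserting it brings the total back to exactly $k$, so this correlator also lies in $\mathcal{D}_{k-1}\mathcal{L}^\mathrm{f}_n(F)$. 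Hence their difference does too, which is the claim.

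The step requiring the most care --- and the reason the hypothesis $x_i x_{n+1}\neq 0$ is exactly what is needed --- is this final count: both correlators must land in $\mathcal{D}_{k-1}$ rather than merely $\mathcal{D}_k$. It is precisely $x_i\neq 0$ that guarantees deleting $x_i$ genuinely lowers the depth, and $x_{n+1}\neq 0$ that guarantees the reinserted boundary restores it to $k$ without exceeding it; if either hypothesis failed, the count would be off by one and the conclusion would weaken to membership in $\mathcal{D}_k$. The remainder of the argument --- the passage to correlators via \autoref{def:IteratedIntegralsViaCorrelators} and the cyclic cancellation via \ref{Acycle} --- is purely formal, so I expect no serious combinatorial obstacle beyond this bookkeeping. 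This also explains the remark preceding the statement that the cyclic symmetry for iterated integrals is ``more involved'': unlike the reversal symmetry of \autoref{PropDihedralIter}, it holds only modulo the lower-depth subspace.
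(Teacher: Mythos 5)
Your proof is correct and follows essentially the same route as the paper's: rewrite both integrals as correlators via \autoref{def:IteratedIntegralsViaCorrelators}, cancel the two cyclically-equivalent ``upper'' correlators by \ref{Acycle}, and bound the remaining two correlators by counting non-zero entries and applying \autoref{DepthIIandCor}. One small overstatement in your closing commentary: the hypothesis \( x_{n+1} \neq 0 \) is not actually forced by the depth count --- if \( x_{n+1} = 0 \) the second correlator would have only \( k-1 \) non-zero entries and land in \( \mathcal{D}_{k-2} \subseteq \mathcal{D}_{k-1} \), so only \( x_i \neq 0 \) is essential to the counting argument (and indeed the paper's proof invokes only that condition).
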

\begin{proof}
Writing iterated integrals in terms of correlators and using \ref{Acycle} we get that~\eqref{eq:iterdihedral} equals
\begin{equation*}
    \cor(0,x_{i+1}, \ldots, x_{n+1}, x_1, \ldots, x_{i-1}) - \cor(0, x_1,\ldots,x_n)\,.
\end{equation*}
Since $x_i\ne 0$, the number of non-zero entries in each correlator above is at most $k$, and the claim follows by~\autoref{DepthIIandCor}.
\end{proof}

\subsection{The quasi-shuffle relation}  \label{sec:propertiesofmp}
Our goal is to formulate a version of the quasi-shuffle relation, an identity for multiple polylogarithms which is easy to see for the power series presentation \eqref{FormulaPolylogarithm} but is rather nontrivial in our setting, see \cite{HoffmanQuasi00}, \cite[\S2.5]{Gon01}, \cite[Theorem 1.2]{GonPeriods02}, \cite{Mal20}. To state the quasi-shuffle relation, recall the construction of  quasi-shuffle algebras \cite{HoffmanQuasi00,HoffmanIharaQuasi17}. 

Consider the alphabet \( \mathcal{A} = \{ (n_i, x_i) \mid n_i \in \mathbb{Z}_{>0}, x_i \in F^\times  \} \).  There is a product on \( \mathcal{A} \) defined by \( (n,x) \cdot (m,y) = (n+m, x y) \).  Let \( \mathbb{Q}\langle \mathcal{A} \rangle \) be the vector space of \( \mathbb{Q} \)-linear combinations of words (non-commutative monomials) over \( \mathcal{A}\), which we write as \( [n_1,x_1\mid n_2,x_2\mid \cdots\mid  n_k,x_k] \) for notational ease.  Finally define the quasi-shuffle product \( \star \) on \( \mathbb{Q}\langle \mathcal{A} \rangle \) recursively by
	\begin{equation}\label{eqn:quasishuffle:def}
	\left\{ \begin{aligned}
			\mathbbm{1} \star \omega  &= \omega \star \mathbbm{1} = \omega \,, \\
			 \quad a \omega \star b\eta& = (a\cdot b) \big(\omega \star \eta\big) + b \big( a \omega \star \eta \big) + a \big( \omega \star b \eta \big) \,,
	\end{aligned} \right.
	\end{equation}
	where \( a, b \in \mathcal{A} \) are letters, \( \omega, \eta \in \mathbb{Q}\langle\mathcal{A}\rangle \) are words, and \( \mathbbm{1} \) denotes the empty word.  This gives \( (\mathbb{Q}\langle\mathcal{A}\rangle, \star) \) the structure of a quasi-shuffle algebra.  
	
Define a $\Q$-linear map $\Li \colon (\mathbb{Q}\langle \mathcal{A} \rangle, \star)\lra  \mathcal{H}^\mathrm{f}(F)$ by the formula
\begin{equation}
	\Li([n_1,x_1 \mid  n_2, x_2 \mid  \cdots \mid  n_k,x_k]) = \Li_{n_1,n_2,\ldots,n_k}(x_1,x_2,\ldots,x_k) \,.
\end{equation}

\begin{proposition} The map $\Li$ is a homomorphism of algebras.
\end{proposition}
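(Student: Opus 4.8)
The plan is to prove, for all words \( u, v \) over \( \mathcal{A} \), the identity \( \Li(u)\Li(v) = \Li(u \star v) \) in \( \mathcal{H}^\mathrm{f}(F) \); since \( \Li \) is \( \Q \)-linear and \( \star \) is bilinear, it suffices to check this on pairs of basis words, and we proceed by induction on the total weight \( n \). The natural framework is the two-step pattern already used for the shuffle and distribution relations in \autoref{PropertiesIteratedIntegrals} and \autoref{LemmaDistributionInt}: set \( X = \Li(u)\Li(v) - \Li(u\star v) \in \mathcal{H}^\mathrm{f}_n(F) \), show that the reduced coproduct \( \Delta'(X) \) vanishes and that the projection \( p(X) \) to indecomposables vanishes, and then conclude \( X = 0 \) from the fact that \( p \) restricts to an isomorphism between the primitives of \( \mathcal{H}^\mathrm{f}(F) \) and \( \ker\delta \subseteq \mathcal{L}^\mathrm{f}(F) \) \cite{MM65}.

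I expect the vanishing of \( \Delta'(X) \) to follow by the inductive hypothesis. Writing \( \Delta(\Li(u)\Li(v)) = \Delta\Li(u)\cdot\Delta\Li(v) \) and expanding \( \Delta\Li(u\star v) \) through the iterated-integral coproduct of \autoref{PropertiesIteratedIntegrals}, every proper tensor factor has weight strictly less than \( n \) and is again a \( \Q \)-linear combination of multiple polylogarithms; applying the lower-weight quasi-shuffle relation to these factors, the two expansions should match term by term. This bookkeeping parallels the coproduct computations of \autoref{lem:intascor} and \autoref{LemmaDistributionInt}, and once the contributions are organized by the deconcatenation structure of the underlying words it ought to be routine.

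The essential point, and the main obstacle, is \( p(X) = 0 \). As \( \Li(u)\Li(v) \) is decomposable it dies under \( p \), so the claim reduces to the quasi-shuffle relation \emph{at the level of the Lie coalgebra}: if \( u \star v = \sum_w c_w\, w \), then \( \sum_w c_w \LiL(w) = 0 \) in \( \mathcal{L}^\mathrm{f}_n(F) \). Unlike the shuffle relation~\eqref{FormulaShuffleCor}, the quasi-shuffle produces ``merged'' letters \( (n_i + m_j,\, x_i y_j) \) whose polylogarithms have depth one less than the generic terms; these are invisible to the iterated-integral structure and cannot be obtained by shuffling alone. I would establish this Lie-level identity by the functional-equation mechanism of \autoref{SectionRelations}, exactly as in \autoref{LemmaShuffleCor}, \autoref{LemmaDistributionCor}, and \autoref{lem:correv}: introduce an auxiliary variable \( t \), scaling the arguments coming from \( u \), assemble the corresponding combination \( R \in \mathcal{A}_n(F(t)) \), and verify — again by the induction hypothesis, applied to the lower-weight factors of \( \delta R \) — that \( \delta R \) projects to \( 0 \) in \( \bigwedge^2 \mathcal{L}^\mathrm{f}(F(t)) \). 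Then \( \Sp_{t\to 0}R - \Sp_{t\to 1}R \) (or a specialization to \( \infty \)) lies in \( \mathcal{R}_n(F) \), hence vanishes in \( \mathcal{L}^\mathrm{f}(F) \).

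The delicate part is choosing \( R \) so that the two specializations return precisely the combination \( \sum_w c_w \LiL(w) \): the depth-reduced merged terms should appear through the collision of arguments at the special fibre, consistent with the depth-drop of specializations recorded in \autoref{DepthIIandCor} and the subsequent proposition. Checking that no spurious lower-depth terms survive, and that \( \delta R \) indeed lands in products, is where the computation concentrates; the remainder of the argument is formal.
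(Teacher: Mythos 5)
Your two-step architecture --- kill the reduced coproduct of \( X = \Li(u)\Li(v) - \Li(u\star v) \) by induction on the weight, then kill \( p(X) \) by the functional-equation mechanism of \autoref{SectionRelations}, and conclude by the Milnor--Moore fact that \( p \) is injective on primitives --- is exactly the architecture of the proof the paper points to (\cite[Proposition 3.9]{Rud20}; the paper's own proof is only this citation). The reduction of \( p(X)=0 \) to the Lie-level statement \( \LiL(u\star v)=0 \) for nonempty \( u,v \) is also correct, since \( p \) kills the product \( \Li(u)\Li(v) \). So the approach is the right one.

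There is, however, a genuine gap precisely at the step you flag as delicate, and the one concrete choice you propose does not survive inspection. If you scale \emph{all} arguments coming from \( u \) by \( t \), then a word of \( u_t\star v \) whose initial letters come from \( v \) has \emph{several} arguments with positive valuation at \( t=0 \) (e.g. \( \LiL_{1,1,1}(y,tx_1,tx_2) \) has two), so the vanishing criterion you need --- the remark following the specialization proposition, which requires \emph{exactly one} \( f_j \) to have nonzero valuation --- does not apply; the \( t\to0 \) fibre is then a degenerate stuffle combination of lower-depth but generically nonzero terms (e.g.\ \( \Sp_{t\to0}\cor(1,y,tx_1y,t^2x_1x_2y)=\cor(1,\overline{y},0,0)\neq0 \)), and you have only traded the relation for another one. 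The fix is to deform a \emph{single} letter, say \( x_1\mapsto tx_1 \): every word of \( u_t\star v \) then contains exactly one argument (\( tx_1 \) or a merged \( tx_1y_j \)) vanishing at \( t=0 \), each term specializes to \( 0 \), and \( \Sp_{t\to1}R=\LiL(u\star v) \) yields the claim once \( \delta R=0 \) is known. Two further points you treat as routine deserve more care: (i) the matching \( \Delta'(\Li(u\star v))=\Delta'(\Li(u)\Li(v)) \) is not mere deconcatenation bookkeeping, because the tensor factors of the Goncharov coproduct in \autoref{PropertiesIteratedIntegrals} are iterated integrals with leading zeros and general endpoints, which must first be rewritten as multiple polylogarithms --- generally with \( n_0>0 \), so the induction has to be run for the extended family \( \Li_{n_0\semi{}n_1,\ldots,n_k} \) (equivalently for \( \LiL_\bullet \) as in \autoref{rk:generalquasishuffle}) rather than for \( n_0=0 \) alone; and (ii) the vanishing \( \delta R=0 \) in \( \bigwedge^2\mathcal{L}^\mathrm{f}(F(t)) \) should be deduced from (i) by applying \( p\otimes p \), not re-derived separately. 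As written, the proposal names the right strategy but leaves its decisive computation unperformed.
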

\begin{proof} The proof of \cite[Proposition 3.9]{Rud20} works in our setting. 
\end{proof}

\begin{remark}\label{rk:generalquasishuffle}
A version of the quasi-shuffle relation which holds for \( \Li_{n_0\semi{}n_1,\ldots,n_k} \), with \( n_0>0 \), was proven in \cite[Proposition 3.10]{Rud20}. To state it, consider a $\Q$-linear map 
$\LiL_{\bullet} \colon \mathbb{Q}\langle \mathcal{A}\rangle \longrightarrow \mathcal{L}^\mathrm{f}(F)$ defined by the formula
\begin{equation}\label{eqn:Libullet}
	\LiL_{\bullet} [n_1,x_1 \mid  n_2, x_2 \mid  \cdots \mid  n_k,x_k]=\sum_{n_0\geq 0} \LiL_{n_0\semi{}n_1,n_2,\ldots,n_k}(x_1,x_2,\ldots,x_k) \,.
\end{equation}
Then quasi-shuffle relation $\LiL_\bullet(\omega \star \eta) = 0$ holds for  \( \omega, \eta \neq \mathbbm{1} \in \mathbb{Q}\langle \mathcal{A} \rangle \).
\end{remark}

	Using the quasi-shuffle structure, and the dihedral symmetries of correlators (resp. iterated integrals, modulo lower depth), one can now establish a number of useful general identities in $\mathcal{L}^\mathrm{f}(F)$. 

	\begin{lemma}\label{lem:stuffle:antipode}
		The following symmetry holds
		\begin{equation}\label{eqn:stuffleantipode}
			\LiL_{n_0\semi{}n_1,\ldots,n_k}(x_1,\ldots,x_k) + (-1)^{k} \LiL_{n_0\semi{}n_k,\ldots,n_1}(x_k,\ldots,x_1) \in \mathcal{D}_{k-1}\mathcal{L}^\mathrm{f}_n(F) \,.
		\end{equation}
		
		\begin{proof}
            Recall that shuffle product \( \shuffle \), which is defined on the alphabet \( \mathcal{A} \), is given by
				\begin{equation}\label{eqn:shuffledef}
				\left\{ \begin{aligned}
			\mathbbm{1} \shuffle \omega  &= \omega \shuffle \mathbbm{1} = \omega \,, \\
			\quad a \omega \shuffle b\eta& = b \big( a \omega \shuffle \eta \big) + a \big( \omega \shuffle b \eta \big) \,.
			\end{aligned} \right.
			\end{equation}
			Given two words \( \eta, \omega \in \Q\langle A \rangle \) with total length \( k \), then
            \[
                \eta \star \omega = \eta \shuffle \omega + \text{(words of length $<k$)} \,,
            \]
            as we are neglecting the terms which come from the factor \( (a\cdot b) \big(\omega \star \eta\big) \) in \eqref{eqn:quasishuffle:def}, which are necessarily of length \( <k \).   The following identity holds, by induction, in any shuffle algebra (see \cite[Eqn. (29)]{Gon05}, \cite[Thm~4.1]{Gon01B})
			\[
				\sum_{i=0}^k (-1)^i \cdot (a_1 a_2 \cdots a_i \shuffle a_k \cdots a_{i+2} a_{i+1}) = 0 \,.
			\]
            Specializing to letters \( a_i = (n_i, x_i)  \in \mathcal{A} \), we obtain the following, for the quasi-shuffle sum
			\[
				\sum_{i=0}^k (-1)^i (a_1 a_2 \cdots a_i) \star (a_k \cdots a_{i+2} a_{i+1}) = \text{(words of length $<k$)}\,.
			\]
            Now apply the $\Q$-linear map \( \LiL_\bullet \) from \eqref{eqn:Libullet}.  The RHS is in \( \mathcal{D}_{k-1} \mathcal{L}^\mathrm{f}(F) \), while for \( 1 \leq i \leq k-1 \) the summand on the LHS vanishes by \autoref{rk:generalquasishuffle} as it consists of two non-empty words.  Only the \( i = 0, k \) summands survive, giving
			\[
				(-1)^k \LiL_\bullet( a_1 a_2 \cdots a_k) + \LiL_\bullet(a_k \cdots a_{2} a_{1}) \in \mathcal{D}_{k-1} \mathcal{L}^\mathrm{f}(F) \,,
			\]
			Extracting the weight-graded pieces of \( \LiL_\bullet \) gives us the desired identity.
		\end{proof}
	\end{lemma}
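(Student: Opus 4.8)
The plan is to deduce this reversal symmetry from the antipode relation in a shuffle algebra, transported through the quasi-shuffle structure via the vanishing in \autoref{rk:generalquasishuffle}. The essential structural observation is that the depth of a multiple polylogarithm equals the length (number of letters) of the corresponding word in $\Q\langle\mathcal{A}\rangle$, so that the depth filtration $\mathcal{D}_\bullet\mathcal{L}^\mathrm{f}(F)$ matches the filtration by word length; in particular, any $\Q$-linear combination of words of length $<k$ is sent by $\LiL_\bullet$ into $\mathcal{D}_{k-1}\mathcal{L}^\mathrm{f}(F)$.

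First I would record the purely combinatorial antipode identity valid in any shuffle algebra: writing $a_i = (n_i,x_i) \in \mathcal{A}$ and using the shuffle product $\shuffle$ from \eqref{eqn:shuffledef}, one has
\[
\sum_{i=0}^k (-1)^i \bigl(a_1 a_2 \cdots a_i \shuffle a_k a_{k-1} \cdots a_{i+1}\bigr) = 0 \,,
\]
which is proved by a short induction on $k$ and expresses that signed reversal is the shuffle antipode. Next I would invoke the comparison between the two products: for words of total length $k$ one has $\eta \star \omega = \eta \shuffle \omega + (\text{length} < k)$, the lower terms being precisely those produced by the merging summand $(a\cdot b)(\omega\star\eta)$ in \eqref{eqn:quasishuffle:def}. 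Substituting this into the identity above rewrites it as
\[
\sum_{i=0}^k (-1)^i \bigl(a_1\cdots a_i\bigr) \star \bigl(a_k\cdots a_{i+1}\bigr) = (\text{length} < k) \,.
\]

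Then I would apply $\LiL_\bullet$ from \eqref{eqn:Libullet}. The right-hand side lands in $\mathcal{D}_{k-1}\mathcal{L}^\mathrm{f}(F)$ by the length/depth correspondence, and for each interior index $1 \le i \le k-1$ the summand is a quasi-shuffle of two non-empty words, hence annihilated by \autoref{rk:generalquasishuffle}. Only the boundary terms $i=0,k$ survive, giving
\[
(-1)^k \LiL_\bullet(a_1\cdots a_k) + \LiL_\bullet(a_k\cdots a_1) \in \mathcal{D}_{k-1}\mathcal{L}^\mathrm{f}(F) \,;
\]
extracting the weight-$n$ graded piece (recall $\LiL_\bullet$ sums over the hidden parameter $n_0\ge0$) yields the stated relation. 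The main obstacle is not conceptual but careful bookkeeping: one must confirm that depth coincides with word length so the error terms genuinely lie in $\mathcal{D}_{k-1}$, and that the vanishing of \autoref{rk:generalquasishuffle} applies verbatim to every interior summand. Handling the parameter $n_0$, packaged inside $\LiL_\bullet$, requires attention but reduces to reading off a single weight-graded component at the end.
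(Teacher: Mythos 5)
Your proposal is correct and follows essentially the same route as the paper's proof: the shuffle antipode identity, the comparison $\eta \star \omega = \eta \shuffle \omega + (\text{length} < k)$, applying $\LiL_\bullet$ so that the interior summands vanish by the quasi-shuffle relation of \autoref{rk:generalquasishuffle} and the error terms land in $\mathcal{D}_{k-1}$, then extracting weight-graded components. The only addition you make explicit --- that word length corresponds to depth so lower-length words map into $\mathcal{D}_{k-1}\mathcal{L}^\mathrm{f}(F)$ --- is used implicitly in the paper and is immediate from the definition of $\LiL_\bullet$ and the depth filtration.
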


	\begin{remark}
		One can more precisely describe the lower depth terms, using the ``star-version'' of the multiple polylogarithms (cf. the multiple zeta star values and the  interpolated multiple zeta values \cite{YamamotoInterpolation13}).  See \S4.2.1.2 and Lemma 4.2.2 in \cite{GlanoisThesis16} or Lemma 3.3 in \cite{GlanoisBasis16}, in particular.  Alternatively \cite[\S3]{HoffmanQuasi20}, and the proof of Theorem 1.3 \cite{HofChMtVSym22} give a result stated in terms of a general interpolated product.
	\end{remark}

	\begin{lemma}[Inversion, {cf. Goncharov \cite[\S2.6]{Gon01}, Panzer \cite{PanzerParity} for an analytic version}] \label{LemmaInversionGeneral}
		In weight \( n = n_0 + n_1 + \cdots + n_k \geq 2 \), the following symmetry holds,
		\[
			\LiL_{n_0\semi{}n_1,\ldots,n_k}(x_1,\ldots,x_k) - (-1)^{n + k} \LiL_{n_0 \semi{} n_1,\ldots,n_k}(x_1^{-1}, \ldots, x_k^{-1}) \in \mathcal{D}_{k-1} \mathcal{L}^\mathrm{f}_n(F) \,,
		\]
		
		\begin{proof}
			This follows by the reversal symmetry of integrals \eqref{eqn:revsym} (in \autoref{PropDihedralIter}), the cyclic symmetry of integrals \eqref{eq:iterdihedral}  (in \autoref{cor:int:dihedral}), the reversal symmetry \eqref{eqn:stuffleantipode} (in \autoref{lem:stuffle:antipode}) of multiple polylogarithms, and the affine invariance of integrals \eqref{eqn:int:affine} (in \autoref{cor:int:affine}).  Write \( X_{i,j} = \prod_{\ell=i}^j x_\ell \) for brevity (with \( X_{1,0} = 1 \) as an empty product), and introduce the shorthand \( \{0\}^a = 0,\ldots,0 \) ($a$ repetitions).  Then
			{
			\begin{align*}
				& \LiL_{n_0\semi{}n_1,\ldots,n_k}(x_1,\ldots,x_k) \\
				& = \begin{aligned}[t] (-1)^k \IL(0; \{0\}^{n_0}, X_{1,0}, \{0\}^{n_1\-1}, X_{1,1}, \{0\}^{n_2\-1}, \ldots, X_{1,k\-1}, \{0\}^{n_k\-1}; X_{1,k} ) \end{aligned} \\[1ex]
			\tag{Eqn. \eqref{eqn:revsym}}	& = \begin{aligned}[t] (-1)^{k+n+1} \IL(0; \{0\}&^{n_k\-1}, X_{1,k\-1}, \{0\}^{n_{k\-1}\-1} , X_{1,k\-2}, \ldots, \\[-0.5ex] 
			& \{0\}^{n_2\-1}, X_{1,1}, \{0\}^{n_1\-1}, X_{1,0}, \{0\}^{n_0} ; X_{1,k} ) \end{aligned} \\[1ex]
			\tag{Eqn. \eqref{eqn:int:affine}}	& = \begin{aligned}[t] (-1)^{k+n+1} \IL(0; \{0\}&^{n_k\-1}, \tfrac{X_{1,k\-1}}{X_{1,k\-1}}, \{0\}^{n_{k\-1}\-1} ,   \tfrac{X_{1,k\-2}}{X_{1,k\-1}}, \ldots, \\[-0.5ex] & \{0\}^{n_2\-1}, \tfrac{X_{1,1}}{X_{1,k\-1}}, \{0\}^{n_1\-1}, \tfrac{X_{1,0}}{X_{1,k\-1}}, \{0\}^{n_0} ; \tfrac{X_{1,k}}{X_{1,k\-1}} ) \end{aligned} \\[1ex]
				& = (-1)^{n+1} \LiL_{n_k\-1 \semi{} n_{k\-1}, \ldots, n_1, n_0\+1}\big( \tfrac{X_{1,k\-2}}{X_{1,k\-1}} \mkern-2mu\big/\mkern-2mu \tfrac{X_{1,k\-1}}{X_{1,k\-1}}, \ldots,  \tfrac{X_{1,0}}{X_{1,k\-1}} \mkern-2mu\big/\mkern-2mu \tfrac{X_{1,1}}{X_{1,k\-1}} , \tfrac{X_{1,k}}{X_{1,k\-1}} \mkern-2mu\big/\mkern-2mu \tfrac{X_{1,0}}{X_{1,k\-1}} \big) \\
				& = (-1)^{n+1} \LiL_{n_k\-1 \semi{} n_{k\-1}, \ldots, n_1, n_0\+1}\big( x_{k-1}^{-1}, \ldots, x_1^{-1} , X_{1,k} \big) \\[1ex]
			\tag{Eqn. \eqref{eqn:stuffleantipode}}	\hspace{1em} & \equiv (-1)^{k + n} \LiL_{n_k\-1 \semi{} n_0\+1, n_1, \ldots, n_{k\-1}}\big( X_{1,k}, x_1^{-1}, \ldots, x_{k-1}^{-1} \big) \pmod{\mathcal{D}_{k-1}\mathcal{L}^\mathrm{f}_n(F)} \,.
			\end{align*}
			}
			Now we rewrite the last expression again as an iterated integral \( \IL \) using \eqref{EquationLeibnizGeneral}, giving
			{
			\begin{align*}
				& = (-1)^{n} \IL(0; \{0\}^{n_k\-1}, 1, \{0\}^{n_0}, \tfrac{X_{1,k}}{X_{1,0}}, \{0\}^{n_1\-1}, \tfrac{X_{1,k}}{X_{1,1}}, \ldots, \tfrac{X_{1,k}}{X_{1,k-2}}, \{0\}^{n_{k\-1}-1} ; \tfrac{X_{1,k}}{X_{1,k-1}} )  \\
				\tag{Eqn. \eqref{eq:iterdihedral}}
				& \begin{aligned} \equiv (-1)^{n} \IL(0; \{0\}^{n_0}, \tfrac{X_{1,k}}{X_{1,0}}, \{0\}^{n_1\-1}, \tfrac{X_{1,k}}{X_{1,1}}, \ldots, \tfrac{X_{1,k}}{X_{1,k-2}}, \{0\}^{n_{k\-1}-1} , \tfrac{X_{1,k}}{X_{1,k-1}}, \{0\}^{n_k\-1}; 1 ) \qquad \\ \pmod{\mathcal{D}_{k-1}\mathcal{L}^\mathrm{f}_n(F)}  \end{aligned}\\[1ex]
				\tag{Eqn. \eqref{eqn:int:affine}} 
				& = (-1)^{n} \IL(0; \{0\}^{n_0}, \tfrac{1}{X_{1,0}}, \{0\}^{n_1\-1}, \tfrac{1}{X_{1,1}}, \ldots, \tfrac{1}{X_{1,k-2}}, \{0\}^{n_{k\-1}-1} , \tfrac{1}{X_{1,k-1}}, \{0\}^{n_k\-1}; \tfrac{1}{X_{1,k}} )  \\
				& = (-1)^{n+k} \LiL_{n_0\semi{}n_1,\ldots,n_k}(x_1^{-1}, \ldots, x_k^{-1}) \,.
			\end{align*}
   }
			 This is equivalent to the result we wanted to show, and so the lemma is proven.
		\end{proof}
	\end{lemma}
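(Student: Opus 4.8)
The plan is to transport the identity from multiple polylogarithms to iterated integrals, exploit the integral symmetries established in \autoref{sec:propertiesII} and \autoref{sec:MultiplePolylogarithms}, and transport back, working throughout modulo \( \mathcal{D}_{k-1}\mathcal{L}^\mathrm{f}_n(F) \). Writing \( X_{1,j} = x_1 \cdots x_j \) for the partial products (with \( X_{1,0}=1 \)), the defining formula \eqref{EquationLeibnizGeneral} presents \( \LiL_{n_0\semi{}n_1,\ldots,n_k}(x_1,\ldots,x_k) \) as \( (-1)^k \) times an iterated integral \( \IL(0; \ldots; X_{1,k}) \) whose interior entries are the \( X_{1,j} \) separated by blocks of zeros. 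The strategy is then a chain of four rewriting steps that converts the argument string \( (x_1,\ldots,x_k) \) into \( (x_1^{-1},\ldots,x_k^{-1}) \) while tracking the accumulated sign.

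First I would apply the reversal symmetry \eqref{eqn:revsym} of \autoref{PropDihedralIter} to reverse the word, introducing a sign \( (-1)^{n+1} \). I would then apply affine invariance \eqref{eqn:int:affine} of \autoref{cor:int:affine}, rescaling by \( 1/X_{1,k-1} \); this replaces each partial product \( X_{1,j} \) by the ratio \( X_{1,j}/X_{1,k-1} \), and in particular sends the entry \( X_{1,k-1} \) to the value \( 1 \) occupying the canonical position in the integral-to-polylogarithm dictionary. Reinterpreting the result through \eqref{EquationLeibnizGeneral} yields a multiple polylogarithm whose index string is essentially reversed—the auxiliary parameter \( n_0 \) migrating to the tail as \( n_0+1 \), with \( n_k-1 \) appearing as the new auxiliary parameter—and whose arguments simplify to \( x_{k-1}^{-1}, \ldots, x_1^{-1}, X_{1,k} \).

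Next I would invoke the reversal symmetry of multiple polylogarithms \eqref{eqn:stuffleantipode} from \autoref{lem:stuffle:antipode} to reverse the index and argument strings, at the cost of a sign \( (-1)^k \) and an error term lying in \( \mathcal{D}_{k-1}\mathcal{L}^\mathrm{f}_n(F) \). Converting back to an iterated integral, I would use the cyclic symmetry \eqref{eq:iterdihedral} of \autoref{cor:int:dihedral}—again valid only modulo depth \( k-1 \)—to rotate the integration path so that the single nonzero endpoint entry is moved into position. A final application of affine invariance \eqref{eqn:int:affine}, rescaling this time by \( 1/X_{1,k} \), collapses each remaining entry of the form \( X_{1,k}/X_{1,j} \) into \( 1/X_{1,j} \), so that consecutive ratios become the pure inverses \( x_i^{-1} \). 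Reinterpreting once more via \eqref{EquationLeibnizGeneral} should deliver exactly \( (-1)^{n+k}\LiL_{n_0\semi{}n_1,\ldots,n_k}(x_1^{-1},\ldots,x_k^{-1}) \).

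The main obstacle will be the bookkeeping rather than any conceptual difficulty: one must correctly accumulate the signs \( (-1)^{n+1} \), \( (-1)^k \), and the further signs from the two re-expansions, and must verify that the two affine rescalings genuinely telescope the partial-product ratios into single inverses \( x_i^{-1} \). One must also be disciplined about working consistently modulo \( \mathcal{D}_{k-1}\mathcal{L}^\mathrm{f}_n(F) \), since both the stuffle-antipode step and the cyclic-symmetry step hold only up to lower-depth corrections; the weight-\( n \), depth-\( k \) content is preserved at each stage, but I must ensure that the silently discarded \( \mathcal{D}_{k-1} \) terms never feed back into the top-depth part of the identity.
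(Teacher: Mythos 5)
Your proposal follows essentially the same route as the paper's proof, in the same order: expand via \eqref{EquationLeibnizGeneral}, reverse with \eqref{eqn:revsym}, rescale by \( 1/X_{1,k-1} \) via \eqref{eqn:int:affine}, reinterpret as \( \LiL_{n_k-1\semi{}n_{k-1},\ldots,n_1,n_0+1}(x_{k-1}^{-1},\ldots,x_1^{-1},X_{1,k}) \), apply the quasi-shuffle antipode \eqref{eqn:stuffleantipode}, rotate with \eqref{eq:iterdihedral}, rescale by \( 1/X_{1,k} \), and reinterpret. The only quibble is that the antipode step contributes \( (-1)^{k+1} \) rather than \( (-1)^k \) (since \eqref{eqn:stuffleantipode} reads \( A + (-1)^k B \in \mathcal{D}_{k-1} \)), which is exactly the kind of bookkeeping you flagged and is needed to land on the stated sign \( (-1)^{n+k} \).
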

 
\subsection{Goncharov Conjectures}\label{sec:gonconj}
Goncharov suggested several conjectures about the structure of the Lie coalgebra of mixed Tate motives $\L^{\M}(F).$ In this section, we state the analogues of these conjectures for $\mathcal{L}^\mathrm{f}(F).$ We start with the connection with $K$-theory.

\begin{conjecture} \label{ConjecturePolylogsKtheory} Let $F$ be an infinite field. For $1\leq i \leq n$ there  exist isomorphisms
\begin{equation}\label{FormulaPolylogsKtheoryLie}
\textup{ch}_{n,i}\colon \gr_{\gamma}^{n} K_{2n-i}(F)_{\Q} \lra  H^{i}(\mathcal{H}^\mathrm{f}(F),\Q)_n.
\end{equation}
\end{conjecture}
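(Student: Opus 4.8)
The plan is to split the statement into a formal reduction and a genuine comparison with $K$-theory. For the formal part, recall that $\mathcal{H}^\mathrm{f}(F) = U^c(\mathcal{L}^\mathrm{f}(F))$, so its graded dual is the universal enveloping algebra of the graded Lie algebra dual to $\mathcal{L}^\mathrm{f}(F)$. By the standard bar-construction computation of $\mathrm{Tor}$ over an enveloping algebra (Quillen, Milnor--Moore), the cobar complex defining $H^{i}(\mathcal{H}^\mathrm{f}(F),\Q)_n$ is quasi-isomorphic to the Chevalley--Eilenberg complex $(\bigwedge^{\bullet}\mathcal{L}^\mathrm{f}(F), \delta)$, giving a graded isomorphism
\[
H^{i}(\mathcal{H}^\mathrm{f}(F),\Q)_n \cong H^{i}_{\mathrm{CE}}(\mathcal{L}^\mathrm{f}(F))_n \,.
\]
This is already implicit in the identification $H^1(\mathcal{H}^\mathrm{f}(F),\Q) = \ker(\delta)$ used above, and it reduces the conjecture to matching the cohomology of the single complex $\mathcal{L}^\mathrm{f}_n(F) \xrightarrow{\delta} \bigwedge^{2}\mathcal{L}^\mathrm{f}(F) \to \cdots$ with $\gr_{\gamma}^{n} K_{2n-i}(F)_{\Q}$. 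Note the range $1\le i\le n$ is forced on both sides: for $i>n$ the complex is zero in weight $n$, matching Beilinson--Soul\'e vanishing.

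For the comparison I would construct the maps $\textup{ch}_{n,i}$ as rational regulators (Chern classes from $K$-theory into the polylogarithmic cohomology) and prove bijectivity by induction on the weight $n$, anchored at the two accessible extreme degrees. In top degree $i=n$ one has $\gr_{\gamma}^{n} K_{n}(F)_{\Q} \cong K_n^{M}(F)_{\Q}$ by Nesterenko--Suslin--Totaro, while $H^{n}_{\mathrm{CE}}(\mathcal{L}^\mathrm{f}(F))_n$ is the cokernel of $\delta$ landing in $\bigwedge^{n}\mathcal{L}^\mathrm{f}_1(F) = \bigwedge^{n} F^{\times}_{\Q}$; using \autoref{LemmaWeightOne}, together with the fact that $\delta$ on weight-two elements realizes the symbols $(1-x)\wedge x$, one checks that the incoming differential imposes precisely the Steinberg relations, so this cokernel is $K_n^{M}(F)_{\Q}$. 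In degree $i=1$ one has $H^1 = \ker(\delta)$ and target $\gr_{\gamma}^{n} K_{2n-1}(F)_{\Q}$; here the relations $\mathcal{R}_n(F)$ were defined through $\Sp_{t\to0}-\Sp_{t\to1}$ exactly so that the homotopy-invariance isomorphism \eqref{FormulaMapH1} mirrors the Beilinson--Soul\'e statement for $K_{2n-1}$, and this is the structural input to exploit. The low weights should fall out unconditionally: $n=1$ is \autoref{LemmaWeightOne}, and $n=2$ follows from Matsumoto's theorem for $K_2$ together with Suslin's identification of $\gr_{\gamma}^{2} K_3(F)_{\Q}$ with $B_2(F)_{\Q}$, matched against the weight-two part of $\mathcal{L}^\mathrm{f}(F)$.

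A second, more direct route is available whenever the category of mixed Tate motives exists, e.g.\ for number fields. There one combines the motivic realization $r_{\M}\colon \mathcal{H}^\mathrm{f}(F)\to\mathcal{H}^{\M}(F)$ with the motivic identity \eqref{FormulaPolylogsKtheory}: if $r_{\M}$ is an isomorphism, it induces an isomorphism on cobar cohomology, and \eqref{FormulaPolylogsKtheory} transports to $\mathcal{H}^\mathrm{f}(F)$ to yield $\textup{ch}_{n,i}$. This shifts the burden onto proving $r_{\M}$ surjective (that motivic multiple polylogarithms span $\mathcal{H}^{\M}(F)$) and injective (that the rational functional equations encoded in $\mathcal{R}_n(F)$ already account for all motivic relations).

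The hard part, and the reason the statement is posed as a conjecture rather than a theorem, is exactly the isomorphism claim in the ``new'' degree $i=1$ for $n\geq 3$. Surjectivity of $\textup{ch}_{n,1}$ is a form of the generalized Zagier conjecture, asserting that multiple polylogarithms generate $\gr_{\gamma}^{n} K_{2n-1}(F)_{\Q}$; injectivity demands that the equations produced by $\Sp_{t\to0}-\Sp_{t\to1}$ exhaust all relations among regulator values. Both directions are open for general $F$ beyond weight $3$ (where even the known cases rest on Goncharov's deep analysis of the weight-three complex), so I would expect to establish the conjecture unconditionally only for $n\le 3$ and, in higher weight, to reduce it to Goncharov's conjectures that the polylogarithmic complexes compute motivic cohomology.
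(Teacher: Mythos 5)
The statement you were given is a \emph{conjecture}: the paper contains no proof of it, and your proposal, correctly, does not claim to supply one, so there is no proof in the paper to compare yours against. What can be compared is the partial content, and there your write-up tracks the paper closely. The reduction of $H^{i}(\mathcal{H}^\mathrm{f}(F),\Q)_n$ to the Chevalley--Eilenberg cohomology of $\mathcal{L}^\mathrm{f}(F)$ is exactly the Milnor--Moore identification the paper invokes in the remark following \autoref{ConjectureB_3}; your treatment of the top degree $i=n$ --- the cokernel of the differential into $\bigwedge^{n}F^{\times}_{\Q}$, with the Steinberg relations supplied by the weight-two cobracket through \autoref{PropositionWeightTwo}, hence $K_n^{M}(F)_{\Q}\cong\gr_{\gamma}^{n}K_{n}(F)_{\Q}$ --- reproduces the paper's own argument for the one case it declares known; and your reading of the $\Sp_{t\to0}-\Sp_{t\to1}$ construction and of \eqref{FormulaMapH1} as the formal mirror of homotopy invariance of $K_{2n-1}$ matches the motivation given in the introduction. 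Your assessment of what is open (surjectivity of $\mathrm{ch}_{n,1}$ as a generalized Zagier conjecture, injectivity as completeness of the rational functional equations) is also the correct one.

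Three small corrections. First, the vanishing of $H^{i}(\mathcal{H}^\mathrm{f}(F),\Q)_n$ for $i>n$ is automatic from the grading (each tensor factor of the cobar complex has weight at least $1$) and is not an instance of Beilinson--Soul\'e vanishing, which concerns non-positive cohomological degrees. Second, your alternative route through the motivic realization trades one open statement for another: the paper only \emph{conjectures} that $r_{\M}$ is an isomorphism (surjectivity already being Goncharov's spanning conjecture), so transporting \eqref{FormulaPolylogsKtheory} along $r_{\M}$ does not, even for number fields, yield more than a conditional statement; the paper deliberately presents these as independent conjectures. Third, for $n=2$, $i=1$ the identification of $\ker\bigl(\delta\colon B_2(F)_{\Q}\to\bigwedge^2F^{\times}_{\Q}\bigr)$ with $\gr_{\gamma}^{2}K_{3}(F)_{\Q}$ requires Suslin's theorem on $K_3^{\mathrm{ind}}$ and the Bloch group; the paper uses Suslin's results only in the weaker form \eqref{FormulaHomotopy} and does not claim even this case of the conjecture as established, so you should be careful before asserting it "falls out unconditionally."
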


\begin{remark}
The maps $\textup{ch}_{n,i}$ should be proportional to the Chern class maps from algebraic $K$-theory to motivic cohomology. 
\end{remark}

Our next goal is to formulate Goncharov's Freeness and Depth conjectures.  Our definition of the Lie coalgebra $\mathcal{L}^\mathrm{f}(F)$ is inspired by Goncharov's definition of higher Bloch groups $\mathcal{B}_n(F)$ defined in \cite{Goncharov94} and \cite{Gon95B} (where the same object is called $\mathcal{B}_n'(F))$. It is easy to see that there exists a map $i\colon \mathcal{B}_n(F)\lra \mathcal{L}^\mathrm{f}_n(F)$ sending $\{a\}_n\in \mathcal{B}_n(F)$ to $\LiL_n(a)\in \mathcal{L}^\mathrm{f}_n(F);$ the image of $i$ is the subspace $\mathcal{D}_1\mathcal{L}^\mathrm{f}_n(F)$ spanned by classical polylogarithms. We expect that $i$ induces an isomorphism $\mathcal{B}_n(F)\xrightarrow{\cong?} \mathcal{D}_1\mathcal{L}^\mathrm{f}_n(F)$ but do not see how to prove that $i$ is injective.

Our next goal is to formulate the Depth Conjecture, which gives a conjectural motivic description of the depth filtration on $\mathcal{L}^\mathrm{f}(F).$ Assume that \( \delta = \sum_{1 \leq i \leq j} \delta_{ij} \), where \( \delta_{ij} \colon \L_{i+j}(F) \to \L_i(F) \wedge \L_j(F) \). The truncated cobracket is the map
\begin{equation*}
	\overline{\delta} \colon \mathcal{L}^\mathrm{f}(F) \to \bigwedge\nolimits^2 \mathcal{L}^\mathrm{f}(F) 
\end{equation*}
defined by the formula \( \overline{\delta} = \sum_{2 \leq i \leq j} \delta_{ij} \), i.e. \( \overline{\delta} \) is obtained from \( \delta \) by omitting the \( \L_1(F) \wedge \L_{n-1}(F) \) component from the cobracket. The truncated cobracket can be restricted to the quotient $\overline{\mathcal{L}^\mathrm{f}}(F):=\mathcal{L}^\mathrm{f}(F)/\mathcal{L}^\mathrm{f}_1(F)$ giving it a structure of a Lie coalgebra, which carries the \emph{depth filtration} induced from $\mathcal{L}^\mathrm{f}(F).$

\begin{lemma} Let $F$ be an infinite field. For $k\geq 1$ we have 
\[
\overline{\delta}(\mathcal{D}_k\overline{\mathcal{L}^\mathrm{f}}(F))\subseteq \sum_{k_1+k_2=k} \mathcal{D}_{k_1}\overline{\mathcal{L}^\mathrm{f}}(F) \wedge  \mathcal{D}_{k_2}\overline{\mathcal{L}^\mathrm{f}}(F) 
\]
In other words, the depth filtration gives $\bigl(\overline{\mathcal{L}^\mathrm{f}}(F),\bar{\delta}\bigr)$ the structure of a filtered Lie coalgebra.
\end{lemma}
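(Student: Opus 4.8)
The plan is to verify the stated inclusion on the spanning set of $\mathcal{D}_k\overline{\mathcal{L}^\mathrm{f}}(F)$ furnished by \autoref{DepthIIandCor}, by expanding the truncated cobracket of a correlator and tracking the number of non-zero entries in each of the two resulting factors. Since weight one is killed in $\overline{\mathcal{L}^\mathrm{f}}(F)$, I may assume $n\geq2$, and it suffices to treat a generator $\cor(x_0,\dots,x_n)\in\mathcal{D}_k\mathcal{L}^\mathrm{f}_n(F)$ which, by \autoref{DepthIIandCor}, can be taken to have at most $k+1$ non-zero entries among $x_0,\dots,x_n$; write $N\leq k+1$ for this number. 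I will repeatedly use two facts from the same proposition: a correlator with at most $m+1$ non-zero entries lies in $\mathcal{D}_{m}$, and a correlator of weight $\geq2$ with at most one non-zero entry vanishes (along with $\llp 0,\dots,0\rrp=0$ from \ref{Azero}).

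First I would expand $\overline\delta\cor(x_0,\dots,x_n)$ using \eqref{FormulaCoproductCorrelators}. By definition $\overline\delta$ retains only those summands $\cor(x_j,x_{j+1},\dots,x_{j+i})\wedge\cor(x_j,x_{j+i+1},\dots,x_{j+n})$ in which both factors have weight at least $2$, i.e.\ $2\leq i\leq n-2$. The key combinatorial observation is that the two factors share exactly the single entry $x_j$: the entries $x_{j+1},\dots,x_{j+i}$ occur only in the first factor, and $x_{j+i+1},\dots,x_{j+n}$ only in the second. Writing $A$ and $B$ for the number of non-zero entries of the first and second factor respectively, this disjointness yields the bookkeeping identity
\[
A+B = N + \#\{\,x_j\neq0\,\}\leq (k+1)+1 \,,
\]
where $\#\{x_j\neq0\}$ is $1$ if $x_j\neq0$ and $0$ otherwise, accounting for the only possible double count.

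Now I would read off the depths. By \autoref{DepthIIandCor} the first factor lies in $\mathcal{D}_{A-1}\mathcal{L}^\mathrm{f}(F)$ and the second in $\mathcal{D}_{B-1}\mathcal{L}^\mathrm{f}(F)$; a factor with $A=0$ vanishes by \ref{Azero}, and a weight $\geq2$ factor with exactly one non-zero entry vanishes, so every genuinely contributing factor has $A,B\geq2$ and hence depth $\geq1$. Combining this with the bookkeeping identity gives $(A-1)+(B-1)=A+B-2\leq k$, so each surviving term lies in $\mathcal{D}_{k_1}\mathcal{L}^\mathrm{f}(F)\wedge\mathcal{D}_{k_2}\mathcal{L}^\mathrm{f}(F)$ with $k_1+k_2\leq k$; setting $k_1=A-1$ and using $\mathcal{D}_{B-1}\subseteq\mathcal{D}_{k-k_1}$ we may enlarge the second index to get $k_1+k_2=k$ exactly. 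Projecting to the quotient $\overline{\mathcal{L}^\mathrm{f}}(F)$, where the depth filtration is the induced one, yields the claimed inclusion.

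The computation is routine once set up; the point to get right is the bookkeeping identity, which hinges on the fact that cutting the cyclic word at the vertex $x_j$ leaves the two pieces sharing precisely $x_j$, combined with the correct translation of ``number of non-zero entries'' into depth. It is worth emphasizing why the truncation is what makes the statement hold in its sharp form: the untruncated cobracket produces weight-one ($\log$-type) factors, as in $\delta\LiL_2(x)=\LiL_1(x)\wedge\logL(x)$, which have depth $0$ and would break the strict splitting of depth. Discarding them, and passing to $\overline{\mathcal{L}^\mathrm{f}}(F)$ where $\mathcal{D}_0\overline{\mathcal{L}^\mathrm{f}}(F)=0$ (every depth-$0$ correlator of weight $\geq2$ already vanishes), is exactly what guarantees that both factors of each surviving term have depth $\geq1$ and split the total depth as required.
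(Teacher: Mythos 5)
Your proposal is correct and follows essentially the same route as the paper's proof: both span $\mathcal{D}_k\overline{\mathcal{L}^\mathrm{f}}(F)$ by correlators with at most $k+1$ non-zero entries via \autoref{DepthIIandCor}, expand the truncated cobracket, and use the fact that the two factors overlap only in $x_j$ to get the count $(k_1+1)+(k_2+1)\le k+2$, with the vanishing of weight~$\geq 2$ factors having at most one non-zero entry handling the degenerate terms. Your bookkeeping identity $A+B=N+\#\{x_j\neq 0\}$ is just a slightly more explicit rendering of the paper's parenthetical remark.
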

\begin{proof}
In terms of correlators $\mathcal{D}_k\overline{\mathcal{L}^\mathrm{f}_{n}}(F)$, $n\ge2$ is spanned by $\cor(x_0,\dots,x_n)$ with at most $k+1$ non-zero entries among $x_i$. Evidently, in
    \[\bar{\delta} \cor(x_0,\dots, x_n)=\sum_{j=0}^n\sum_{i=2}^{n-2} \cor(x_{j}, x_{j+1}, \dots, x_{j+i})\wedge  \cor(x_{j},  x_{j+i+1}, \dots, x_{j+n})\]
the total number of non-zero $x$'s in any term on the right-hand side is $(k_1+1)+(k_2+1)\le k+2$ (since the two tuples $(x_j,x_{j+1},\dots,x_{j+i})$ and $(x_j,x_{j+i+1},\dots,x_{j+n})$ overlap only in $x_j$), and so by the second part of~\autoref{DepthIIandCor} we get the claim (since $n\ge2$, when $k_1=0$ or $k_2=0$, the corresponding term in the coproduct vanishes).
\end{proof}

From this lemma it follows that the truncated cobracket $\overline{\delta}$ vanishes on classical polylogarithms $\LiL_n(a)\in \mathcal{D}_1\mathcal{L}^\mathrm{f}(F).$ Goncharov conjectured that the converse is true; we formulate a version of his conjecture in our setting.

\begin{conjecture}\label{conj:classicalpolylog} For $n\geq 1$ an element $x\in \mathcal{L}^\mathrm{f}_{n}(F)$ is a linear combination of multiple polylogarithms $\Li_n(a)$ for $a\in F^{\times}$ if and only if $\overline{\delta}(x)=0.$
\end{conjecture}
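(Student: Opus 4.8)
The plan is to prove the two directions separately. The ``only if'' direction is already recorded in the remark following the preceding lemma: a classical polylogarithm $\LiL_n(a)$ lies in $\mathcal{D}_1\mathcal{L}^\mathrm{f}(F)$, and the lemma shows $\overline{\delta}$ vanishes on $\mathcal{D}_1\overline{\mathcal{L}^\mathrm{f}}(F)$ for weight reasons. So the entire content is the ``if'' direction: an element $x\in\mathcal{L}^\mathrm{f}_n(F)$ with $\overline{\delta}(x)=0$ must be a $\Q$-linear combination of classical polylogarithms $\LiL_n(a)$. My strategy is a descending induction on the depth of $x$, using the crucial fact established in the preceding lemma that $(\overline{\mathcal{L}^\mathrm{f}}(F),\overline{\delta})$ is a \emph{filtered} Lie coalgebra for the depth filtration. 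Concretely, I would choose $k$ minimal with $x\in\mathcal{D}_k\mathcal{L}^\mathrm{f}_n(F)$ and suppose for contradiction that $k\geq 2$; the goal is then to show the symbol of $x$ in depth $k$ vanishes, forcing $x\in\mathcal{D}_{k-1}$ and contradicting minimality. Iterating down to $k=1$ identifies $x$ with a combination of $\LiL_n(a)$ by \autoref{cor:depthwithoutn0}.

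Since $\overline{\delta}$ preserves the depth filtration, it induces on the associated graded a cobracket
\[
\gr\overline{\delta}\colon \gr^{\mathcal{D}}_k\overline{\mathcal{L}^\mathrm{f}}_n(F)\longrightarrow \!\!\! \bigoplus_{\substack{k_1+k_2=k\\ k_1,k_2\geq 1}} \!\!\! \gr^{\mathcal{D}}_{k_1}\overline{\mathcal{L}^\mathrm{f}}(F)\wedge \gr^{\mathcal{D}}_{k_2}\overline{\mathcal{L}^\mathrm{f}}(F)\,,
\]
and the hypothesis $\overline{\delta}(x)=0$ forces $\gr\overline{\delta}(\bar x)=0$ for the symbol $\bar x\in\gr^{\mathcal{D}}_k$ of $x$. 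The inductive step would follow immediately from the naive \textbf{Key Claim} that $\gr\overline{\delta}$ is injective for all $k\geq 2$: this would give $\bar x=0$ directly. As I explain below, however, this is too much to ask in general, and the statement one can realistically hope to prove is a refinement: that any depth-graded cocycle in $\ker\gr\overline{\delta}$ either is the symbol of a classical polylogarithm or fails to lift to an honest $\overline{\delta}$-cocycle in $\overline{\mathcal{L}^\mathrm{f}}(F)$.

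The hard part is exactly this Key Claim, which is the depth-graded (``first-order'') reformulation of the conjecture and carries all of the arithmetic difficulty. In the lowest weights it is vacuous or known: for $n\leq 3$ there are no admissible terms with $2\leq i\leq j$ and $i+j=n$, so $\overline{\delta}\equiv 0$, and the claim reduces to the fact that $\mathcal{L}^\mathrm{f}_3(F)$ is spanned by $\LiL_3$, an analogue of Goncharov's weight-three theorem \cite{Gon95B,Gon01}. The first nonvacuous case is $n=4$, where $\overline{\delta}=\delta_{22}$ and one must show $\gr\delta_{22}$ has trivial kernel on $\gr^{\mathcal{D}}_2$. The decisive obstacle appears only in higher weight: by analogy with the Broadhurst--Kreimer description of multiple zeta values, the kernel of the depth-graded cobracket is expected to be governed by period polynomials of cusp forms, first becoming nonzero in weight $12$, depth $2$. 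There the naive Key Claim genuinely fails, and the conjecture can hold only because such an exceptional depth-graded cocycle does \emph{not} lift to a $\overline{\delta}$-cocycle unless it is already classical --- equivalently, the modular obstruction is detected by the lower-depth part of the full $\overline{\delta}$ and not by $\gr\overline{\delta}$ alone. Making this precise, namely computing $\ker\overline{\delta}$ in terms of the cohomology of Goncharov's depth-graded (dihedral) Lie coalgebra \cite{Gon19} and showing that the exceptional classes carry nonzero lower-depth corrections, is the crux of the problem and remains open beyond weight three.
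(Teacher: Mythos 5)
The statement you are trying to prove is \autoref{conj:classicalpolylog}, which the paper states as a \emph{conjecture} and does not prove; it is the formal-polylogarithm analogue of Goncharov's conjecture that classical polylogarithms are detected by the vanishing of the truncated cobracket, and the paper only records that it would imply the Freeness and Depth conjectures and lists the isolated known cases (\(n=4\) by Gangl \cite{gangl-4}, \(n=6, k=2\) from \cite{MR22} and \cite{Cha24}). So there is no proof in the paper to compare yours against, and your proposal does not supply one either: by your own account the Key Claim (injectivity of the depth-graded cobracket \(\gr\overline{\delta}\) on \(\gr_k^{\mathcal{D}}\) for \(k\geq 2\)) is expected to \emph{fail} once period polynomials of cusp forms enter (weight \(12\), depth \(2\) in the Broadhurst--Kreimer picture), and the proposed repair --- showing that exceptional depth-graded cocycles never lift to honest \(\overline{\delta}\)-cocycles unless they are classical --- is exactly the open content of the conjecture. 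A descending induction on depth that relies on \(\gr\overline{\delta}(\bar x)=0\Rightarrow\bar x=0\) therefore cannot be completed, and your write-up concedes this.

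That said, your analysis of the structure of the problem is sound and consistent with the paper. The ``only if'' direction is indeed immediate from the preceding lemma (for \(k=1\) every term \(\mathcal{D}_{k_1}\wedge\mathcal{D}_{k_2}\) with \(k_1+k_2=1\) has a factor of depth \(0\), which vanishes in \(\overline{\mathcal{L}^\mathrm{f}}(F)\) in weight \(\geq 2\)), and your observation that the honest statement must be phrased in terms of the full \(\overline{\delta}\) rather than its depth-graded symbol is precisely why the conjecture is formulated with \(\overline{\delta}\) and not with \(\gr\overline{\delta}\). Two smaller caveats: the base case \(n=3\) (that \(\mathcal{L}^\mathrm{f}_3(F)\) is spanned by \(\LiL_3\)) is itself not established in the formal setting --- it is part of \autoref{ConjectureB_3} --- so even the bottom of your induction is conjectural here; and reducing from ``linear combination of multiple polylogarithms of depth \(1\)'' to ``linear combination of \(\LiL_n(a)\)'' does follow from \autoref{cor:depthwithoutn0} as you say. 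The correct verdict is that this is a conjecture, your proposal is an accurate map of the difficulty, and no proof is currently available.
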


\begin{remark}
The proof of \cite[Corollary 6]{CGRR22} can be adapted to show that, for a quadratically closed field $F$, \autoref{conj:classicalpolylog} would imply that the Lie coalgebra  $\bigl(\overline{\mathcal{L}^\mathrm{f}}(F),\bar{\delta}\bigr)$ is cofree and that the depth filtration on $\overline{\mathcal{L}^\mathrm{f}}(F)$ coincides with the coradical filtration. These two statements are known as the Goncharov Freeness Conjecture and the Goncharov Depth Conjecture.
\end{remark}

For a vector space $V$ we denote by $\CoLie(V)$ the cofree conilpotent Lie coalgebra on $V$. Recall that the coLie cooperad $Lie^c$ is the graded dual of the Lie operad $Lie$. For $n\geq 0$ we have an isomorphism 
\[
\CoLie_k(V)\cong Lie^c_{k} \otimes_{\Sigma_{k}} V^{\otimes k}.
\]
For any Lie coalgebra $\mathcal{L}$ we have the structure maps 
$
\delta^{[k]}\colon \mathcal{L} \lra \CoLie_{k+1}(\mathcal{L} )$  called {\it iterated cobrackets}.  A more explicit corollary of the Depth Conjecture is the following generalization of \autoref{conj:classicalpolylog}:

\begin{conjecture}\label{conj:depth} For $n\geq 1$ an element $a\in \overline{\mathcal{L}_{n}^\mathrm{f}}(F)$ is a linear combination of multiple polylogarithms of depth at most $k$  if and only if $\overline{\delta}^{[k]}(a)=0,$ where $\overline{\delta}^{[k]}$ is the iterated truncated cobracket
\[
\overline{\delta}^{[k]}\colon \overline{\mathcal{L}^\mathrm{f}}(F)\lra \CoLie_{k+1}\big( \overline{\mathcal{L}^\mathrm{f}}(F)\big) \,.
\]
Moreover, for any $k\geq 1$ we have an isomorphism 
\[
\overline{\delta}^{[k-1]} \colon \gr_k^\mathcal{D} \overline{\mathcal{L}^\mathrm{f}}(F) \xrightarrow{\cong?} \CoLie_k\Big( \bigoplus_{n\geq2} \mathcal{D}_1\overline{\mathcal{L}^\mathrm{f}_n}(F) \Big) \,.
\]
\end{conjecture}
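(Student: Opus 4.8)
The plan is to recognize this conjecture as the cofree-coalgebra repackaging of the Freeness and Depth conjectures mentioned in the preceding remark, and to deduce it from \autoref{conj:classicalpolylog} together with the general structure theory of conilpotent Lie coalgebras. Set $V = \bigoplus_{n\geq 2}\mathcal{D}_1\overline{\mathcal{L}^\mathrm{f}_n}(F)$, the weight-graded space of classical polylogarithms. I would first isolate the formal half of the statement, which needs no conjectural input, and then reduce the substantive half to cofreeness.

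For the formal half, observe that the lemma showing that the depth filtration makes $(\overline{\mathcal{L}^\mathrm{f}}(F),\overline{\delta})$ a filtered Lie coalgebra gives $\overline{\delta}(\mathcal{D}_k\overline{\mathcal{L}^\mathrm{f}}(F))\subseteq\sum_{k_1+k_2=k}\mathcal{D}_{k_1}\overline{\mathcal{L}^\mathrm{f}}(F)\wedge\mathcal{D}_{k_2}\overline{\mathcal{L}^\mathrm{f}}(F)$ with both indices $k_i\geq 1$. Iterating, each application of $\overline{\delta}$ splits one tensor factor of depth $d$ into two factors of total depth $d$, so $\overline{\delta}^{[m]}$ produces $m+1$ factors whose depths are positive and sum to the original depth. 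Hence on $\mathcal{D}_k\overline{\mathcal{L}^\mathrm{f}}(F)$ the map $\overline{\delta}^{[k]}$ vanishes (one cannot write $k$ as a sum of $k+1$ positive integers), which is the ``only if'' direction of the first part; and $\overline{\delta}^{[k-1]}$ lands in $\CoLie_k(V)$, because every tensor factor is then forced to have depth exactly $1$. This also shows that the map in the ``moreover'' part is well defined and kills $\mathcal{D}_{k-1}$, hence descends to $\gr_k^\mathcal{D}\overline{\mathcal{L}^\mathrm{f}}(F)$.

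For the substantive half I would assume \autoref{conj:classicalpolylog}, which identifies the space of $\overline{\delta}$-primitives in each weight $\geq 2$ with $V$. The projection $\overline{\mathcal{L}^\mathrm{f}}(F)\to V$ onto primitives then extends, by the universal property of the cofree conilpotent Lie coalgebra (conilpotency being guaranteed by the depth filtration, as above), to a canonical morphism of Lie coalgebras $\Phi\colon\overline{\mathcal{L}^\mathrm{f}}(F)\to\CoLie(V)$. The crux is to prove that $\Phi$ is an isomorphism that carries the depth filtration onto the coradical filtration; here I would adapt the argument of \cite[Corollary 6]{CGRR22}, running an induction on weight and depth in which \autoref{conj:classicalpolylog} supplies, at each stage, exactly the primitives predicted by cofreeness, so that no relations among the cogenerators can appear. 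Granting that this adaptation yields both the isomorphism and the matching of filtrations, the two assertions follow formally: the iterated cobracket $\overline{\delta}^{[k-1]}$ is intertwined by $\Phi$ with the canonical projection $\gr_k^{\mathrm{corad}}\CoLie(V)\cong\CoLie_k(V)$, giving the stated isomorphism, while $\overline{\delta}^{[k]}$ vanishes precisely on the $k$-th coradical layer, which matches $\mathcal{D}_k\overline{\mathcal{L}^\mathrm{f}}(F)$, giving the ``if'' direction.

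The main obstacle is the isomorphism $\Phi$, that is, cofreeness of $(\overline{\mathcal{L}^\mathrm{f}}(F),\overline{\delta})$; dually this is freeness of the associated pro-nilpotent Lie algebra, and it is genuinely open. Even granting \autoref{conj:classicalpolylog}, the injectivity of $\Phi$ appears to require the extra functional equations available only over a quadratically closed field, such as the distribution relations of \autoref{LemmaDistributionCor} and their consequences, which is exactly why the preceding remark restricts to that case. Removing this hypothesis, and of course establishing \autoref{conj:classicalpolylog} unconditionally, are the essential difficulties, so any proof along these lines would at best be conditional.
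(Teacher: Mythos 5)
This statement is a \emph{conjecture} in the paper (indeed it is presented as a strengthening of \autoref{conj:classicalpolylog} and as an explicit form of Goncharov's Freeness and Depth conjectures); the paper offers no proof, only the remark that \autoref{conj:classicalpolylog} would imply cofreeness of \( \bigl(\overline{\mathcal{L}^\mathrm{f}}(F),\overline{\delta}\bigr) \) for a \emph{quadratically closed} field by adapting \cite[Corollary 6]{CGRR22}, and a list of the few special cases that are known (\( n=4 \); \( n=6, k=2 \); \( k \geq \lfloor n/2\rfloor \)). So no blind attempt could have produced an unconditional proof, and yours does not: you say so yourself in your last paragraph. To your credit, the part of your argument that is genuinely provable is correct and matches what the paper implicitly relies on: the lemma that the depth filtration makes \( \bigl(\overline{\mathcal{L}^\mathrm{f}}(F),\overline{\delta}\bigr) \) a filtered Lie coalgebra, with both depth indices \( k_1,k_2\geq 1 \) in the target of \( \overline{\delta} \), does give the ``only if'' direction of the first assertion and shows that \( \overline{\delta}^{[k-1]} \) kills \( \mathcal{D}_{k-1} \) and lands in \( \CoLie_k\bigl(\bigoplus_{n\geq 2}\mathcal{D}_1\overline{\mathcal{L}^\mathrm{f}_n}(F)\bigr) \), so the map in the ``moreover'' part is well defined on \( \gr^{\mathcal{D}}_k \). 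Your reduction of the remaining content to \autoref{conj:classicalpolylog} plus cofreeness is also the ``intended'' logical structure, consistent with the paper's remark.

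The gap is therefore not a technical slip but the substance of the conjecture itself: injectivity and surjectivity of your map \( \Phi \) (equivalently, cofreeness of the truncated Lie coalgebra and the identification of the depth filtration with the coradical filtration) are open, and even the conditional implication from \autoref{conj:classicalpolylog} is only claimed in the paper for quadratically closed \( F \), whereas \autoref{conj:depth} is stated for an arbitrary infinite field. Two smaller points to be careful about if you ever write up the conditional argument: first, there is no canonical projection of \( \overline{\mathcal{L}^\mathrm{f}}(F) \) onto its space of \( \overline{\delta} \)-primitives, so constructing \( \Phi \) via the universal property of the cofree conilpotent Lie coalgebra requires choosing a splitting (or arguing weight by weight as in \cite{CGRR22}), and one must check that the resulting \( \Phi \) is filtration-compatible independently of that choice; second, conilpotency of \( \overline{\delta} \) must be justified from exhaustiveness of the depth filtration, which you do correctly invoke. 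In short: the formal half of your write-up is a valid proof of the easy direction, but the statement as a whole remains a conjecture and your proposal does not (and cannot, at present) close it.
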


A few cases of the Depth conjecture are already known, and their proofs can be adapted to our setting.  Here are some references. The case $n=4$ was conjectured by Goncharov and proven by Gangl in \cite{gangl-4}. The case $n=6, k=2$ follows from the results of \cite{MR22} and \cite{Cha24}. Finally, one can show that for $k\geq \lfloor n/2 \rfloor$, $\mathcal{D}_{k} \overline{\mathcal{L}^\mathrm{f}_{n}}(F)=\overline{\mathcal{L}^\mathrm{f}_{n}}(F),$ using the same method as in the proof of \cite[Theorem~1]{Rud20}.

\subsection{Weights 2 and 3} \label{sec:wt2wt3}

We have seen that $\mathcal{L}^\mathrm{f}_1(F)\cong F^{\times}_{\Q}.$ Our next goal is to describe $\L_2(F)$ and give a conjectural description of $\L_3(F).$ There is a hope that in every weight there exists a {\it universal} functional equation for multiple polylogarithms, such that all equations in $\mathcal{R}_n(F)$ would follow from it. In weight $2$, this is the $5$-term relation (see \eqref{Equation5term}, below). In weight $3$, the Goncharov $22$-term relation (see \eqref{Equation22term}, below) is conjectured to be universal. These relations are used to define explicit candidates $B_2(F)$ and $B_3(F)$ of the corresponding higher Bloch groups $\mathcal{B}_2(F)$ and $\mathcal{B}_3(F)$. We recall these constructions below.

The Bloch group $B_2(F)$ is defined as a quotient of the group $\Z[F^{\times}\setminus \{1\}]$ by the subgroup spanned by elements 
\begin{equation}\label{Equation5term}
R_2(a,b)=[a]-[b]+\left [ \frac{b}{a}\right ] - \left [ \frac{1-a^{-1}}{1-b^{-1}}\right ] + \left [ \frac{1-a}{1-b}\right ]
\end{equation}
for $a\neq b \in F^{\times}\setminus \{1\}$. We denote the projection of $[a]$ to $B_2(F)$ by $\{a\}_2$. We have a well-defined differential $\delta \colon B_2(F)\lra  \bigwedge^2  F^{\times }$ sending $\{a\}_2$ to $a\wedge (1-a)$ constructed in \cite[\S 1]{Sus90} and specialization $\textup{Sp}_{\nu} \colon B_2(F)\lra  B_2(\mathrm{k})$ constructed in \cite[\S 5]{Sus90}. Suslin proved \cite[Corollary 5.6]{Sus90} that the kernel of the cobracket
is invariant under pure transcendental extensions of $F$:
\begin{equation}\label{FormulaHomotopy}
\ker\Big(\delta \colon B_2(F)_{\Q}\lra \bigwedge\nolimits^2 F_\Q^\times \Big)\cong \ker\Big(\delta \colon B_2(F(t))_{\Q}\lra \bigwedge\nolimits^2F(t)_\Q^\times \Big).
\end{equation}

One can easily check that \( \Li_2^{\L} \) satisfies the 5-term relation \eqref{Equation5term}, therefore we have a well-defined map $L_2\colon B_2(F) \lra \mathcal{L}^\mathrm{f}_2(F)$ sending $\{a\}_2$ to $\Li_2^{\L}(a)$.
We denote by the same symbol the rationalization $L_2\colon B_2(F)_{\Q} \lra \mathcal{L}^\mathrm{f}_2(F).$

\begin{proposition}\label{PropositionWeightTwo} The map $L_2\colon B_2(F)_{\Q} \lra \mathcal{L}^\mathrm{f}_2(F)$ is an isomorphism. 
\end{proposition}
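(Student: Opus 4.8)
The plan is to prove surjectivity of $L_2$ by an explicit spanning argument, and then to construct a left inverse $\bar M_2\colon \mathcal{L}^\mathrm{f}_2(F)\to B_2(F)_\Q$; together these force $L_2$ to be an isomorphism.

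\emph{Surjectivity.} First I would record the identity $\cor(0,1,a)=\Li^{\L}_2(a)$. From \eqref{EquationLeibnizGeneral} we have $\Li^{\L}_2(a)=-\IL(0;1,0;a)=-\bigl(\cor(1,0,a)-\cor(0,1,0)\bigr)$, where $\cor(0,1,0)=0$ by \ref{Acycle} and \ref{Aone}; the reversal symmetry \eqref{eqn:correv} together with \ref{Acycle} gives $\cor(1,0,a)=-\cor(0,1,a)$, whence $\Li^{\L}_2(a)=\cor(0,1,a)$. Next, using \ref{Atranslate} to normalise $x_0=0$ and \ref{Ascale} to normalise $x_1=1$ (the degenerate cases where two coordinates coincide reduce to $0$ via \ref{Azero}, \ref{Aone} and \ref{Acycle}), one sees that $\mathcal{A}_2(F)$, hence also $\mathcal{L}^\mathrm{f}_2(F)$, is spanned by the elements $\cor(0,1,a)=\Li^{\L}_2(a)$. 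Thus $L_2$ is surjective.

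\emph{A candidate inverse.} I would define a $\Q$-linear map $M_2$ on the free span of the symbols $\llp x_0,x_1,x_2\rrp$ by sending a nondegenerate configuration to $\{r\}_2$, where $r=\tfrac{x_2-x_0}{x_1-x_0}$ is the cross-ratio, and sending any degenerate configuration (two equal coordinates) to $0$. To see that $M_2$ descends to $\mathcal{A}_2(F)$: invariance under \ref{Atranslate} and \ref{Ascale} is immediate since $r$ is affine-invariant, \ref{Azero} and \ref{Aone} map to degenerate configurations, and the cyclic relation \ref{Acycle} becomes the identity $\{a\}_2=\{\tfrac{1}{1-a}\}_2$ in $B_2(F)_\Q$, which holds because the inversion relation $\{a\}_2=-\{a^{-1}\}_2$ and the reflection relation $\{a\}_2=-\{1-a\}_2$ are consequences of the five-term relation \eqref{Equation5term} after rationalisation. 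A direct computation, matching the weight-$2$ cobracket \eqref{FormulaCoproductCorrelators} against $\delta\{r\}_2=r\wedge(1-r)$ in $\bigwedge^2 F^{\times}_\Q$, shows that $M_2\colon\mathcal{A}_2(F)\to B_2(F)_\Q$ commutes with the cobracket. By construction $M_2(\cor(0,1,a))=\{a\}_2$, so once we know that $M_2$ kills $\mathcal{R}_2(F)$ it descends to $\bar M_2\colon\mathcal{L}^\mathrm{f}_2(F)\to B_2(F)_\Q$ with $\bar M_2\circ L_2=\id$, which yields injectivity.

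\emph{The hard step: $M_2(\mathcal{R}_2(F))=0$.} Recall that $\mathcal{R}_2(F)$ is spanned by $\Sp_{t\to0}R-\Sp_{t\to1}R$ for $R\in\mathcal{A}_2(F(t))$ with $\delta R=0$ in $\bigwedge^2 F(t)^{\times}_\Q$. Since $M_2$ commutes with $\delta$, the element $M_2(R)\in B_2(F(t))_\Q$ lies in $\ker\bigl(\delta\colon B_2(F(t))_\Q\to\bigwedge^2 F(t)^{\times}_\Q\bigr)$. By Suslin's rigidity \eqref{FormulaHomotopy} this kernel is the image of $\ker\delta$ over $F$ under the inclusion $F\hookrightarrow F(t)$; as each specialization $\Sp_{t\to c}$ (for $c\in F$) is a retraction of that inclusion, the two specializations $\Sp_{t\to0}$ and $\Sp_{t\to1}$ agree on this kernel. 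Combined with the compatibility $M_2\circ\Sp_{t\to c}=\Sp_{t\to c}\circ M_2$, this gives $M_2(\Sp_{t\to0}R-\Sp_{t\to1}R)=\Sp_{t\to0}M_2(R)-\Sp_{t\to1}M_2(R)=0$, as desired. The main obstacle is precisely this last compatibility: one must match the combinatorial specialization on configurations (the reduction of $\llp x_0,x_1,x_2\rrp$ described in \autoref{SectionSpecializationAn}) with Suslin's specialization on $B_2$ (given by the residue of the cross-ratio), verifying that both routes yield $\{\bar r\}_2$ when the reduced cross-ratio $\bar r$ avoids $\{0,1,\infty\}$ and yield $0$ otherwise. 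Once this is established, $M_2$ descends to the left inverse $\bar M_2$, and $L_2$ is an isomorphism.
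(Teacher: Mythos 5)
Your proposal is correct and follows essentially the same route as the paper: the paper also constructs the inverse map $M_2$ sending $\llp x_0,x_1,x_2\rrp$ to $\{\tfrac{x_2-x_0}{x_1-x_0}\}_2$, checks compatibility with the cobracket and specialization, and invokes Suslin's rigidity \eqref{FormulaHomotopy} to show $M_2$ annihilates $\mathcal{R}_2(F)$. Your explicit spanning argument for surjectivity and the verification of the cyclic relation via $\{a\}_2=\{\tfrac{1}{1-a}\}_2$ merely spell out steps the paper leaves as ``easy to see.''
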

\begin{proof}
To prove the proposition, we construct a map $M_2$ in the opposite direction. Notice that an element $\llp x_0, x_1, x_2 \rrp\in \mathcal{A}_2(F)$ vanishes unless  $x_0, x_1,x_2$ are distinct. Consider a map $M_2\colon \mathcal{A}_2(F)\lra B_2(F)$ sending 
$\llp x_0, x_1,x_2 \rrp$ with distinct $x_0, x_1,x_2$ to $\{\frac{x_2-x_0}{x_1-x_0}\}_2$. It is easy to see that this map commutes with the specialization and with the cobracket. Consider an element $R\in \mathcal{A}_2(F(t))$ with $\delta(R)=0$. It follows that $\delta(M_2(R))=0$ and so $\delta(M_2(R))$ lies in the kernel $\ker\Big(\delta \colon B_2(F(t))_{\Q}\lra \bigwedge\nolimits^2F(t)_\Q^\times \Big).$ By \eqref{FormulaHomotopy}, $M_2(R)$ lies in the subspace $B_2(F)\subseteq B_2(F(t))$, and so 
\[
M_2(\Sp_{t \to 0}(R)-\Sp_{t \to 1}(R))=\Sp_{t \to 0}M_2(R)-\Sp_{t \to 1}M_2(R)=0.
\]
Thus $R_2(F)$ is annihilated by $M_2$, so $M_2$ induces a map $\mathcal{L}^\mathrm{f}_2(F) \lra B_2(F)$ which we denote by the same symbol.  It is easy to see that maps $L_2$ and $M_2$ are mutually inverse.
\end{proof}

In \cite{Gon95B}, Goncharov defined a group $B_3(F)$  as a quotient of $\Z[\mathbb{P}^1_F]$ by the subgroup spanned by elements $[0], [\infty], [a]-\left[a^{-1}\right], [a]+[1-a]+\left[1 - a^{-1}\right]-[1]$ and the \emph{Goncharov  22-term relation} (note the three-fold symmetry \( a \mapsto b \mapsto c \mapsto a \) in each row)
\begin{equation}\label{Equation22term}
\begin{aligned}
R_3(a,b,c) = {} & [ca-a+1]+[ab-b+1]+[bc-c+1]\\
&+ \left[\frac{ca-a+1}{ca}\right]+\left[\frac{ab-b+1}{ab}\right]+\left[\frac{bc-c+1}{bc}\right]\\
&+ \left[\frac{bc-c+1}{(ca-a+1)b}\right]+\left[\frac{ca-a+1}{(ab-b+1)c}\right]+\left[\frac{ab-b+1}{(bc-c+1)a}\right]\\
&- \left[\frac{ca-a+1}{c}\right]-\left[\frac{ab-b+1}{a}\right]-\left[\frac{bc-c+1}{b}\right]\\
&+ \left[-\frac{(bc-c+1)a}{(ca-a+1)}\right]+\left[-\frac{(ca-a+1)b}{(ab-b+1)}\right]+\left[-\frac{(ab-b+1)c}{(bc-c+1)}\right]\\
&-\left[\frac{(bc-c+1)}{(ca-a+1)bc}\right]-\left[\frac{(ca-a+1)}{(ab-b+1)ca}\right]-\left[\frac{(ab-b+1)}{(bc-c+1)ab}\right]\\
& + [a]+[b]+[c]+[-abc]-3 \, [1]
\end{aligned}
\end{equation}
for $a,b,c \in F$ such that expressions $0/0$ and $\infty/\infty$ do not appear in the right hand side.

\begin{conjecture} \label{ConjectureB_3} For an infinite field $F$, the map 
\[
L_3\colon (B_3(F))_{\Q}\lra \mathcal{L}^\mathrm{f}_3(F)
\]
sending  $[a]$ to $\Li_3^{\L}(a)$ is an isomorphism.
\end{conjecture}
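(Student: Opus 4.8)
The plan is to mimic the proof of \autoref{PropositionWeightTwo} one weight higher, constructing an explicit inverse $M_3$ and reducing the essential input to a weight-$3$ rigidity statement.

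First I would establish that $L_3$ is well-defined, i.e. that the relations cutting out $B_3(F)$ hold among the $\Li_3^{\L}(a)$. The classes $[0],[\infty]$ and the inversion relation $[a]-[a^{-1}]$ are immediate: the first two vanish by \ref{Azero}, \ref{Aone}, \ref{Ascale}, and inversion is the $k=1$ case of \autoref{LemmaInversionGeneral}, where $(-1)^{n+k}=1$ and $\mathcal{D}_0\mathcal{L}^\mathrm{f}_3(F)=0$ (by the last part of \autoref{DepthIIandCor}). For the remaining three-term relation and the Goncharov $22$-term relation \eqref{Equation22term} I would use the paper's standard mechanism: verify that the cobracket of each combination vanishes in $\bigwedge^2\mathcal{L}^\mathrm{f}(F)$ — a purely weight-$(2,1)$ computation which reduces, via the identification $\mathcal{L}^\mathrm{f}_2(F)\cong B_2(F)_\Q$ of \autoref{PropositionWeightTwo} and the five-term relation \eqref{Equation5term}, to an identity in $B_2(F)\otimes F^\times$ — then lift the combination to a one-parameter family $R\in\mathcal{A}_3(F(t))$ lying in $\ker\delta$, and apply a convenient $\Sp_{t\to a}-\Sp_{t\to b}$ for which one boundary term degenerates, so that the relation lands in $\mathcal{R}_3(F)$ by construction.

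Next I would construct the inverse $M_3\colon\mathcal{A}_3(F)\to B_3(F)_\Q$. Following the cross-ratio recipe of $M_2$, I would use Goncharov's explicit weight-$3$ formula expressing a configuration of four points on the line — equivalently a correlator $\llp x_0,x_1,x_2,x_3\rrp$ — as an integral combination of $\{r\}_3$ for cross-ratios $r$ of the $x_i$. One then checks that $M_3$ annihilates \refAall\ and commutes with $\Sp_\nu$ and with the cobracket, where on $B_3(F)$ the cobracket is $\delta\colon B_3(F)_\Q\to (B_2(F)\otimes F^\times)_\Q$, $\{a\}_3\mapsto\{a\}_2\otimes a$, matched to the weight-$3$ component of $\delta$ on $\mathcal{L}^\mathrm{f}$ through \autoref{PropositionWeightTwo} and the weight-one identification. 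To descend $M_3$ to $\mathcal{L}^\mathrm{f}_3(F)$, take $R\in\mathcal{A}_3(F(t))$ with $\delta R=0$ in $\bigwedge^2\mathcal{L}^\mathrm{f}(F(t))$; commutation with $\delta$ gives $\delta M_3(R)=0$, so $M_3(R)$ lies in $\ker\bigl(\delta\colon B_3(F(t))_\Q\to (B_2(F(t))\otimes F(t)^\times)_\Q\bigr)$. If this kernel is invariant under $F\hookrightarrow F(t)$ — the exact weight-$3$ analogue of Suslin's \eqref{FormulaHomotopy} — then $M_3(R)$ is constant, whence $\Sp_{t\to 0}M_3(R)=\Sp_{t\to 1}M_3(R)$ and $M_3$ kills $\mathcal{R}_3(F)$. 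Finally $M_3\circ\mathrm{id}$ checks: $M_3\circ L_3=\mathrm{id}$ is the direct evaluation $M_3(\Li_3^{\L}(a))=\{a\}_3$, while $L_3\circ M_3=\mathrm{id}$ amounts to verifying $L_3(M_3(\cor(x_0,x_1,x_2,x_3)))=\cor(x_0,x_1,x_2,x_3)$ on the spanning correlators; this is precisely the weight-$3$ depth-reduction statement $\mathcal{D}_1\mathcal{L}^\mathrm{f}_3(F)=\mathcal{L}^\mathrm{f}_3(F)$ (cf. \autoref{cor:depthwithoutn0}), which simultaneously yields surjectivity of $L_3$.

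The main obstacle is the weight-$3$ rigidity used in the descent of $M_3$: in weight $2$ the whole argument rested on Suslin's homotopy invariance \eqref{FormulaHomotopy}, and the corresponding statement that $\ker\delta$ on $B_3$ does not grow under the pure transcendental extension $F\hookrightarrow F(t)$ is not available in the required generality. This is the same phenomenon flagged elsewhere in the paper (the inability to prove injectivity of $\mathcal{B}_n(F)\to\mathcal{L}^\mathrm{f}_n(F)$), and is essentially as deep as the relevant part of Goncharov's program. Absent it, one can still prove well-definedness of $L_3$ and, granting Goncharov's weight-$3$ depth reduction, its surjectivity; but injectivity — equivalently the descent of $M_3$ — remains the crux, which is why the statement is posed as a conjecture.
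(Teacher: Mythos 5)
The statement you are asked to prove is \autoref{ConjectureB_3}, which the paper states as a \emph{conjecture} and does not prove; there is no proof in the paper to compare your attempt against. Your proposal is, accordingly, not a proof but a strategy sketch, and to your credit you say so explicitly: you correctly identify that the argument of \autoref{PropositionWeightTwo} breaks down at the descent step for $M_3$, because the weight-$3$ analogue of Suslin's homotopy invariance \eqref{FormulaHomotopy} --- that $\ker\bigl(\delta\colon B_3(F)_\Q\to (B_2(F)\otimes F^\times)_\Q\bigr)$ does not grow under $F\hookrightarrow F(t)$ --- is not available. This is indeed the essential obstruction, and it is consistent with the paper's own remarks (e.g.\ the admitted inability to prove injectivity of $\mathcal{B}_n(F)\to\mathcal{L}^\mathrm{f}_n(F)$ in \autoref{sec:gonconj}).

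Two further points where your sketch understates the difficulty. First, the very construction of $M_3$ on $\mathcal{A}_3(F)$ already requires an explicit expression of $\llp x_0,x_1,x_2,x_3\rrp$ as a combination of $\{r\}_3$'s compatible with \refAall{} and with the cobracket; this is essentially equivalent to the weight-$3$ depth reduction $\mathcal{D}_1\mathcal{L}^\mathrm{f}_3(F)=\mathcal{L}^\mathrm{f}_3(F)$ (which the paper only asserts can be obtained by adapting the method of \cite{Rud20}) together with the highly nontrivial verification that the resulting formula satisfies the $B_3$-relations --- this is the content of Goncharov's work on configurations of points, not a routine check. Second, well-definedness of $L_3$ on the $22$-term relation \eqref{Equation22term} by your proposed mechanism requires exhibiting a one-parameter family in $\ker\delta\subseteq\mathcal{A}_3(F(t))$ specializing to it, which again is a substantial computation rather than a formality. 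None of this is a criticism of your reasoning --- your diagnosis of why the statement remains conjectural is accurate --- but the proposal should not be mistaken for a proof of any part beyond (plausibly) the well-definedness and, granting depth reduction, the surjectivity of $L_3$.
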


\begin{remark} 
By Milnor-Moore, $H^{i}(\mathcal{H}^\mathrm{f}(F),\Q)_n \cong H^{i}(\mathcal{L}^\mathrm{f}(F),\Q)_n$. 
\Autoref{PropositionWeightTwo} implies that $H^{n}(\mathcal{L}^\mathrm{f}(F),\Q)_n$ is isomorphic to the cokernel of the map 
\[
B_2(F)\otimes \Lambda^{n-2}F^{\times}_{\Q}\lra  \Lambda^{n}F^{\times}_{\Q},
\]
which coincides with the rationalization of the  Milnor $K$-group $K_n^M(F).$
Since $\gr_{\gamma}^{n} K_{n}(F)_{\Q}$ is isomorphic $K_n^M(F)_{\Q}$, \Autoref{ConjecturePolylogsKtheory} is known to hold for $i=n$.
\end{remark}
 
\section{Realizations} 
\label{SectionRealizations}
In this section, we discuss the Hodge and motivic realizations of the Lie coalgebra of formal multiple polylogarithms. 

\subsection{Hodge realization} \label{SectionHodgeRealization}
In \cite{Del71,Del71B}, Deligne defined mixed Hodge structures and showed that they form a neutral Tannakian category $\mathrm{MHS}_{\Q}$. A mixed Hodge--Tate structure is a mixed Hodge structure which is an iterated extension of pure Hodge structures $\Q(-n)$, for $n\in \mathbb{Z}.$ A mixed Hodge--Tate structure can be defined more explicitly as a finite dimensional vector space $H$ over $\Q$ along with an
increasing filtration $W_\bullet$ on $H$ and a decreasing filtration $F^\bullet$ on the complexification $H_{\mathbb{C}}$ such that
\[
W_{2n}H_{\mathbb{C}}=W_{2(n-1)}H_{\mathbb{C}}\oplus (W_{2n}H_{\mathbb{C}}\cap F^n H_{\mathbb{C}})\,.
\]
A framed mixed Hodge--Tate structure is a triple $[H,v,\varphi]$ of a mixed Hodge--Tate structure $H$, a non-zero linear map  $\varphi\colon \gr_0^W H\lra \Q(0)$ and a non-zero linear map $v\colon \Q(-n)\lra  \gr_{2n}^W H.$  Denote by $\mathcal{H}_{n}^{\QHod}$  the $\Q$-vector space spanned by framed mixed Hodge--Tate structures $[H,\varphi,v]$ modulo relations  $[H_1,f^{\vee}(\psi),v]=[H_2,\psi,f(v)]$ for any morphism $f\colon H_1\lra H_2,$ a linear map $v\colon \Q(-n)\lra  \gr_{2n}^W H_1$ and a linear map $\psi \colon \gr_0^W H_2 \lra \Q(0).$ The vector space $\mathcal{H}^{\QHod}=\bigoplus_{n\geq 0}\mathcal{H}_{n}^{\QHod}$ is a graded commutative Hopf algebra; the category of mixed Hodge--Tate structures is equivalent to the category of its graded comodules. We denote $\mathcal{L}^{\QHod}$ the Lie coalgebra of indecomposables of $\mathcal{H}^{\QHod}.$ It is known that $\mathcal{H}_{1}^{\QHod}\cong\mathcal{L}_{1}^{\QHod}\cong \mathbb{C}^{\times}_{\Q}$.

Our next goal is to discuss Hodge correlators: a family of elements in $\mathcal{L}^{\QHod}$ introduced by Goncharov in \cite{Gon19}, see also \cite[\S 4]{GR-zeta4} and \cite[\S 2.4]{Gon19B}. Goncharov usually works in the motivic setting and remarks that one can also work with the Hodge realization. 

We start with a linear algebra construction. For a vector space $V$ over $\Q$ we consider a graded vector space $\textup{CLie}^c(V)$ defined as a quotient of the tensor algebra on $V$ by the cyclic symmetry relations \[
v_0\otimes \dots \otimes v_{m-1} \otimes v_m-v_1\otimes\dots \otimes  v_{m} \otimes v_0
\]
and shuffle relations 
\[
\sum_{\sigma \in \Sigma_{m_1,m_2}}v_0\otimes v_{\sigma(1)} \otimes \dots \otimes v_{\sigma(m)} \,, 
\]
for $m_1+m_2=m$, with $ m_1,m_2\geq 1$.
 The space $\textup{CLie}^c(V)$ is a Lie coalgebra with the cobracket 
\begin{equation*}
\delta( v_0\otimes \dots \otimes  v_m ) =\sum_{j=0}^m\sum_{i=1}^{m-1}( v_{j}\otimes v_{j+1}\otimes \dots\otimes v_{j+i}) \wedge (v_{j}\otimes  v_{j+i+1}\otimes \dots \otimes v_{j+m}) \,,
\end{equation*}
where indices are considered modulo $m+1$.

For distinct points $x_1,\dots,x_m\in \mathbb{C}=\mathbb{P}^1\setminus \{\infty\}$, consider the curve $X=\mathbb{C}\setminus \{x_1,\dots,x_m\}$.  The vector space $H_1(X,\mathbb{Q})$ has a basis consisting of loops around the punctures $x_1,\dots, x_m$; we denote by $X^1,\dots, X^m$ the dual basis of $H^1(X,\mathbb{Q})$. For a tangent vector $v \in \mathrm{T}_{\infty}\mathbb{P}^1$ Goncharov defined a map of Lie coalgebras
\[
\textup{CLie}^c(V)\lra \mathcal{L}^{\QHod}.
\]
We will choose $v=\partial_{1/x}$ for the standard coordinate $x$ on $\mathbb{C}$. For any indices $1\le i_0,\dots,i_n\le m$ the image of the element $X^{i_0}\otimes \dots \otimes X^{i_n}$ is called a $\mathbb{\Q}$-Hodge correlator 
\[
\cor^{\QHod}(x_{i_0},\dots,x_{i_n}) \in \L_n^{\QHod}.
\]
In weight one, we have $\cor^{\QHod}(x_0,x_1)=(x_1-x_0)\in \mathbb{C}^{\times}_{\Q}$.

It is easy to see that the map 
\[
\cor^{\QHod}\colon \mathcal{A}_n(\mathbb{C})\longrightarrow \L_n^{\QHod}\, \text{ \ for\ } n\geq 1\,
\]
sending \( \llp x_0,x_1,\ldots,x_n \rrp \) to $\cor^{\QHod}(x_0,\dots,x_n)$ is a well-defined map of Lie coalgebras. Indeed,  \ref{Acycle} is a consequence of the cyclic symmetry relation in $\textup{CLie}^c(V)$ and \ref{Azero},\ref{Aone} follow from shuffle relations. Next, \ref{Alog} follows from an explicit formula the correlator in weight one. The remaining properties \ref{Atranslate}, \ref{Ascale} can be proven by the standard rigidity argument, see \cite[\S 2]{GR-zeta4}.

\begin{proposition}\label{LemmaHodgeRealization} For any $R\in \mathcal{R}_n(\mathbb{C})$ we have $\cor^{\QHod}(R)=0$. So, we have a well-defined map 
\[
r_{\QHod}\colon \mathcal{L}^\mathrm{f}_n(\mathbb{C})\longrightarrow \L_n^{\QHod}
\]
called the Hodge realization.
\end{proposition}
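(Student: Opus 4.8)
The plan is to run an induction on the weight $n$, parallel to the inductive definition of $\mathcal{R}_n$, and to upgrade $\cor^{\QHod}$ to a \emph{relative} Hodge realization over the base of the parameter $t$; I would prove the point-version (that $\cor^{\QHod}$ kills $\mathcal{R}_n(\mathbb{C})$) together with its relative analogue over function fields. The base case $n=1$ is trivial since $\mathcal{R}_1(\mathbb{C})=0$, so assume both statements for all weights $<n$. The first step is to extend the Hodge correlator to families: over a Zariski-open $U\subseteq\mathbb{A}^1_{\mathbb{C}}$ on which the arguments of a given $R=\sum_i n_i\llp f_0^i,\dots,f_n^i\rrp\in\mathcal{A}_n(\mathbb{C}(t))$ are regular (in the sense of \autoref{LemmaSpecializationRelations}), Goncharov's construction applies fibrewise and produces a framed variation of mixed Hodge--Tate structures over $U$; cf.\ \cite{Gon19}, \cite[\S4]{GR-zeta4}. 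This yields a morphism of Lie coalgebras $\cor^{\QHod}_U\colon\mathcal{A}_n(\mathbb{C}(t))\to\mathcal{L}^{\QHod}(U)$ into the Lie coalgebra of such framed variations, compatible with the inclusion of constants $\mathcal{L}^{\QHod}\hookrightarrow\mathcal{L}^{\QHod}(U)$ and with the fibrewise specialization $\Sp_{t\to t_0}$ for $t_0\in U$. Its well-definedness on $\mathcal{A}_n(\mathbb{C}(t))$ is checked exactly as in the constant case preceding \autoref{LemmaHodgeRealization}.

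Next I would exploit the cobracket hypothesis. By definition of $\mathcal{R}_n$, the element $\delta R$ projects to $0$ in $\bigwedge^2\mathcal{L}^\mathrm{f}(\mathbb{C}(t))$, i.e.\ $\delta R\in\sum_{k}\mathcal{A}_k(\mathbb{C}(t))\wedge\mathcal{R}_{n-k}(\mathbb{C}(t))$. The relative induction hypothesis gives $\cor^{\QHod}_U(\mathcal{R}_{n-k}(\mathbb{C}(t)))=0$, so $\cor^{\QHod}_U\wedge\cor^{\QHod}_U$ annihilates $\delta R$; since $\cor^{\QHod}_U$ is a morphism of Lie coalgebras this means $\delta\bigl(\cor^{\QHod}_U(R)\bigr)=0$. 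Thus the variation $\cor^{\QHod}_U(R)$ lies in $\ker\delta=H^1(\mathcal{L}^{\QHod}(U))_n$.

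The crucial input is then the Hodge-theoretic homotopy invariance: for $n\ge2$ the pullback $H^1(\mathcal{L}^{\QHod})_n\to H^1(\mathcal{L}^{\QHod}(U))_n$ is an isomorphism. This is the Hodge realization of the Beilinson--Soul\'e statement $\gr^n_\gamma K_{2n-1}(\mathbb{C})_\Q\cong\gr^n_\gamma K_{2n-1}(\mathbb{C}(t))_\Q$, and the exact analogue of the isomorphism~\eqref{FormulaMapH1} for $\mathcal{L}^\mathrm{f}$. Consequently $\cor^{\QHod}_U(R)$ is constant, i.e.\ its fibre over $t_0$ is independent of $t_0\in U$. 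To transport this to the two distinguished fibres $t_0\in\{0,1\}$, which need not lie in $U$, I would invoke that the specialization map on $\mathcal{A}$ was defined (\autoref{rem:defspec}) precisely to match the continuity of Hodge correlators from \cite[Theorem 11]{Mal20}; this identifies $\cor^{\QHod}(\Sp_{t\to t_0}R)$ with the limit of the fibre of $\cor^{\QHod}_U(R)$ as $t\to t_0$. By constancy the two limits agree, so
\[
\cor^{\QHod}(\Sp_{t\to0}R-\Sp_{t\to1}R)=0,
\]
which is exactly $\cor^{\QHod}(\mathcal{R}_n(\mathbb{C}))=0$ and makes $r_{\QHod}$ well-defined.

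I expect the two analytic inputs to be the main obstacles: (i) constructing $\cor^{\QHod}_U$ as an honest morphism of Lie coalgebras of framed variations, compatible with specialization and with the inclusion of constants; and (ii) the homotopy-invariance isomorphism $H^1(\mathcal{L}^{\QHod})_n\cong H^1(\mathcal{L}^{\QHod}(U))_n$ together with the continuity of $\cor^{\QHod}$ under specialization at the boundary points $0$ and $1$. Everything else is formal bookkeeping matching the inductive structure of $\mathcal{R}_n$, and in particular the compatibility $\cor^{\QHod}_U\circ\delta=\delta\circ\cor^{\QHod}_U$ mirrors \autoref{LemmaSpecializationCoproduct}.
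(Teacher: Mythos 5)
Your proposal follows essentially the same route as the paper: induct on the weight, promote $\cor^{\QHod}$ of an element of $\mathcal{A}_n(\mathbb{C}(t))$ to a framed variation of mixed Hodge--Tate structures over a Zariski-open $U\subseteq\mathbb{A}^1_{\mathbb{C}}$, show its cobracket vanishes using the inductive hypothesis, conclude that the variation is constant, and compare the specializations at $t=0$ and $t=1$ via the compatibility of $\Sp$ with Hodge correlators from \cite{Mal20}. Two caveats are worth recording. First, your justification of the key constancy step is off target: the isomorphism $H^1(\mathcal{L}^{\QHod})_n\xrightarrow{\sim}H^1(\mathcal{L}^{\QHod}(U))_n$ is \emph{not} a consequence of the Beilinson--Soul\'e statement for $K_{2n-1}$ under Hodge realization (indeed $H^1(\mathcal{L}^{\QHod})_n\cong\mathbb{C}/(2\pi i)^n\Q$ is far larger than the image of $\gr^n_\gamma K_{2n-1}(\mathbb{C})_\Q$, so the $K$-theoretic isomorphism transfers no information); the correct input is the rigidity of admissible variations of mixed Hodge--Tate structures --- for $n\geq 2$ Griffiths transversality forces any extension of $\Q(0)$ by $\Q(n)$ over a connected base to be constant --- which is exactly what the paper invokes (\cite[\S8]{GonPeriods02}, \cite[Lemma 2.7]{GR-zeta4}). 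The statement you need is true, but you must cite the right reason. Second, your induction hypothesis asserts the relative vanishing $\cor^{\QHod}_U(\mathcal{R}_{n-k}(\mathbb{C}(t)))=0$ directly, so to close the induction you would have to prove the relative statement in weight $n$ as well, which drags in two-parameter families over $\mathbb{C}(t,s)$; the paper avoids this by instead taking as statement (i) that any $\widetilde R\in\mathcal{A}_n(\mathbb{C}(t))$ with vanishing cobracket in $\bigwedge^2\mathcal{L}^\mathrm{f}(\mathbb{C}(t))$ gives a \emph{constant} variation, and then deducing the relative vanishing from (i) together with the point-vanishing (ii) by evaluating the constant at a regular point. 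You should adopt that packaging, or explain how to run the two-parameter version.
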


Before proving the Proposition, we make a few remarks about the behavior of Hodge correlators when we vary the points $x_1,\dots,x_m$. Consider the space
\[
\Conf_{m}(\mathbb{C})=\{(s_1,\dots,s_m)\in\mathbb{C}^m\mid s_i\neq s_j, 1\leq i<j\leq m\}\,.
\]
For any  $1\leq i_0,\dots,i_n\leq m$ we obtain an element 
\begin{equation}\label{FormulaHodgeCorrelator}
\cor^{\QHod}(s_{i_0},\dots,s_{i_n})
\end{equation}
in the Lie coalgebra of framed variations of mixed Hodge--Tate structures on $\Conf_{m}(\mathbb{C})$ (see \cite[\S3.3]{Gon13} for the definition; here $s_i$ are viewed as functions on $\Conf_{m}(\mathbb{C})$).

Next, consider rational functions $f_0,\dots,f_n\in \mathbb{C}(t)$. An element 
\begin{equation}\label{FormulaHodgeCorrelatorFunctions}
\cor^{\QHod}(f_0(t),\dots,f_n(t))
\end{equation}
defines an element in the Lie coalgebra  $\L_{\mathbb{C}\setminus S}^{\QHod}$ of framed variations of  mixed Hodge--Tate structures on $\mathbb{C}\setminus S$ for some finite set $S$. Indeed, \eqref{FormulaHodgeCorrelatorFunctions} is obtained as a pull-back of \eqref{FormulaHodgeCorrelator} from $\Conf_{m}(\mathbb{C})$ where $m$ is the number of distinct functions among $f_0,\dots,f_n$. The open subset $\mathbb{C}\setminus S\subseteq \mathbb{C}$ consists of those points $t\in \mathbb{C}$ at which the cardinality of the set $\{f_0(t),\dots, f_n(t)\}$ equals that of the set  $\{f_0,\dots, f_n\}$.

Specializations of framed variations of mixed Hodge-Tate structures are discussed in \cite[\S 4]{Mal20}. For a point $t_0\in S$ we have the specialization map
\[
\Sp_{t\to t_0}\colon \L_{\mathbb{C}\setminus S}^{\QHod} \longrightarrow \L^{\QHod}\,,
\]
such that $\Sp_{t\to t_0}(\cor^{\QHod}(t,t_0))$ is equal to zero.
The following statement follows from \cite[Theorem 28]{Mal20}: 
\begin{equation}\label{FromulaSpecializationMHCorrelators}
\Sp_{t \to t_0}\bigl(\cor^{\QHod}( f_0(t),\dots,f_n(t) )\bigr)=\cor^{\QHod}\bigl(\Sp_{t \to t_0} \llp f_0(t),\dots,f_n(t)\rrp \bigr).
\end{equation}

\begin{proof}[Proof of \autoref{LemmaHodgeRealization}] 
An element 
\[
\widetilde{R}=\sum_{i=1}^N n_i \llp f_0^i(t),\dots,f_n^i(t) \rrp \in \mathcal{A}_n(\mathbb{C}(t)) \,, \qquad n_i \in \mathbb{Q} \,.
\]
 defines an element  $\widetilde{R}^{\QHod}\in \L_{\mathbb{C}\setminus S}^{\QHod}$ for some finite set $S\subseteq \mathbb{C}$. 
 
 We  prove the following two statements by induction on $n$:
 \begin{enumerate}
\item for every $\widetilde{R}\in \mathcal{A}_n(\mathbb{C}(t))$ for $n\geq 2$ with $\delta(R)=0$ in $\bigwedge^2 \mathcal{L}^\mathrm{f}(\mathbb{C}(t))$, $\widetilde{R}^{\QHod}$ is a constant variation,
\item for any $R\in \mathcal{R}_n(\mathbb{C})$ we have $\cor^{\QHod}(R)=0$. 
 \end{enumerate}

For $n=1$ the first statement is void and the second statement is trivial since $\mathcal{R}_1(\mathbb{C})=0$. We assume that $n\geq 2.$ We have 
 \[
 \delta(\widetilde{R})\in \sum_{k=1}^{n-2} \mathcal{A}_k(\mathbb{C}(t))\wedge \mathcal{R}_{n-k}(\mathbb{C}(t))\subseteq \bigwedge\nolimits^2 \mathcal{A}(\mathbb{C}(t)).
 \]
 Next, we use the so-called rigidity argument (see \cite[\S8]{GonPeriods02}, \cite[Lemma 2.7]{GR-zeta4}, and \cite[\S 2.5]{Rud20}), which means that to show that the variation $\widetilde{R}^{\QHod}$ is constant it is sufficient to prove that $\delta \widetilde{R}^{\QHod}$ vanishes in $\bigwedge^2 \L_{\mathbb{C}\setminus S}^{\QHod}$. We have 
\[
\delta(\widetilde{R}^{\QHod})=\sum \widetilde{R}_{(1)}^{\QHod}\wedge \widetilde{R}_{(2)}^{\QHod}
\]
for some
$\widetilde{R}_{(1)}\in \mathcal{A}_k(\mathbb{C}(t))$ and $\widetilde{R}_{(2)}\in  \mathcal{R}_{n-k}(\mathbb{C}(t))$ with  $1\leq k \leq n-2$ (note that $\mathcal{R}_{1}(\mathbb{C}(t))=0$).
By \autoref{LemmaRCoideal}, each element $\widetilde{R}_{(2)}\in  \mathcal{R}_{n-k}(\mathbb{C}(t))\subseteq \mathcal{A}_{n-k}(\mathbb{C}(t))$
satisfies $\delta \widetilde{R}_{(2)}=0$ in $\bigwedge^2\mathcal{L}^\mathrm{f}(\mathbb{C}(t))$. By the induction hypothesis, 
$\widetilde{R}_{(2)}^\QHod$ is a constant variation. Moreover, its specialization at a regular point lies in $\cor^{\QHod}(\mathcal{R}_{n-k}(\mathbb{C}))$, so by inductive assumption in (ii) it is equal to zero. We conclude that  $\delta \widetilde{R}^{\QHod}=0$ in $\L_{\mathbb{C}\setminus S}^{\QHod}$.  This finishes the proof of inductive step for (i). 
 
For $R\in \mathcal{R}_n(\mathbb{C})$ there exists $\widetilde{R}\in \mathcal{A}_n(\mathbb{C}(t))$ with $\delta(\widetilde{R})=0$ in  $\bigwedge^2 \mathcal{L}^\mathrm{f}(\mathbb{C}(t))$ such that 
$R=\Sp_{t\to 0}(\widetilde{R})-\Sp_{t\to 1}(\widetilde{R})$. 
By (i), $\widetilde{R}^{\QHod}$ is a constant variation, so
\[
\Sp_{t\to 0}(\widetilde{R}^{\QHod})=\Sp_{t\to 1}(\widetilde{R}^{\QHod})\,.
\]
Using \eqref{FromulaSpecializationMHCorrelators}, we get that
\[
\cor^{\QHod}(\Sp_{t\to 0}\widetilde{R})=\cor^{\QHod}(\Sp_{t\to 1}\widetilde{R}),
\]
which proves the inductive step for (ii).
\end{proof}

\begin{remark}
In \cite{Gon19}, Goncharov defined the \emph{canonical real period map} 
$p_{\mathbb{R}}\colon \L_n^{\RHod} \lra \mathbb{R}$ and computed the \emph{Hodge correlator} functions 
\[
\cor_{\mathcal{H}}(x_0,\dots,x_n):=p_{\mathbb{R}}(\cor^{\QHod}(x_0,\dots,x_n))\,.
\]
\Autoref{LemmaHodgeRealization} implies that for $n\geq 1$ we have the canonical real period map $p_{\mathbb{R}}\colon \mathcal{L}^\mathrm{f}_{n}(\mathbb{C})\lra \mathbb{R}.$ Assuming that \autoref{ConjecturePolylogsKtheory} holds for $i=1$, we would have a map
\[
\textup{ch}_{n,1}\colon \gr_{\gamma}^n K_{2n-1} (\mathbb{C})\lra \mathcal{L}^\mathrm{f}_n(\mathbb{C}).
\]
Conjecturally, the composition $p_{\mathbb{R}}\circ \textup{ch}_{n,1}$ should be a non-zero rational multiple of the Borel regulator map.
\end{remark}

\subsection{Motivic realization} \label{SectionMotivicRealization}
Assume that $F$ is a number field. Consider the category $\mathrm{MTM}_{F}$ of mixed Tate motives over $F$ (\cite{Lev93}, \cite{Lev98}, \cite{DG05}, see also \cite{Cle24}). This category is equivalent to the category of finite-dimensional graded comodules over a Hopf algebra of framed mixed Tate motives $\mathcal{H}^{\mathcal{M}}(F)$. Let $\L^{\mathcal{M}}(F)$ be the corresponding Lie coalgebra of indecomposables.  For every embedding $\sigma \colon F\to \mathbb{C}$, we have a map ${r_\sigma}\colon\L^{\mathcal{M}}(F)\lra \L^{\QHod}$ called Hodge realization. 
The Hodge realization functor induces a map 
\begin{equation}\label{FormulaBeilinsonRegulator}
(r_{\sigma})_* \colon  \Ext^1_{\textup{MTM}_F}(\Q(0),\Q(n)) \lra \Ext^1_{\mathrm{MHS}_{\Q}}(\Q(0),\Q(n)).
\end{equation}
It is known that $\Ext^1_{\textup{MTM}_F}(\Q(0),\Q(n))$ is isomorphic to $K_{2n-1}(F)_{\Q}$ and $\Ext^1_{\mathrm{MHS}_{\Q}}(\Q(0),\Q(n))$ is isomorphic to $\mathbb{C}/(2\pi i)^n\Q.$ 

We will use the fact that the map 
\begin{equation}\label{FormulaRegulator}
\oplus(r_{\sigma})_* \colon  \Ext^1_{\textup{MTM}_F}(\Q(0),\Q(n)) \lra 
\bigoplus_{\sigma\colon F\to \mathbb{C}}\Ext^1_{\mathrm{MHS}_{\Q}}(\Q(0),\Q(n))
\end{equation}
is injective. This fact comes from the comparison of the regulators of Beilinson and Borel, proven in \cite{Bei84}, see also \cite{Rap88} and \cite{Bur02}. The precise statement is as follows. Let $r_1$ denote the number of real embeddings of $F$, and $r_2$ the number of pairs of complex embeddings. For $n\geq 2$ we define  
\[
d_n=
\begin{cases}
r_1 & \text{if $n$ is odd,} \\
r_1+r_2  & \text{if $n$ is even.}
\end{cases}
\]
We have the following identification induced by the real or imaginary part:
\begin{equation}\label{SpaceOfInvariants}
\bigg(\bigoplus_{\sigma\colon F \to \mathbb{C}}  \mathbb{C}/(2\pi i)^n\R \bigg)^+ \cong \R^{d_n}
\end{equation}
where the symbol $+$ denotes the space of invariants for complex conjugation acting on each $\mathbb{C}/(2\pi i)^n\R$ and on the set of embeddings of $\sigma \colon F\to \mathbb{C}$. The image of the composition
\[
 \Ext^1_{\textup{MTM}_F}(\Q(0),\Q(n)) 
\lra  \bigoplus_{\sigma\colon F\to \mathbb{C}}  \Ext^1_{\mathrm{MHS}_{\Q}}(\Q(0),\Q(n))\cong \bigoplus_{\sigma\colon F \to \mathbb{C}}  \mathbb{C}/(2\pi i)^n\Q \lra \bigoplus_{\sigma\colon F \to \mathbb{C}}  \mathbb{C}/(2\pi i)^n\R 
\]
lies inside \eqref{SpaceOfInvariants}, so we obtain a map
\[
K_{2n-1}(F)_{\Q}\cong  \Ext^1_{\textup{MTM}_F}(\Q(0),\Q(n)) \lra \bigoplus_{\sigma\colon F\to \mathbb{C}}  \Ext^1_{\mathrm{MHS}_{\Q}}(\Q(0),\Q(n)) \lra  \R^{d_n},
\]
 which is known  be a non-zero rational multiple of the Borel regulator, and thus is injective by the result of Borel \cite{Bor77}. This implies 
that the map \eqref{FormulaRegulator} is injective.

In \cite{Gon19} (see also \cite[\S 2.4]{Gon19B}), Goncharov constructed elements in  $\L^{\mathcal{M}}(F)$ called \emph{motivic correlators}
\[
\cor^{\mathcal{M}}(x_0,\dots,x_n)\in \L_n^{\mathcal{M}}(F)
\]
for $x_0,\dots,x_n \in F.$ For every embedding $\sigma\colon F\lra \mathbb{C}$ we have
\[
r_{\sigma}(\cor^{\mathcal{M}}(x_0,\dots,x_n))=\cor^{\QHod}(\sigma(x_0),\dots,\sigma(x_n)).
\]
Similarly to the Hodge case,  we have a morphism of Lie coalgebras
\[
\cor^{\M}\colon \mathcal{A}(F)\longrightarrow \L^{\M}(F)\,
\]
sending $\llp x_0,\dots,x_n \rrp$ to $\cor^{\mathcal{M}}(x_0,\dots,x_n)\in \L_n^{\mathcal{M}}(F).$

\begin{proposition}\label{LemmaMotivicRealization} Let $F$ be a number field. For any $R\in \mathcal{R}_n(F)$ we have $\cor^{\M}(R)=0$. So, we have a well-defined map 
\[
r_{\M}\colon \mathcal{L}^\mathrm{f}_n(F)\longrightarrow \L_n^{\M}(F)
\]
called the motivic realization.
\end{proposition}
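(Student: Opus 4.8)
The plan is to mirror the inductive structure of the Hodge realization (\autoref{LemmaHodgeRealization}), but to replace the geometric rigidity argument with the injectivity of the Beilinson--Borel regulator \eqref{FormulaRegulator}. I will prove, by induction on $n$, that $\cor^{\M}$ annihilates $\mathcal{R}_m(F)$ for every $m \le n$ and every number field $F$. The base case $n=1$ is trivial since $\mathcal{R}_1(F)=0$, so fix $n\ge2$ and an element $R = \Sp_{t\to 0}(\widetilde R) - \Sp_{t\to 1}(\widetilde R) \in \mathcal{R}_n(F)$ arising from some $\widetilde R \in \mathcal{A}_n(F(t))$ with $\delta(\widetilde R)=0$ in $\bigwedge^2\mathcal{L}^\mathrm{f}(F(t))$.

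First I claim that the element $\cor^{\M}(R)\in\L^{\M}_n(F)$ is annihilated by the cobracket. Since $\cor^{\M}\colon\mathcal{A}(F)\to\L^{\M}(F)$ is a morphism of Lie coalgebras, we have $\delta\cor^{\M}(R) = (\cor^{\M}\wedge\cor^{\M})(\delta R)$. By the computation in the proof of \autoref{LemmaRCoideal}, $\delta R$ lies in $\sum_{k=1}^{n-1}\mathcal{A}_k(F)\wedge\mathcal{R}_{n-k}(F)$; applying $\cor^{\M}$ to the second tensor factor and invoking the inductive hypothesis (valid because $n-k<n$) kills every term, so $\cor^{\M}(R)\in\ker(\delta)_n = H^1(\L^{\M}(F))_n$. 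Under the standard Tannakian identification this kernel is $\Ext^1_{\mathrm{MTM}_F}(\Q(0),\Q(n)) \cong K_{2n-1}(F)_{\Q}$.

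Next I compute the Hodge realizations of $\cor^{\M}(R)$. Given an embedding $\sigma\colon F\to\mathbb{C}$, extend it to $F(t)\to\mathbb{C}(t)$ by $\sigma(t)=t$; since field embeddings respect the relations \refAall\ and commute with specialization, $\sigma(\widetilde R)\in\mathcal{A}_n(\mathbb{C}(t))$ still satisfies $\delta(\sigma\widetilde R)=0$ in $\bigwedge^2\mathcal{L}^\mathrm{f}(\mathbb{C}(t))$, and $\sigma(R)=\Sp_{t\to0}(\sigma\widetilde R)-\Sp_{t\to1}(\sigma\widetilde R)\in\mathcal{R}_n(\mathbb{C})$. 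Because $r_\sigma\circ\cor^{\M}=\cor^{\QHod}\circ\sigma$ on $\mathcal{A}(F)$, we obtain $r_\sigma(\cor^{\M}(R))=\cor^{\QHod}(\sigma(R))=0$ by \autoref{LemmaHodgeRealization}. As $r_\sigma$ is a morphism of Lie coalgebras it carries $\ker(\delta)_n$ into $\ker(\delta)_n\subseteq\L^{\QHod}_n=\Ext^1_{\mathrm{MHS}_{\Q}}(\Q(0),\Q(n))$, and this restricted map is precisely the regulator $(r_\sigma)_*$ of \eqref{FormulaBeilinsonRegulator}.

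Assembling these statements over all embeddings, the primitive element $\cor^{\M}(R)\in\Ext^1_{\mathrm{MTM}_F}(\Q(0),\Q(n))$ lies in the kernel of $\oplus(r_\sigma)_*$, which is injective by \eqref{FormulaRegulator}; hence $\cor^{\M}(R)=0$, completing the induction. The well-definedness of $r_{\M}$ on the quotient $\mathcal{L}^\mathrm{f}_n(F)=\mathcal{A}_n(F)/\mathcal{R}_n(F)$ is then immediate. I expect the main subtlety to be the identification of the restriction of the Lie-coalgebra realization $r_\sigma$ to $\ker(\delta)_n$ with the $\Ext$-level Beilinson regulator $(r_\sigma)_*$; this compatibility, together with the fact that the injectivity input \eqref{FormulaRegulator} only constrains the primitive part, is exactly what allows the one-variable rigidity already encoded in the Hodge realization to be upgraded to a vanishing statement over the number field $F$.
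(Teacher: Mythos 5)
Your proposal is correct and follows essentially the same route as the paper: induction on $n$, using the coideal property of $\mathcal{R}(F)$ together with the inductive hypothesis to show that $\cor^{\M}(R)$ is primitive and hence lies in $\Ext^1_{\mathrm{MTM}_F}(\Q(0),\Q(n))$, then killing all its Hodge realizations via \autoref{LemmaHodgeRealization} and concluding by the injectivity of \eqref{FormulaRegulator}. The only cosmetic difference is that the paper verifies $\delta(R_a)=0$ for each specialization $R_a=\cor^{\M}(\Sp_{t\to a}R)$ separately before taking the difference, whereas you apply the coideal computation directly to $R\in\mathcal{R}_n(F)$; these are equivalent.
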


\begin{proof} We argue by induction on $n$. For $n=1$ both $\L_1^\mathrm{f}(F)$ and  $\L_1^{\mathcal{M}}(F)$ are known to be isomorphic to $F^{\times}_{\Q}$ and $\cor(x_0,x_1)=\cor^{\mathcal{M}}(x_0,x_1)=(x_0-x_1)\in F^{\times}_\mathbb{Q}$.
For $n\geq 2$, consider an element 
\[
R=\sum_i n_i\llp f_0^i(t),\dots,f_n^i(t)\rrp\in \mathcal{A}_n(F(t)) \,, \qquad n_i \in \mathbb{Q} \,,
\] 
with $\delta(R)\in \sum_{k=1}^{n-2} \mathcal{A}_k(F(t))\wedge\mathcal{R}_{n-k}(F(t))$.  For $a\in F$, we put 
\[
R_a=\cor^{\mathcal{M}}(\Sp_{t\to a}(R))\in \L^{\mathcal{M}}_n(F)
\]
We need to show that $R_0=R_1$.
By \autoref{LemmaSpecializationCoproduct} and \autoref{LemmaSpecializationRelations}, for every $a\in F$ we have 
\[
\delta\Big(\sum n_i\Sp_{t\to a}\cor(f_0^i(t),\dots,f_n^i(t))\Big)\in \sum_{j=1}^{n-1}\mathcal{A}_j(F)\wedge \mathcal{R}_{n-j}(F)\,,
\]
so, by the induction hypothesis, $\delta(R_a)=0$ and thus  $\delta(R_0-R_1)=0$. Thus 
\[
R_0-R_1\in \Ext^1_{\mathrm{MTM}_F}(\Q(0),\Q(n))\,.
\]

Consider an embedding $\sigma\colon F\to \mathbb{C}$.  Then, by \autoref{LemmaHodgeRealization}, the element $r_{\sigma}(R_0-R_1)=0$. It remains to notice that the map \eqref{FormulaRegulator}
is injective. It follows that $R_0=R_1$.
\end{proof}

\bibliographystyle{halpha2}      
\bibliography{bibliography} 
\end{document}